\title[Periodic Data Rigidity]{Dominated splitting from constant periodic data and Global rigidity of Anosov automorphisms}
\author{Jonathan DeWitt}
\address[DeWitt]{Department of Mathematics, The University of Maryland, College Park, MD, USA, 20742}
\email{dewitt@umd.edu}
\author{Andrey Gogolev}
\address[Gogolev]{Department of Mathematics, The Ohio State University, Columbus, OH, USA, 43210}
\email{gogolyev.1@osu.edu}
\date{\today}
\theoremstyle{plain}
\newtheorem{theorem}{Theorem}[section]
\newtheorem{thmy}{Theorem}
\newtheorem{proposition}[theorem]{Proposition}
\newtheorem{lemma}[theorem]{Lemma}
\newtheorem{corollary}[theorem]{Corollary}
\newtheorem{quest}[theorem]{Question}
\newtheorem{claim}[theorem]{Claim}
\newtheorem{definition}[theorem]{Definition}
\theoremstyle{remark}
\newtheorem{remark}[theorem]{Remark}
\newtheorem{observation}[theorem]{Observation}
\def\eps{\varepsilon}
\def\Diff{\operatorname{Diff} }
\def\title{\em}
\def\cU{\mathcal{U}}
\def\cA{\mathcal{A}}
\renewcommand{\epsilon}{\varepsilon}
\newcommand{\SL}{\operatorname{SL}}
\newcommand{\GL}{\operatorname{GL}}
\newcommand{\abs}[1]{\left| #1 \right|}
\newcommand{\mc}[1]{\mathcal{ #1 }}
\newcommand{\wt}[1]{\widetilde{ #1 }}
\def\transverse{\,\raise2pt\hbox to1em{\hfil$\top$\hfil}\hskip -1em \hbox
to1em{\hfil$\cap$\hfil}\,} 
\newcommand\Z{\mathbb{Z}}
\newcommand\R{\mathbb R}
\newcommand\TT{{\mathbb T}}
\newcommand\N{{\mathbb N}}
\newlength{\figboxwidth} \setlength{\figboxwidth}{5.8in}
\begin{document}

\begin{abstract} We show that a $\GL(d,\R)$ cocycle over a hyperbolic system with constant periodic data has a dominated splitting whenever the periodic data indicates it should.
This implies global periodic data rigidity of generic Anosov automorphisms of $\mathbb{T}^d$. Further, our approach also works when the periodic data is narrow, that is, sufficiently close to constant. We can show global periodic data rigidity for certain non-linear Anosov diffeomorphisms in a neighborhood of an irreducible Anosov automorphism with simple spectrum.
\end{abstract}

\maketitle

\section{Introduction}

Anosov diffeomorphisms are a well-studied class of examples in the field of dynamical systems. By definition an Anosov diffeomorphism is a diffeomorphism of a Riemannian manifold $M$ such that $TM$ has a continuous splitting into a complementary pair of $Df$-invariant bundles $E^s$ and $E^u$, the vectors of which are uniformly contracted or expanded, respectively, by $Df$. We refer to $E^s$ and $E^u$ as the stable and unstable bundles of $f$, respectively. There is a simple algebraic construction of Anosov diffeomorphisms: take the action of a matrix $L\in \GL(d,\Z)$ that has no eigenvalues of modulus $1$ on the torus $\R^d/\Z^d$. Diffeomorphisms thus obtained are called \emph{hyperbolic toral automorphisms} or \emph{Anosov automorphisms}. 

Anosov automorphisms have strong rigidity properties. One of the most basic, due to Franks and Manning, is that if $f$ is an Anosov diffeomorphism that is in the same free homotopy class as an Anosov automorphism $L$, then there is a homeomorphism $h$ such that $hfh^{-1}=L$, see e.g.~\cite[Sec.~2.3]{katok1995introduction}. We call $h$ a \emph{conjugacy}.  
If $h$ were a $C^1$ diffeomorphism, then it would imply that for every $f$-periodic point $p$ of period $n$ the derivative $D_pf^n$ is conjugate as a linear map to $L^n$ with the conjugacy given by $D_ph$. 
Consequently, if there is a periodic point where $D_pf^n$ is not conjugate to $L^n$, then any conjugacy $h$ cannot be a $C^1$ diffeomorphism. Hence we may view each periodic point $p$ as coming with a possible obstruction to the differentiability of $h$. If all of these obstructions vanish, then we say that $f$ has the same \emph{periodic data} as $L$. We say that $L$ is $C^{\infty}$ \emph{periodic data rigid} if for any $C^{\infty}$ Anosov diffeomorphism $f$ in the same homotopy class as $L$, if $f$ has the same periodic data as $L$, then $f$ is $C^{\infty}$ conjugate to $L$.  One of our main results is the following theorem.

\begin{theorem}\label{thm:rigidity_dim_3}
Suppose that $L$ is an Anosov automorphism of $\mathbb{T}^3$. Then $L$ is $C^{\infty}$ periodic data rigid.
\end{theorem}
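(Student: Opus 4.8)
The plan is to combine the Franks--Manning conjugacy with the dominated splitting produced by our main theorem, and then to upgrade the conjugacy to $C^\infty$ along a spanning pair of foliations and invoke Journ\'e's lemma. Let $h$ be the homeomorphism with $hfh^{-1}=L$. Replacing $f$ and $L$ by $f^{-1}$ and $L^{-1}$ if necessary, we may assume $\dim E^s=1$ and $\dim E^u=2$, since on $\mathbb{T}^3$ the stable and unstable dimensions are $1$ and $2$ in some order. A short arithmetic argument shows that the two eigenvalues of $L|_{E^u}$ are either real with distinct moduli, or a complex-conjugate pair: if $L$ had a repeated real eigenvalue $\mu$, or a pair $\mu,-\mu$, then $\mu$ (resp. $\mu^2$) would be a rational, hence integral, algebraic number, and $\det L=\pm1$ would then force some eigenvalue to have modulus $1$, contradicting hyperbolicity. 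So there are exactly two cases.

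\emph{The real case.} Here $L|_{E^u}$ has real eigenvalues $\lambda_2,\lambda_3$ with $1<|\lambda_2|<|\lambda_3|$, and $E^s$ carries an eigenvalue $\lambda_1$ with $|\lambda_1|<1$. The $\GL(2,\R)$ cocycle $Df|_{E^u}$ over $(M,f)$ has constant periodic data (powers of $L|_{E^u}$) whose eigenvalues have distinct moduli, so our main theorem gives a $Df$-invariant continuous dominated splitting $E^u=E^2\oplus E^3$ into line bundles. With $E^1:=E^s$ this yields an invariant flag $E^1\oplus E^2\oplus E^3=TM$ with constant periodic eigenvalue $\lambda_i$ on $E^i$. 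The stable foliation $W^s=W^1$ is one dimensional; since $\log\|Df|_{E^1}\|$ has constant periodic data $\log|\lambda_1|$ and $E^1$ is smooth along $W^1$, a Liv\v{s}ic transfer function argument shows that $f$ restricted to each leaf of $W^1$ is $C^\infty$-conjugate to multiplication by $\lambda_1$, whence $h$ is $C^\infty$ along $W^1$. Likewise the strong unstable foliation $W^{uu}=W^3$ tangent to the fast bundle $E^3$ is one dimensional and uniquely integrable inside $W^u$, and the same argument makes $h$ $C^\infty$ along $W^3$. To get $h$ smooth along the full two-dimensional unstable foliation $W^u$ one linearizes $f$ along its leaves: using domination together with the constant periodic data of $L|_{E^u}$, the restriction $f|_{W^u(x)}$ is $C^\infty$-conjugate along leaves to the linear map $L|_{E^u}$, which identifies $h|_{W^u}$ with a smooth map. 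Finally $W^1$ and $W^u$ are complementary with uniformly $C^\infty$ leaves, so Journ\'e's lemma gives $h\in C^\infty$.

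\emph{The complex case.} Here $L|_{E^u}$ is conjugate to a rotation--scaling $\rho R_\theta$ with $\rho>1$, and the periodic data of $Df|_{E^u}$ is conformal: $D_pf^n|_{E^u}$ is conjugate to $\rho^nR_{n\theta}$. By the conformality criterion for cocycles over hyperbolic systems with conformal periodic data (Kalinin--Sadovskaya), $Df|_{E^u}$ preserves a continuous conformal structure on $E^u$ which is automatically smooth along the unstable leaves. Non-stationary linearization then shows $f|_{W^u(x)}$ is $C^\infty$-conjugate along leaves to the conformal linear map $L|_{E^u}$, so $h$ is $C^\infty$ along $W^u$; the Liv\v{s}ic argument above again makes $h$ $C^\infty$ along $W^s$; and Journ\'e's lemma finishes the proof.

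The main obstacle is the real case, and specifically the passage from the flag of invariant line bundles to smoothness of $h$ along the two-dimensional unstable foliation. The slow sub-bundle $E^2\subset E^u$ need not be a priori uniquely integrable, and the eigenvalues $\lambda_2,\lambda_3$ of $L|_{E^u}$ may be resonant, so one cannot simply quote a non-stationary linearization theorem. The dominated splitting furnished by our main theorem is exactly what makes the linearization along unstable leaves go through, either by extracting unique integrability of $E^2$ from the constant periodic data, or by running a de la Llave-type argument inside the now dynamically coherent center-stable and center-unstable foliations and then bootstrapping the resulting $C^1$ conjugacy to $C^\infty$.
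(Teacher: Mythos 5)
Your case division (real eigenvalues of distinct moduli versus a complex pair on the two\-/dimensional unstable bundle) and your use of the dominated splitting from the main theorem in the real case match the paper's strategy. The gap is in how you get from there to a $C^\infty$ conjugacy. You assert that ``using domination together with the constant periodic data of $L|_{E^u}$, the restriction $f|_{W^u(x)}$ is $C^\infty$-conjugate along leaves to the linear map $L|_{E^u}$.'' No such non-stationary linearization is available for a two-dimensional, non-conformal unstable foliation with possibly resonant exponents; this is precisely why the existing machinery (Theorem \ref{thm:nonlinear_rigidity_3}, resting on \cite[Thm.~2]{gogolev2008differentiable}) delivers only a $C^{1+\text{H\"older}}$ conjugacy. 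You flag this as ``the main obstacle'' and then offer two unverified escape routes; the second one (``bootstrapping the resulting $C^1$ conjugacy to $C^\infty$'') is itself a deep theorem, not a routine step, and the known bootstrap results are \emph{perturbative}: they require $f$ to be $C^1$ (or $C^k$) close to $L$.

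The paper's proof resolves this in a different, two-stage way that your proposal is missing. First it establishes only $C^{1+\text{H\"older}}$ regularity of $h$ (via Theorem \ref{thm:nonlinear_rigidity_3} in the real case, and via conformality of the periodic data plus \cite[Prop.~2.3]{gogolev2011local} and Journ\'e in the complex case). Second, it invokes Observation \ref{obs:c1_conj}: once $h$ is $C^1$, one conjugates $f$ by a smooth diffeomorphism $C^1$-close to $h$, producing a smooth Anosov diffeomorphism that is $C^1$-close to $L$ and $C^\infty$-conjugate to $f$; this converts the global problem into the local one, to which the bootstrap theorems of \cite{gogolev2017bootstrap} (real case, using Diophantine properties of the linear foliations) and \cite[Cor.~2.5]{kalinin2009anosov} (conformal case) apply. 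Without this reduction-to-local step and an actual citation of a bootstrap mechanism, your argument does not reach $C^\infty$; as written, the real case of your proof stops, at best, at $C^{1+\text{H\"older}}$ along the stable and strong unstable directions, with the regularity along the slow unstable direction and the upgrade to $C^\infty$ both unaccounted for.
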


\noindent In the case where an  automorphism $L$ is conformal on its stable and unstable bundles, this result is due to Kalinin and Sadovskaya \cite{kalinin2009anosov}.

We remark that Theorem \ref{thm:rigidity_dim_3} is not perturbative: we do not consider only those $f$ that are $C^1$ close to $L$. That said, this result builds upon and relies heavily on the large amount of work  for the local, perturbative version of the problem, which asks whether any perturbation of an Anosov automorphism that has the same periodic data is $C^1$ conjugate back to the original automorphism. The main obstacle to proving global versions of existing local results is the construction of a particular type of $Df$-invariant splitting of the unstable bundle.

The main contribution of this article to the periodic data rigidity program is overcoming the problems that arise when not working perturbatively. We explain what the issue is in the simplest setting where it occurs, namely, in dimension 3. Let $L\colon\TT^3\to\TT^3$ be an Anosov automorphism with real eigenvalues of distinct absolute values. If one has an Anosov diffeomorphism on $\mathbb{T}^3$ with a $2$-dimensional unstable bundle $E^u$ homotopic to $L$, then, a priori, there is no reason to expect that $E^u$ has a splitting into two $Df$-invariant subbundles. However, note that the unstable bundle of $L$ does split into two subbundles corresponding to the two unstable eigenvalues. If $f$ is an Anosov diffeomorphism with the same periodic data as $L$, then the main remaining difficulty for establishing periodic data rigidity is showing that the unstable bundle of $f$ has a splitting with the same properties as the splitting of the unstable bundle of $L$.

In this paper, we show how to construct a $Df$-invariant splitting of $E^u$ from just the periodic data of $Df$.
The type of splitting of $E^u$ that we seek is called a \emph{dominated splitting}, which we now define. Suppose that $\sigma\colon X\to X$ is a homeomorphism of a compact metric space. If $A\colon X\to \GL(d,\R)$ is a continuous map, then we may consider the products of the matrices $A(x)$ along a trajectory of $\sigma$. We write $A^n(x)=A(\sigma^{n-1}(x))\cdots A(x)$. We say that a continuous splitting $E\oplus F$ of the bundle $X\times \R^d$ is $A$-invariant if $A(x)E(x)=E(\sigma(x))$ and $A(x)F(x)=F(\sigma(x))$. A continuous splitting $E\oplus F$ is called a \emph{dominated splitting} if this splitting is $A$-invariant and there exist constants $C>0$ and $0<\tau<1$ such that for every $x\in X$ and $n\in \N$, 
\[
\|A^n(x)\vert F_x\|<C\tau^nm(A^n(x)\vert E_x),
\]
where $m(A)=\|A^{-1}\|^{-1}$ denotes the conorm of the matrix $A$.  We say that the dominated splitting {\it has index $k$} if $\dim E=k$.
See \cite{bochi2009some} for a useful discussion of this and related notions. 

A closely related notion is that of uniform hyperbolicity. We say that a linear cocycle $A\colon X\to \GL(d,\R)$ over a homeomorphism $(X,\sigma)$ as above is \emph{uniformly hyperbolic} if there exists a non-trivial continuous $A$-invariant splitting $E^s\oplus E^u$, a continuous metric $\|\cdot\|_{x}$ on the vector bundle $X\times \R^d$, and $\lambda>1$ such that for each $x\in X$,
\begin{equation}
\|A(x)\vert_{E^s}\|< \lambda^{-1}<1<\lambda< m(A(x)\vert_{E^u}).
\end{equation}
In words, vectors in $E^s$ are uniformly contracted and those in $E^u$ are uniformly expanded. Formally, an \emph{Anosov diffeomorphism} $f\colon M\to M$ is a diffeomorphism of a Riemannian manifold for which the cocycle $Df\colon TM\to TM$ is uniformly hyperbolic. One defines cocycles and dominated splittings completely analogously if the bundle is not a trivial bundle.

As outlined above, the obstacle to making progress in the global rigidity problem for Anosov automorphisms is using the periodic data to construct a dominated splitting. In the local problem, such a splitting always exists for perturbative reasons, see, for example, \cite[Thm.~2.3]{dewitt2021local}. 

Our main technical result is the following, which produces such a splitting. We work in the more abstract setting of cocycles over subshifts of finite type because the arguments in this context are more transparent. 
We say that a cocycle $A\colon \Sigma\to \GL(d,\R)$ over a subshift of finite type 
 $\Sigma$ has \emph{constant} periodic data if there is a list of numbers $\lambda_1\ge \cdots\ge \lambda_d$ such that for each periodic point $p$ of period $n$ the moduli of the eigenvalues of $A^n(p)$ are $e^{n\lambda_1},\ldots,e^{n\lambda_d}$. For the formal definition of constant periodic data, see Definition \ref{defn:constant_periodic_data}. Our main theorem is the following.
\begin{theorem}\label{thm:dominated_splitting_constant_data}
Suppose that $\Sigma$ is a transitive, invertible, subshift of finite type and that $A\colon \Sigma\to \GL(d,\R)$ is a H\"older continuous cocycle with constant periodic data associated to exponents $\lambda_1\ge \cdots \ge \lambda_d$. If $\lambda_k>\lambda_{k+1}$, then $A$ has a dominated splitting of index $k$.
\end{theorem}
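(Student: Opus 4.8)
The plan is to reduce to the index-one case, reinterpret domination as a uniform exponential gap between the top two singular values, and then extract that gap from the constancy of the periodic data by an argument in the spirit of Cao's theorem (nonvanishing exponents force uniform hyperbolicity) powered by the closing lemma.

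\emph{Reduction to index one.} First I would pass to the $k$-th exterior power cocycle $\Lambda^k A\colon\Sigma\to\GL\big(\binom dk,\R\big)$. Its periodic data is again constant, the exponents being the sums $\lambda_{i_1}+\dots+\lambda_{i_k}$ over $k$-element subsets $\{i_1<\dots<i_k\}$, and the hypothesis $\lambda_k>\lambda_{k+1}$ says precisely that the largest such sum $\lambda_1+\dots+\lambda_k$ strictly dominates all the others, i.e.\ that $\Lambda^k A$ has a \emph{simple} top exponent. Since $A$ admits a dominated splitting of index $k$ if and only if $\Lambda^k A$ admits one of index $1$ (see \cite{bochi2009some}), it suffices to treat a H\"older cocycle $B\colon\Sigma\to\GL(m,\R)$ with constant periodic data whose leading exponent $\mu_1$ strictly exceeds the second exponent $\mu_2$, and to produce a dominated splitting of index $1$. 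By the singular value characterization of domination (again \cite{bochi2009some}), this amounts to showing that there exist $N\in\N$ and $\theta>1$ such that for every $x\in\Sigma$ the ratio of the first two singular values of $B^N(x)$ is at least $\theta^N$.

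\emph{Input from ergodic theory.} Kalinin's periodic approximation of Lyapunov exponents, together with the constancy of the periodic data, shows that \emph{every} $\sigma$-invariant ergodic measure $\nu$ on $\Sigma$ has Lyapunov spectrum exactly $\mu_1>\mu_2\ge\dots\ge\mu_m$; in particular each such $\nu$ has a one-dimensional top Oseledets space and $\lambda_1(\nu)-\lambda_2(\nu)=\mu_1-\mu_2>0$, uniformly in $\nu$. The problem is thus to upgrade this pointwise-in-$\nu$ separation of exponents into the uniform-in-$x$ exponential separation of singular values demanded by domination.

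\emph{The crux.} I expect this upgrade to be the main obstacle, and I would carry it out by contradiction. If no dominated splitting of index $1$ exists, then compactness of $\Sigma$ and the Anosov closing lemma for the subshift produce arbitrarily long orbit segments, and after shadowing genuine periodic orbits $p_j$ of periods $r_j$, along which the gap between the first two singular values of $B^{r_j}(p_j)$ is subexponential; H\"older continuity of $B$ transfers estimates between exponentially shadowing orbits with only bounded multiplicative distortion, so the periodic orbits genuinely inherit a subexponential gap. The delicate point is that the eigenvalue moduli of a single $B^{r}(p)$ control nothing about its singular values — its expanding and sub-expanding directions may be nearly parallel — so to feed the constant periodic data into the picture one must iterate around each $p_j$ many times, where the singular values of $B^{r_j}(p_j)^{\ell}$ do approach the eigenvalue moduli of $B^{r_j}(p_j)$, all while keeping uniform control of the $o(r_j)$ terms, the distortion constants, and the choice of the number of turns. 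Carried out carefully, the weak-$*$ limit of the corresponding orbit measures then yields an invariant measure $\nu$ with $\lambda_1(\nu)=\lambda_2(\nu)$, contradicting constancy. The true content here is a uniform statement — essentially: if $\lambda_k(\nu)>\lambda_{k+1}(\nu)$ for every invariant $\nu$ with a uniform gap, then the $k$-th and $(k{+}1)$-st singular values of $B^n$ separate exponentially uniformly in the base point — and establishing this is the heart of the argument. The same scheme is robust under replacing ``constant'' by ``narrow,'' which is presumably why it extends.

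\emph{Conclusion.} With the uniform singular-value gap in hand, \cite{bochi2009some} provides a continuous $\Lambda^k A$-invariant splitting of index $1$ satisfying the domination inequality; decoding the exterior power (the line field is automatically decomposable) converts this into the desired continuous $A$-invariant dominated splitting of $\Sigma\times\R^d$ with $\dim E=k$.
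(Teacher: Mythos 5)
Your reduction to index one via $\Lambda^kA$ and your use of the Bochi--Gourmelon singular value criterion are both fine and consistent with the paper's strategy. But the step you yourself flag as ``the heart of the argument'' is where the proposal breaks, in two places. First, the claim that H\"older continuity ``transfers estimates between exponentially shadowing orbits with only bounded multiplicative distortion'' is exactly the fiber-bunching hypothesis that this theorem is designed to avoid: for a general $\GL(d,\R)$ cocycle the products along two exponentially shadowing orbits are \emph{not} multiplicatively comparable, because near the end of the shadowing window the one-step errors are $O(1)$ and get amplified by the norm growth of the cocycle. Second, and more fundamentally, the ``true content'' you isolate --- that a uniform gap $\lambda_k(\nu)>\lambda_{k+1}(\nu)$ for every invariant measure forces uniform exponential separation of $\sigma_k$ and $\sigma_{k+1}$ --- is false as a general principle. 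Unlike Cao's theorem, the ratio $\sigma_{k+1}(A^n)/\sigma_k(A^n)$ is not submultiplicative, $\lambda_{k+1}$ is not semicontinuous in the direction your weak-$*$ limit needs, and the paper itself cites counterexamples: Velozo Ruiz's non-fiber-bunched $\SL(2,\R)$ cocycles with a uniform eigenvalue gap at every periodic point but no dominated splitting, and the non-Anosov diffeomorphisms of \cite{gogolev2010diffeomorphisms} all of whose invariant measures are uniformly hyperbolic. ``Iterating around $p_j$ many times'' recovers the eigenvalue gap of $B^{r_j}(p_j)^{\ell}$ but says nothing about the singular value gap along the original length-$r_j$ segment, which is what the contradiction requires; so the scheme does not close.

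What the paper does instead is quantitative and uses \emph{constancy} (not just gappedness) of the data in an essential way. Kalinin's theorem gives the pointwise upper bound $\|A^{i}(x)\|\le C_\epsilon e^{i(\lambda_1+\epsilon)}$ everywhere; writing $A^{n}(p)=A^{n-i}(\sigma^i p)A^{i}(p)$ and dividing the full-period lower bound $e^{n\lambda_1}$ (from constant data) by Kalinin's bound on the complementary piece yields a \emph{lower} bound on $\|A^{i}(p)\|$ for partial periods $i\approx\gamma n$; running this through exterior powers gives two-sided bounds on every $\sigma_j(A^{i}(p))$ at periodic points. Then, for an arbitrary $\omega$, one shadows for time $n$ but compares the matrix products only for time $\lfloor\gamma n\rfloor$ with $\gamma$ small: expanding $\prod(A(\sigma^ip^n)+E_{i,n})$, every error term satisfies $\|E_{i,n}\|\le Ce^{-\beta(1-\gamma)n}$, so the total \emph{additive} discrepancy is $\lesssim e^{\mu\gamma n}e^{-\beta(1-\gamma)n}$, which is exponentially small for $\gamma$ small, and Stewart's additive perturbation bound for singular values transfers the periodic-point estimates to $\omega$ provided $\gamma$ is also chosen so that this error is beaten by $e^{\gamma n(\lambda_k-\epsilon)}$. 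This interplay between the shadowing window $n$ and the much shorter comparison window $\gamma n$, together with the partial-period lower bound extracted from constancy, is the missing mechanism; a measure-theoretic limit argument cannot substitute for it.
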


 In fact, we are able to prove an even stronger result, which allows us to produce dominated splittings even when the periodic data is not constant, but instead is ``narrow," in the sense that it lies sufficiently close to constant periodic data. We formally define $\delta$-narrow periodic data in Definition \ref{defn:narrow_periodic_data}, but we remark here that small perturbations of Anosov automorphisms have narrow periodic data.

 \begin{theorem}\label{thm:narrow_data}
    Suppose that $(\Sigma,\sigma)$ is a transitive, invertible, subshift of finite type and let $\lambda_1\ge \lambda_2\ge \cdots \ge \lambda_d$ with  $\lambda_k>\lambda_{k+1}$.  For each $\beta\in (0,1)$ there exists $\delta>0$ such that if $A$ is a $\beta$-H\"older $\GL(d,\R)$ cocycle with $\delta$-narrow periodic data centered at $(\lambda_1,\ldots,\lambda_d)$ then $A$ has a dominated splitting of index $k$.
\end{theorem}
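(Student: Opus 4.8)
The plan is to run the argument for Theorem~\ref{thm:dominated_splitting_constant_data} with $\delta$ as a small parameter, replacing every use of the exact constancy of the periodic data by an estimate that degrades by $O(\delta)$; the order of quantifiers ($\beta$ first, $\delta$ afterward) is exactly what one needs to absorb these errors. The first step is a reduction: it suffices to produce a \emph{continuous} $A$-invariant splitting $E\oplus F$ of $\Sigma\times\R^d$ with $\dim E=k$. Granting such a splitting, restrict $A$ to $E$ and to $F$. Because the periodic data is $\delta$-narrow, the approximation of Lyapunov exponents by periodic data forces every ergodic $\sigma$-invariant measure $\mu$ to have its $i$-th Lyapunov exponent within $O(\delta)$ of $\lambda_i$; hence every Lyapunov exponent along $E$ exceeds $\lambda_k-O(\delta)$ and every one along $F$ lies below $\lambda_{k+1}+O(\delta)$. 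Applying the subadditive variational principle to the subadditive sequences $\log\|A^n|_F\|$ and $-\log m(A^n|_E)$ yields, uniformly in $x\in\Sigma$ and for all large $n$, the bounds $\tfrac1n\log\|A^n(x)|_{F_x}\|\le\lambda_{k+1}+O(\delta)$ and $\tfrac1n\log m(A^n(x)|_{E_x})\ge\lambda_k-O(\delta)$. Since $\lambda_k>\lambda_{k+1}$, choosing $\delta$ small enough that $\lambda_k-\lambda_{k+1}>O(\delta)$ makes $E\oplus F$ a dominated splitting of index $k$ (alternatively, invoke the singular value characterization of domination, see \cite{bochi2009some}).

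To build the splitting, I would first build it on periodic orbits and then extend by density. For a periodic point $p$ of period $n$, $\delta$-narrowness gives a gap of size at least $n(\lambda_k-\lambda_{k+1}-2\delta)>0$ between the $k$-th and $(k+1)$-st eigenvalue moduli of $A^n(p)$, so $A^n(p)$ has a canonical invariant splitting into the sum $E(p)$ of its top $k$ generalized eigenspaces and the sum $F(p)$ of the remaining ones; spreading these along the orbit by $A$ makes $p\mapsto(E(p),F(p))$ an $A$-invariant assignment over the dense set of periodic points. The useful feature is that $E(p)$ is recovered dynamically as $\lim_{m\to\infty}(A^n(p))^m\cdot V$ in $\Gr(k,d)$ for any $V$ transverse to $F(p)$, and that iterating a return map contracts $\Gr(k,d)$ toward its invariant $k$-plane at rate $e^{-mn(\lambda_k-\lambda_{k+1}-2\delta)}$, a rate governed only by eigenvalue moduli and hence \emph{uniform} over all periodic orbits. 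Combining this uniform contraction with the shadowing/closing property of the transitive SFT and the $\beta$-H\"older regularity of $A$ --- so that partial products of $A$ along orbit segments that $\epsilon$-shadow each other differ by a factor $\exp(O(\epsilon^\beta))$ --- a telescoping estimate shows that $p\mapsto(E(p),F(p))$ is H\"older on the periodic set and that the angle between $E(p)$ and $F(p)$ is bounded below there. By density of periodic points these extend to a continuous, everywhere-transverse, $A$-invariant splitting $E\oplus F$, which one then feeds into the reduction above.

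The step that genuinely exploits that the data is narrow \emph{on every periodic orbit simultaneously} --- and which is the reason the global statement is harder than its perturbative cousin, where the splitting exists for free, cf.~\cite{dewitt2021local} --- is the uniform control of the transient constants in the Grassmannian contraction, equivalently a uniform lower bound on $\angle(E(p),F(p))$ and a uniform bound on the non-conformality of the return matrices $A^n(p)$. A single return matrix with narrow eigenvalue data can still be an arbitrarily large shear, for which these quantities are uncontrolled; what forbids this is that, by transitivity of $\Sigma$, such a return matrix could be concatenated with those of other periodic orbits to manufacture a new periodic orbit whose return matrix has eigenvalue moduli far from $(e^{n\lambda_1},\dots,e^{n\lambda_d})$, contradicting $\delta$-narrowness. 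Turning this heuristic into a uniform estimate is the technical core --- a non-perturbative substitute for the Franks and Bochi--Bonatti perturbation lemmas --- and it is precisely here that one fixes $\Sigma$, the exponents, $k$ and $\beta$ and only afterward takes $\delta$ small, so that the induced exponent gap $\lambda_k-\lambda_{k+1}-O(\delta)$ stays positive, the Grassmannian contraction rate beats the H\"older expansion rate of the closing lemma, and the concatenation argument still produces a contradiction.
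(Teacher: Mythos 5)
Your overall architecture is genuinely different from the paper's: you propose to build the splitting first (on periodic orbits, as sums of generalized eigenspaces of the return maps, then extend by density and H\"older continuity) and only afterwards verify domination via Lyapunov exponents and the subadditive variational principle. The paper never constructs invariant subspaces at any stage; it verifies the Bochi--Gourmelon singular-value-gap criterion (Theorem~\ref{thm:yoccoz_criterion}) directly along \emph{every} orbit, by shadowing an arbitrary orbit with a periodic one and comparing partial products over an initial segment of length $\approx\gamma n$ of a period-$n$ shadowing orbit. The quantitative engine there is Kalinin's Liv\v{s}ic-type estimate (Proposition~\ref{prop:maximum_norm_est_kalinin}) applied to the cocycle and its exterior powers, which converts narrow periodic eigenvalue data into uniform two-sided bounds on all singular values of $A^i(p)$ for $\gamma n\le i\le\gamma n+\ell$ (Proposition~\ref{prop:singular_values_separate}); since only norms of products are ever used, the question of how badly conditioned the eigenbasis of a single return matrix might be never arises.

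That question is exactly where your proposal has a genuine gap. You correctly observe that a return matrix with narrow eigenvalue moduli can still be an arbitrarily large shear, so that the angle between $E(p)$ and $F(p)$ and the transient constants in your Grassmannian contraction are a priori uncontrolled, and you propose to rule this out by a concatenation argument --- but you then state that ``turning this heuristic into a uniform estimate is the technical core'' without carrying it out. This is not a routine detail: it is the entire difficulty of the theorem (the non-perturbative substitute for the splitting that exists for free in the local setting), and without it the H\"older continuity of $p\mapsto(E(p),F(p))$ on the periodic set, the uniform transversality, and hence the extension by density all collapse. The quantitative form of your concatenation heuristic is essentially Kalinin's theorem; but even granting it, you would still need a further (nontrivial) argument converting uniform norm bounds on exterior powers of $A^i(p)$ along partial orbit segments into a uniform lower bound on the angle between the top-$k$ and bottom-$(d-k)$ generalized eigenspaces of the full return map $A^n(p)$, and then a comparison of these eigenspace splittings at nearby periodic points of wildly different periods. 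The paper's singular-value route bypasses all of this, which is why I would recommend reorganizing your argument around the criterion of Theorem~\ref{thm:yoccoz_criterion} rather than around eigenspaces.
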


As far as the authors are aware, the actual construction of a dominated splitting from periodic data has not been carried out except in systems satisfying particular bunching conditions. One of the only examples the authors are familiar with is due to Butler \cite{butler2017characterizing} in the context of the rigidity of the geodesic flow on complex hyperbolic space.  Butler's argument makes use of the 1:2 ratio between Lyapunov exponents in this setting and  involves delicate control of the regularity of the bundles he is considering \cite[Lem.~3.10]{butler2017characterizing}. In contrast, as we shall see, for constant periodic data our approach works for bundles that are merely H\"older with no control at all needed on the H\"older exponent. Other results constructing dominated splittings are known under other assumptions. For example, with fiber bunching, an additional constraint on the cocycle requiring $\|A\|/m(A)$ to be small, Velozo Ruiz constructed a dominated splitting for $\SL(2,\R)$ cocycles over subshifts of finite type under an assumption which is equivalent to the following: there exists $\epsilon>0$ such that for each periodic point $p^n$ of period $n$, $A^n(p^n)$ has an eigenvalue of modulus at least $e^{\epsilon n}$ \cite{velozo2020characterization}. However, as Velozo Ruiz demonstrates in that same paper, that without fiber bunching the same assumption on the eigenvalues of periodic points is insufficient to imply the existence of a dominated splitting. (For a more geometric example in the smooth setting see~\cite[Thm.~1]{gogolev2010diffeomorphisms}.) { In \cite{chen2023uniformity}, those authors obtain that for cocycles that are twisting and admit holonomies, e.g.~are fiber bunched, that there is uniform growth of the norm of the cocycle if one assumes that the largest exponent of every periodic point is the same.}
The construction of dominated splittings is also of interest in the field of Anosov representations. For example,
in order to study Anosov representations, Kassel and Potrie construct a dominated splitting for locally constant cocycles that have a uniform gap in their periodic data \cite{kassel2022eigenvalue}.

\subsection{Applications to non-constant periodic data.}

Perhaps the most interesting application of our approach is that we are able to prove global periodic data rigidity for systems that are $C^1$ close to Anosov automorphisms. When we talk about periodic data rigidity of Anosov diffeomorphisms that are not Anosov automorphisms, the problem is stated somewhat differently. In this case, if we have two Anosov diffeomorphisms on a torus in the same free homotopy class, then as before, there is a H\"older conjugacy $h$ between them. If $f$ is not an Anosov automorphism, then the periodic data may vary among periodic points of a given period. Thus the conjugacy $h$ provides a ``marking" of the points. Consequently, we say that $f$ and $g$ have the same \emph{periodic data with respect to a conjugacy $h$}, if for each periodic point $p$ of $f$ of period $k$ the linear maps $D_pf^k$ and $D_{h(p)}g^k$ are conjugate. We say that an Anosov diffeomorphism is $C^{1+{\text{H\"older}}}$ rigid if whenever another $C^2$ Anosov diffeomorphism is conjugate to it by a conjugacy $h$ and the diffeomorphism has the same periodic data with respect to the conjugacy, then $h$ is a $C^{1+\text{H\"older}}$ diffeomorphism.

We say that an Anosov automorphism is {\it irreducible} if the characteristic polynomial of $L\in \GL(d,\Z)$ is irreducible over $\Z$. This is a standard and often necessary assumption when studying rigidity in higher dimensions. 

 We deduce a global version of the main result of the second author's thesis~\cite[Thm.~A]{gogolev2008smooth}. 

\begin{theorem}\label{thm:non_linear_rigidity}
Let $L$ be an irreducible hyperbolic automorphism of $\TT^d$, $d\ge 3$, with simple Lyapunov spectrum. Then there exists a $C^1$-neighborhood $\cU\subseteq \Diff^{2}(\mathbb{T}^d)$ of L such that any $f \in\cU$ satisfying Property $\cA$ and any $C^2$ Anosov diffeomorphism $g$ with the same periodic data as $f$ are $C^{1+\textup{H\"older}}$ conjugate.
\end{theorem}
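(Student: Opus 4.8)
The plan is to reduce Theorem \ref{thm:non_linear_rigidity} to the local rigidity theorem of \cite{gogolev2008smooth}, the one ingredient of which is unavailable outside the perturbative regime --- an invariant dominated splitting of the derivative cocycle into one-dimensional subbundles --- being supplied here by Theorem \ref{thm:narrow_data}. Since $f$ and $g$ are Anosov diffeomorphisms in the homotopy class of $L$, the Franks--Manning theorem gives a bi-H\"older homeomorphism $h$ with $h\circ f=g\circ h$, and ``same periodic data'' is understood with respect to this $h$. Shrinking $\cU$, we may assume $Df_x$ is as uniformly $C^0$-close to $L$ as we wish; by structural stability of the finest dominated splitting of $L$ (equivalently, by standard estimates for uniformly hyperbolic linear cocycles), this forces the periodic exponents $\tfrac1n\log|\mu_i(D_pf^n)|$ to lie within a prescribed $\delta$ of the Lyapunov exponents $\lambda_1>\cdots>\lambda_d$ of $L$, uniformly over all periodic $p$ and all $n$, so $f$ has $\delta$-narrow periodic data centered at $(\lambda_1,\dots,\lambda_d)$. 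The crucial point is that, because $g$ has the same periodic data as $f$ relative to $h$, the diffeomorphism $g$ also has $\delta$-narrow periodic data centered at the same point, even though $g$ is not assumed to be anywhere near $L$. Fixing a uniform lower bound $\beta$ for the H\"older exponents of the derivative cocycles of $f$ and $g$ read through Markov partition codings, we shrink $\cU$ further so that $\delta\le\delta(\beta)$, the threshold of Theorem \ref{thm:narrow_data}.

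I would then apply Theorem \ref{thm:narrow_data}, in the form applicable to the derivative cocycle of an Anosov diffeomorphism of $\TT^d$ (obtained from the subshift version through Markov partitions), to $Df$ and to $Dg$. Since $L$ has simple spectrum, $\lambda_k>\lambda_{k+1}$ for every $k$, so we obtain for each $k$ a dominated splitting of index $k$ for $Df$ and for $Dg$. Dominated splittings of consecutive indices are nested (see \cite{bochi2009some}), so intersecting the index-$k$ stable bundle with the index-$(k-1)$ unstable bundle produces continuous --- indeed H\"older --- invariant one-dimensional subbundles $L_1^f,\dots,L_d^f$ forming a dominated splitting $TM=L_1^f\oplus\cdots\oplus L_d^f$ that refines $E^s_f\oplus E^u_f$, and likewise $L_1^g\oplus\cdots\oplus L_d^g$ for $g$. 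Invoking Property $\cA$ at the point where it is used in the argument of \cite{gogolev2008smooth} --- it provides the integrability of the partial sums of the finest dominated splitting, hence a full flag of $f$-invariant weak (un)stable foliations --- and using uniqueness of dominated splittings, one checks that $h$ carries the flag of $f$ to the flag of $g$, and in particular intertwines the leafwise one-dimensional dynamics of $f$ along $L_i^f$ with that of $g$ along $L_i^g$.

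Along each one-dimensional foliation, the matching of periodic data says that the products of the one-dimensional derivative cocycles of $f$ and of $g$ (transported by $h$) agree over every periodic orbit; by the Liv\v{s}ic theorem their logarithms are H\"older-cohomologous, and the classical one-dimensional smooth-conjugacy argument (of de la Llave--Marco--Moriy\'{o}n type; cf.~\cite{gogolev2008smooth,kalinin2009anosov}) then upgrades $h$ to a uniformly $C^{1+\textup{H\"older}}$ map along the leaves of each $L_i$. Feeding this into Journ\'{e}'s regularity lemma along the chain of intermediate foliations furnished by Property $\cA$ --- first inside $E^u$, then inside $E^s$, and finally combining the full unstable and stable foliations --- yields $h\in C^{1+\textup{H\"older}}(\TT^d)$.

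The only place this argument leaves the perturbative setting of \cite{gogolev2008smooth} is the construction, in the second paragraph, of the one-dimensional dominated splitting for $Dg$: a priori one knows nothing about $g$ except that its periodic data is narrow, and converting that into a genuine dominated splitting is precisely the content of Theorem \ref{thm:narrow_data}; everything else is a transcription of the local proof. Accordingly, the main technical care within the present proof is the uniformity in the first paragraph --- choosing $\cU$ so that the single threshold $\delta(\beta)$ is simultaneously met by $f$ and by $g$ and so that $\beta$ may be taken uniform --- together with the standard transfer of Theorem \ref{thm:narrow_data} from subshifts of finite type to the derivative cocycle on the torus.
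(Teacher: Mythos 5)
Your overall strategy is the paper's: use Theorem \ref{thm:narrow_data} to supply the dominated splitting for $g$ and then run the local argument of \cite{gogolev2008smooth}. But two steps that you assert are in fact the places where the non-perturbative nature of $g$ bites, and neither is justified as written. First, the uniformity of the H\"older exponent $\beta$. Theorem \ref{thm:narrow_data} reads ``for each $\beta$ there exists $\delta(\beta)$,'' and $\delta$ determines $\cU$, which must be fixed before $g$ is given; so you need a lower bound for the H\"older exponent of the cocycle attached to $g$ (equivalently, of the bundles $E^{u,g},E^{s,g}$ or of the coding of $g$) that depends only on $L$ and $\cU$ and holds for \emph{every} $C^2$ Anosov $g$ with the same periodic data as some $f\in\cU$ --- and such a $g$ lies in no neighborhood of $L$, so its bunching/expansion rates, hence its H\"older exponents, are a priori uncontrolled. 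You simply ``fix a uniform lower bound $\beta$.'' The paper's Lemma \ref{lem:holder_control} is exactly the missing mechanism: the periodic data of $g$ matches that of $f$, hence satisfies uniform exponential bounds over periodic orbits; Kalinin's approximate Liv\v{s}ic theorem \cite{kalinin2011livsic} upgrades these to exponential bounds at all points; and the Anosov/Hirsch--Pugh formula then gives a uniform H\"older exponent for the invariant bundles. Without this, the single threshold $\delta(\beta)$ cannot be met.

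Second, the matching of the weak flags under $h$. ``Uniqueness of dominated splittings'' is an intrinsic statement about a single cocycle; since $h$ is only H\"older it does not conjugate $Df$ to $Dg$ as linear cocycles, so uniqueness says nothing about whether $h$ maps the weak foliations of $f$ onto those of $g$. Likewise, Property $\cA$ is a transitivity hypothesis on the intermediate foliations of $f$; it neither integrates the weak distributions $E^{wu,g}_i$ of the far-away diffeomorphism $g$ nor matches foliations across $h$. The paper proves both points in Lemma \ref{lem:weak_foliations_exist} by a growth-rate argument: curves tangent to $E^{wu,g}_i$ separate under $g^n$ at rate at most $e^{n(\lambda_j+\epsilon)}$, the conjugacy with $L$ is a quasi-isometry along unstable leaves, and in the linear model a gap in separation rates characterizes membership in a weak leaf; this simultaneously integrates $E^{wu,g}_i$ and shows $h$ intertwines $\mc{W}^{wu,f}_i$ with $\mc{W}^{wu,g}_i$. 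Property $\cA$ enters only afterwards, in the inductive holonomy argument showing that the \emph{strong} unstable foliations are also intertwined. You should supply both of these arguments (or cite them) rather than flag them as ``technical care.''
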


A perturbation of $L$ has the same intermediate-speed foliations that $L$ does and Property $\cA$, which conjecturally always holds, asserts that these foliations are transitive. Examples of open sets of diffeomorphisms which satisfy Property $\cA$ in dimensions up to 5 were given in~\cite[Sections 4.2 and 5]{gogolev2008smooth}. In fact, in dimension 3, Property $\mc{A}$ always holds. So, we can actually conclude a stronger result, generalizing~\cite{gogolev2008differentiable}.
\begin{theorem}\label{thm:nonlinear_rigidity_3}

Fix exponents $\lambda_1>\lambda_2>0>\lambda_3$. Then there exists $\delta>0$ such that if $f$ is any $C^2$ Anosov diffeomorphism of $\mathbb{T}^3$ with $\delta$-narrow spectrum centered at $\lambda_1,\lambda_2,\lambda_3$, then $f$ is globally $C^{1+\text{H\"older}}$ periodic data rigid. In particular, this implies that there is a $C^1$ open neighborhood of any Anosov automorphism of $\mathbb{T}^3$ with simple Lyapunov spectrum such that every member of that neighborhood is periodic data rigid.
\end{theorem}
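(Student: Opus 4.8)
First I would reduce everything to a single missing ingredient. The strategy is to rerun the rigidity argument of \cite{gogolev2008differentiable,gogolev2008smooth} in dimension $3$, with Theorem~\ref{thm:narrow_data} supplying the one step those local results extracted from $C^1$-proximity to the linear model: a dominated splitting of the two-dimensional unstable bundle. Fix $\lambda_1>\lambda_2>0>\lambda_3$ and let $f$ be a $C^2$ Anosov diffeomorphism of $\mathbb{T}^3$ with $\delta$-narrow spectrum centered at these values, so that $\dim E^s_f=1$ and $\dim E^u_f=2$; let $L\in\GL(3,\Z)$ be the hyperbolic automorphism that $f$ induces on $H_1(\mathbb{T}^3)$. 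Let $g$ be a $C^2$ Anosov diffeomorphism conjugate to $f$ by the conjugacy $h$ (which is automatically bi-H\"older) with the same periodic data, so $hf=gh$ and $D_pf^k$ is conjugate to $D_{h(p)}g^k$ at every periodic point $p$ of period $k$. Since $h$ is a period-preserving bijection of periodic orbits and $\delta$-narrowness is a statement about the moduli of the eigenvalues of the maps $D_pf^k$, the diffeomorphism $g$ also has $\delta$-narrow spectrum centered at $(\lambda_1,\lambda_2,\lambda_3)$.

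The crucial step is to produce a dominated splitting of $E^u_f$. The narrow-spectrum hypothesis gives uniform sub-exponential control on the norms $\|Df^n|_{E^s_f}\|$, $\|Df^{-n}|_{E^u_f}\|$ and $\|Df^n|_{E^u_f}\|$, and this in turn forces $E^u_f$ to be H\"older with some exponent $\beta_0=\beta_0(\lambda_1,\lambda_2,\lambda_3)>0$ that is uniform over all such $f$. Coding $f$ by a Markov partition presents $Df|_{E^u_f}$ as a $\beta_0$-H\"older $\GL(2,\R)$ cocycle over a transitive, invertible subshift of finite type whose periodic data is $\delta$-narrow centered at $(\lambda_1,\lambda_2)$, and Theorem~\ref{thm:narrow_data}, applied with $d=2$ and $k=1$, then produces a dominated splitting of index $1$, provided $\delta\le\delta(\beta_0,\lambda_1,\lambda_2)$; this is how $\delta$ is finally chosen. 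Transporting the splitting back to $\mathbb{T}^3$ gives a continuous $Df$-invariant decomposition $E^u_f=E^1_f\oplus E^2_f$ with $E^1_f$ dominating, and the same argument applied to $g$ yields $E^u_g=E^1_g\oplus E^2_g$.

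With these splittings in hand I would follow the scheme of \cite{gogolev2008differentiable}. The decomposition $E^s_f\oplus E^2_f\oplus E^1_f$ into one-dimensional subbundles makes $f$ partially hyperbolic and homotopic to the hyperbolic automorphism $L$; by the dynamical coherence of such diffeomorphisms of $\mathbb{T}^3$ (Potrie), $E^s_f\oplus E^2_f$ integrates to a center-stable foliation $W^{cs}_f$, while $E^1_f$ always integrates to the strong unstable foliation $W^1_f$ and $E^s_f$, $E^u_f$ integrate to $W^s_f$, $W^u_f$; set $W^2_f=W^{cs}_f\cap W^u_f$, tangent to $E^2_f$. Because the tangent distributions are H\"older, all these foliations have uniformly $C^{1+\text{H\"older}}$ leaves, and since each of them is dynamically defined, $h$ carries $W^s_f,W^1_f,W^2_f,W^u_f,W^{cs}_f$ leaf-wise onto the corresponding foliations of $g$. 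Property $\mathcal{A}$, the transitivity of the intermediate foliations, holds automatically in dimension $3$, so nothing beyond narrow spectrum is required of $f$. Along each one-dimensional foliation $h$ intertwines a pair of one-dimensional nonuniformly hyperbolic systems with matching periodic data, so Liv\v{s}ic theory kills the obstruction to smoothness of the leaf-wise derivative and $h$ is uniformly $C^{1+\text{H\"older}}$ along the leaves of $W^s_f$, $W^1_f$ and $W^2_f$; the dominated splitting provides the H\"older holonomies required by Journ\'e's regularity lemma, which promotes this first to $C^{1+\text{H\"older}}$ regularity of $h$ along $W^u_f$ (using its regularity along $W^1_f$ and $W^2_f$) and then, since $W^s_f$ and $W^u_f$ are transverse and span $T\mathbb{T}^3$, to $C^{1+\text{H\"older}}$ regularity of $h$ on all of $\mathbb{T}^3$.

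The main obstacle is exactly the construction of the dominated splitting for an $f$ that need not be $C^1$-close to $L$, and that is precisely what Theorem~\ref{thm:narrow_data} provides; the rest is an adaptation of the established local arguments, and the one point requiring care — verifying that a single $\delta$ can be chosen so as to force the uniform H\"older-exponent bound on $E^u_f$, to make Theorem~\ref{thm:narrow_data} applicable to both $f$ and $g$, and to control the constants in the regularity step — is routine. Finally, the concluding sentence of the theorem is immediate, since a sufficiently $C^1$-small perturbation of an Anosov automorphism of $\mathbb{T}^3$ with simple Lyapunov spectrum has periodic data uniformly close to that of the automorphism, hence $\delta$-narrow spectrum, and so is covered by the theorem.
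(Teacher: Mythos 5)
Your proposal is correct and follows essentially the same route as the paper: derive uniform H\"older control on $E^u$ from the $\delta$-narrow periodic data via Kalinin's Liv\v{s}ic-type bounds, apply Theorem~\ref{thm:narrow_data} to obtain the dominated splitting of the two-dimensional unstable bundle for both $f$ and $g$, and then invoke the rigidity machinery of \cite{gogolev2008differentiable} (which the paper cites as a black box where you unpack the foliation/Journ\'e steps). The only cosmetic difference is your explicit passage through a Markov coding, where the paper notes one may equally rerun the shadowing argument directly on the hyperbolic system.
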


The proof of the above theorems follows from applying our Theorem~\ref{thm:narrow_data} to the approach in Theorem~A of~\cite{gogolev2008differentiable, gogolev2008smooth}. Indeed, for Theorem \ref{thm:non_linear_rigidity}, for any $\delta>0$ by choosing a sufficiently small $\cU$ we have that $f$ has $\delta$-narrow periodic data. By the main assumption, the diffeomorphism $g$ also has $\delta$-narrow periodic data and, hence, Theorem~\ref{thm:narrow_data} yields a dominated splitting for $g$. After this the proof in~\cite{gogolev2008smooth} goes through even when $g\notin \cU$.

\subsection{Applications to smooth rigidity of Anosov automorphisms}

Theorem \ref{thm:rigidity_dim_3} is a special case of a more general, higher dimensional result which continues the work on higher dimensional toral automorphisms by the second author, Guysinsky, Kalinin and Sadovskaya~\cite{gogolev2008differentiable, gogolev2008smooth, kalinin2009anosov, gogolev2011local} and provides a global version of the higher dimensional result of~\cite{gogolev2011local}. 
\begin{proposition}\label{cor:generic_anosov_automorphism}
Let $L \colon \TT^d \to \TT^d$ be an irreducible Anosov automorphism such that no three of its eigenvalues have the same modulus. Then $L$ is $C^{1+\textup{H\"older}}$ rigid.
\end{proposition}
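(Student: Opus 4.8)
The plan is to reduce the proposition to the existing (perturbative) periodic data rigidity theory for toral automorphisms, supplying by Theorem~\ref{thm:dominated_splitting_constant_data} the one ingredient which is automatic in the perturbative setting but not in general: a fine $Dg$-invariant dominated splitting of the tangent bundle. So, fix a $C^2$ Anosov diffeomorphism $g$ of $\TT^d$ conjugate to $L$ by the (H\"older, Franks--Manning) conjugacy $h$ and having the same periodic data as $L$. Since $T\TT^d$ is trivial we regard $Dg$ as a map $\TT^d\to\GL(d,\R)$. A Markov partition for $g$ gives a transitive invertible subshift of finite type $(\Sigma,\sigma)$ together with a finite-to-one semiconjugacy $\pi\colon\Sigma\to\TT^d$, injective off the boundary set, and the pulled-back cocycle $A=Dg\circ\pi\colon\Sigma\to\GL(d,\R)$ is H\"older. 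Periodic orbits of $\sigma$ project to periodic orbits of $g$, and since $g$ has the periodic data of the automorphism $L$, the cocycle $A$ has constant periodic data with exponents $\lambda_1\ge\cdots\ge\lambda_d$ equal to the logarithms of the moduli of the eigenvalues of $L$ (Definition~\ref{defn:constant_periodic_data}).

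Now, for every index $k$ with $\lambda_k>\lambda_{k+1}$, Theorem~\ref{thm:dominated_splitting_constant_data} yields a dominated splitting of $A$ of index $k$. Dominated splittings of a given index are unique, so those of distinct indices are nested and their common refinement is a continuous $A$-invariant splitting $\Sigma\times\R^d=\bigoplus_i\mc{E}_i$, with the exponents constant on each block and strictly decreasing from one block to the next. Because $L$ has no three eigenvalues of the same modulus, $\dim\mc{E}_i\le 2$ for all $i$; because $L$ is irreducible its characteristic polynomial is separable, so a two-dimensional block corresponds either to a complex-conjugate pair of eigenvalues or to a real pair $\{\mu,-\mu\}$. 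By uniqueness the splitting is constant on the $\pi$-fibers over points with a single preimage, so it descends to a continuous $Dg$-invariant splitting $T\TT^d=\bigoplus_i E_i$ refining $E^s_g\oplus E^u_g$ with $\dim E_i\le 2$. This is exactly the splitting whose construction, as explained after Theorem~\ref{thm:rigidity_dim_3}, is the only obstruction to passing from the local to the global rigidity statement.

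With the splitting in hand the argument of~\cite{gogolev2008differentiable,gogolev2008smooth,kalinin2009anosov,gogolev2011local} applies. By the integrability theory for dominated subbundles (Hirsch--Pugh--Shub), the successive partial sums of the $E_i$ inside $E^u_g$ and inside $E^s_g$ are tangent to $Dg$-invariant foliations, and intersecting these produces a foliation $W_i$ tangent to each individual $E_i$; since $L$ is irreducible the corresponding linear foliations of $L$ are minimal, so here Property~$\cA$ holds automatically. On each one-dimensional $E_i$ the restricted cocycle has constant periodic data, hence by the Liv\v{s}ic theorem $\log\|A|_{E_i}\|$ is cohomologous to the constant $\lambda_i$ (pass to the orientation double cover to absorb the sign if needed), and the de la Llave--Marco--Moriy\'on regularity argument shows $h$ is $C^{1+\textup{H\"older}}$ along the leaves of $W_i$. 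On each two-dimensional $E_i$ the restricted cocycle carries an invariant conformal structure, and the periodic-data rigidity for conformal cocycles of Kalinin--Sadovskaya gives the same regularity of $h$ along $W_i$. An iterated application of Journ\'e's lemma along this flag of foliations then upgrades the conclusion to $h\in C^{1+\textup{H\"older}}$, as desired.

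The genuinely new content is Theorem~\ref{thm:dominated_splitting_constant_data}; granting it, the points requiring care are that the common refinement of the index-$k$ dominated splittings really descends to a continuous $Dg$-invariant splitting of $T\TT^d$ whose two-dimensional blocks carry a conformal structure --- routine given uniqueness of dominated splittings --- the integrability of the intermediate subbundles to foliations along which Journ\'e's lemma can be run, and the verification that the regularity arguments of the cited works, originally set up under a $C^1$-closeness hypothesis, used that hypothesis only to obtain this splitting. I expect this last point to require the most bookkeeping: one must confirm that nothing in those proofs secretly uses perturbativeness, in particular that the comparison of the foliations $W_i$ of $g$ with the linear foliations of $L$ --- which rests on $h$ carrying leaves to leaves --- remains valid without assuming $g$ close to $L$.
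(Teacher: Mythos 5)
Your proposal is correct and follows essentially the same route as the paper: produce the dominated splitting into blocks of dimension at most two from Theorem~\ref{thm:dominated_splitting_constant_data} (the paper does this via the general hyperbolic-systems version rather than explicit Markov coding, but the two reductions are interchangeable), then run the machinery of \cite{gogolev2008smooth, gogolev2011local} --- integrability of the intermediate bundles, conformality on two-dimensional blocks via Kalinin--Sadovskaya, the holonomy/isometry argument for intertwining strong foliations, and Journ\'e's lemma. The paper's own argument is likewise only a sketch deferring to \cite[Prop.~2.3]{gogolev2011local} for the two-dimensional case, so your write-up matches it in both strategy and level of detail.
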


Anosov automorphisms are also known to exist on nilmanifolds, which are compact quotients of a nilmanifold by a lattice. 
Work of the first author~\cite{dewitt2021local} in this setting can be now globalized which we can briefly formulate as follows. The conditions of ``irreducibility" and ``sorted spectrum" are defined in~\cite{dewitt2021local}, but we note here that they are necessary for even local rigidity.

\begin{corollary} \label{cor:nilmanifolds}
Suppose that $A\colon N/\Gamma\to N/\Gamma$ is an irreducible Anosov automorphism of a nilmanifold $N/\Gamma$ with simple, sorted Lyapunov spectrum. Then $A$ is globally $C^{1+\textup{H\"older}}$ periodic data rigid.
In particular, this shows that there exist non-toral nilmanifold Anosov automorphisms in arbitrarily high dimension which are $C^{1+\textup{H\"older}}$ rigid.
\end{corollary}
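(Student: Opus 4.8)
The plan is to obtain Corollary~\ref{cor:nilmanifolds} by feeding Theorem~\ref{thm:dominated_splitting_constant_data} into the local rigidity machinery of~\cite{dewitt2021local}, replacing the one step there that uses $C^1$-proximity. So let $A\colon N/\Gamma\to N/\Gamma$ be irreducible with simple, sorted spectrum, write $d=\dim N$, and let $\lambda_1>\cdots>\lambda_d$ be the logarithms of the moduli of the eigenvalues of the linearization $dA$ on the Lie algebra of $N$; these are strictly ordered because the spectrum is simple (so in particular $dA$ has only real eigenvalues). Given a $C^2$ Anosov diffeomorphism $g$ topologically conjugate to $A$ with the same periodic data, I would first invoke Manning's theorem that an Anosov diffeomorphism of a nilmanifold is topologically conjugate to an automorphism (the nilmanifold analogue of the Franks--Manning theorem) to get a H\"older conjugacy $h$ with $hgh^{-1}=A$ that matches periodic orbits of $g$ with periodic orbits of $A$. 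Since the periodic data of $g$ equals that of $A$, for every $g$-periodic point $p$ of period $n$ the moduli of the eigenvalues of $D_pg^n$ are exactly $e^{n\lambda_1},\dots,e^{n\lambda_d}$; that is, the H\"older derivative cocycle $Dg$ has \emph{constant} periodic data associated to $(\lambda_1,\dots,\lambda_d)$ in the sense of Definition~\ref{defn:constant_periodic_data}.

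Next I would pass to a subshift of finite type: a Markov partition for $g$ gives a transitive invertible subshift of finite type $\Sigma$ with a finite-to-one semiconjugacy onto $(N/\Gamma,g)$, and pulling $Dg$ back yields a H\"older cocycle over $\Sigma$ with the same constant periodic data. Since $\lambda_k>\lambda_{k+1}$ for every $1\le k\le d-1$, Theorem~\ref{thm:dominated_splitting_constant_data} produces a dominated splitting of index $k$ for each such $k$; by uniqueness of dominated splittings of a fixed index these assemble into a single $Dg$-invariant flag $0\subset F_1\subset\cdots\subset F_{d-1}\subset F_d=TM$ over $\Sigma$, which (again by uniqueness) is constant on fibers of the coding map and hence descends to a continuous $Dg$-invariant dominated flag on $N/\Gamma$, i.e. to the finest dominated splitting $TM=E_1\oplus\cdots\oplus E_d$ into line bundles with $E_i$ carrying the exponent $\lambda_i$ and $E_i$ dominating $E_{i+1}$. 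This refines $E^s\oplus E^u$ and is the exact analogue of the splitting $A$ itself carries.

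With this splitting in hand I would run the argument of~\cite{dewitt2021local} verbatim: match the non-stationary normal forms along each $E_i$ with the linear ones of $A$, propagate the resulting regularity of $h$ along the intermediate foliations, and glue the pieces by a Journ\'e-type regularity lemma to conclude $h\in C^{1+\textup{H\"older}}$; this gives the asserted global periodic data rigidity. The ``in particular'' then follows because irreducible non-toral nilmanifold automorphisms with simple, sorted Lyapunov spectrum exist in arbitrarily high dimension~\cite{dewitt2021local}. I expect the essential difficulty to be exactly the one already resolved by Theorem~\ref{thm:dominated_splitting_constant_data}: producing the dominated splitting from the periodic data alone, with no bunching and no perturbative hypothesis. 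The remaining work should be comparatively routine --- the transfer of the splitting from $\Sigma$ back to $N/\Gamma$ (a standard consequence of continuity and uniqueness, with the usual care at the boundaries of the Markov partition), and an audit of~\cite{dewitt2021local} confirming that its appeal to $C^1$-closeness is confined to constructing the splitting and that every subsequent step needs only the splitting together with the irreducibility and sortedness of $A$.
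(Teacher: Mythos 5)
Your proposal is correct and follows essentially the same route as the paper, which itself only remarks that the proof is \emph{mutatis mutandis} that of \cite{dewitt2021local} once the dominated splitting is supplied by the new periodic-data results. The one cosmetic difference is that you transfer to the symbolic setting via a Markov partition, whereas the paper's preferred route for non-symbolic bases is to rerun the shadowing argument directly using the closing property (Theorem~\ref{thm:non-trivial_bundle}); for exactly constant periodic data either route works, since the Hölder loss through the coding map is harmless when no quantitative bound on the exponent is needed.
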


\subsection{Relationship to prior work} Now we provide some commentary on how the current work fits into the rigidity program. Until very recently, the strongest local periodic data rigidity result available in higher dimensions was \cite{gogolev2011local}, which requires that the Anosov automorphism has no more than three eigenvalues with the same modulus, as well as be irreducible. This builds on earlier work such as \cite{gogolev2008differentiable, gogolev2008smooth}, which studies the case where each eigenvalue of the automorphism has a distinct modulus. Those results produced a conjugacy that is $C^{1+\text{H\"older}}$, though not necessarily one that is $C^{\infty}$. There is another line of work that shows that one may ``bootstrap" a $C^1$ conjugacy to a $C^{\infty}$ conjugacy. One of results in this direction is \cite{gogolev2017bootstrap}, which takes advantage of strong Diophantine properties of linear invariant foliations and bootstraps a $C^1$ conjugacy between an Anosov automorphism of $\mathbb{T}^3$ and its $C^1$ small perturbation all the way to $C^{\infty}$. Recently Kalinin, Sadovskaya, and Wang showed that it is possible to bootstrap a $C^1$ conjugacy between an Anosov automorphism and a $C^k$ small perturbation all the way to a $C^{\infty}$ conjugacy \cite{kalinin2023smooth}. Due to the KAM scheme used in that paper, $k$ may be very large, however.
Even more recently Zhenqi Wang announced at the Spring 2023 Maryland Dynamics conference that local rigidity for $C^k$ small perturbations holds for all toral automorphisms with irreducible characteristic polynomial in any dimension without any spectral restrictions \cite{wang_rigidity}. Similarly to~ \cite{gogolev2017bootstrap}, this result also strongly utilizes Diophantine properties of invariant foliations and, similarly to~\cite{kalinin2023smooth}, requires that the perturbation be small in a high regularity norm in order to run a KAM scheme. 

Some global rigidity results were already known due to work of Kalinin and Sadovskaya \cite{kalinin2009anosov}, which will be explained during the proof of Theorem \ref{thm:rigidity_dim_3}. 
These results apply to hyperbolic toral automorphisms in dimension $3$ that have a conjugate pair of complex eigenvalues, as well as hyperbolic toral automorphisms in dimension $4$ for which the eigenvalues of the automorphism have only two distinct moduli. In each case, these assumptions are essentially that the eigenvalues of the automorphism have exactly two distinct moduli: one contracting, one expanding. There are certain other earlier results due to de la Llave that imply global rigidity under the assumption that the entire stable and unstable bundles of a non-linear Anosov diffeomorphism be conformal \cite{delallave2002rigidity}. The only dimension in which unrestricted, non-perturbative $C^{\infty}$ periodic data rigidity of toral automorphisms was previously shown is dimension $2$, which goes back to the late 1980s and is due to work of de la Llave, Marco, and Moriy\'{o}n \cite{delallave1992smooth, marco1987invariants, delallave1987invariants}.

We also mention that recently the second author and F.~Rodriguez Hertz developed a new matching functions approach to rigidity of Anosov diffeomorphisms which disregards the dominated splitting structure, even when it is present, and yields global results~\cite{gogolev2023smooth}. This approach only applies to highly non-linear (very non-algebraic) Anosov diffeomorphisms with one dimensional stable foliation. Still there is some overlap between our Theorem~\ref{thm:nonlinear_rigidity_3} and~\cite[Theorem 1.5]{gogolev2023smooth}. Namely, under certain further spectral assumptions on $L\colon\TT^3\to\TT^3$ the method of~\cite{gogolev2023smooth}  gives $C^\infty$ periodic data rigidity for diffeomorphisms in an open and dense subset of a neighborhood of $L$, but not necessarily all diffeomorphisms in an open neighborhood of $L$ as is obtained in Theorem \ref{thm:nonlinear_rigidity_3}.

\subsection{Remarks on the proof}

We now say a couple words about the ideas behind the proof of Theorem \ref{thm:dominated_splitting_constant_data}. In order to construct a dominated splitting, we use a characterization of domination that we may verify for each orbit individually. We will verify this criterion by using a shadowing argument. The characterization of uniform hyperbolicity is due to Yoccoz \cite{yoccoz2004some} in dimension two and was extended by Bochi and Gourmelon \cite{bochi2009some} to a characterization of domination in higher dimensions.
\begin{theorem}\label{thm:yoccoz_criterion} 
\cite[Thm.~A]{bochi2009some}
Suppose that $f\colon X\to X$ is a homeomorphism of a compact metric space and that $\mc{E}$ is a continuous vector bundle over $X$. For a continuous cocycle $\mc{A}\colon \mc{E}\to \mc{E}$, the following are equivalent:
\begin{enumerate}
\item 
$\mc{A}$ has a dominated splitting of index $k$,
\item 
There exist $C>0$ and $\tau<1$ such that $\frac{\sigma_{k+1}(\mc{A}(n,x))}{\sigma_k(\mc{A}(n,x))}<C\tau^n$ for all $x\in X$ and $n\ge 0$, where $\mc{A}(x,n)$ is defined in equation \eqref{defn:cocycle_iterate} and $\sigma_k(\mc{A}(n,x))$ denotes the $k$th singular value of $\mc{A}(n,x)$.
\end{enumerate}

\end{theorem}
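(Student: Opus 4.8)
\medskip
\noindent\textbf{Proof plan for Theorem~\ref{thm:yoccoz_criterion}.}

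The implication $(1)\Rightarrow(2)$ is the easy half. If $\mc{E}=E\oplus F$ is a dominated splitting with $\dim E=k$, then the minimax characterization of singular values, applied to the $k$-plane $E_x$ and the $(d-k)$-plane $F_x$, gives $\sigma_k(\mc{A}(n,x))\ge m(\mc{A}(n,x)|_{E_x})$ and $\sigma_{k+1}(\mc{A}(n,x))\le \|\mc{A}(n,x)|_{F_x}\|$ for all $x$ and $n$, whence $\sigma_{k+1}/\sigma_k<C\tau^n$ by the domination inequality. (Two continuous metrics on $\mc{E}$ over the compact base are uniformly equivalent, so the metric defining the $\sigma_i$ only affects $C$.) So the real work is $(2)\Rightarrow(1)$, and the plan there is to build the two invariant subbundles out of singular subspaces of long orbit segments and only then verify the domination estimate.

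For $(2)\Rightarrow(1)$, put $M_0:=\sup_x(\|\mc{A}(1,x)\|+\|\mc{A}(1,x)^{-1}\|)<\infty$ and use $(2)$ to fix $N$ with $\sigma_k(\mc{A}(n,x))>2\sigma_{k+1}(\mc{A}(n,x))$ for $n\ge N$. For such $n$, the map $\mc{A}(n,f^{-n}x)\colon\mc{E}_{f^{-n}x}\to\mc{E}_x$ has a well-defined top-$k$ \emph{left} singular subspace $E_n(x)\subseteq\mc{E}_x$, depending continuously on $x$. I would first show $E_n(x)\to E(x)$ uniformly and exponentially fast. This is a perturbation-of-eigenspaces ($\sin\Theta$/Davis--Kahan) estimate: if $D$ has singular gap $\gamma:=\sigma_{k+1}(D)/\sigma_k(D)$ small relative to a fixed conditioning constant, then composing $D$ on either side by a boundedly invertible factor moves its top-$k$ left singular subspace by $O(\gamma)$. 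Applying this with $D=\mc{A}(n-1,f^{-(n-1)}x)$ and the factor $\mc{A}(1,f^{-n}x)$, using $\mc{A}(n,f^{-n}x)=\mc{A}(n-1,f^{-(n-1)}x)\circ\mc{A}(1,f^{-n}x)$, gives $\dist(E_n(x),E_{n-1}(x))\le c(M_0)\,C\tau^{n-1}$, uniformly in $x$; summing yields the continuous $k$-plane field $E:=\lim_n E_n$. Invariance is the same estimate rearranged: writing $\mc{A}(n,f^{-n}x)=\mc{A}(1,f^{-1}x)\circ\mc{A}(n-1,f^{-n}x)$ and using that post-composition by the fixed invertible factor $\mc{A}(1,f^{-1}x)$ maps the top-$k$ left singular subspace of the inner factor to its image, up to an error $O(\gamma^2)$, one gets $\mc{A}(1,x)E(x)=E(f(x))$. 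The complementary bundle comes from running the same construction for the inverse cocycle $\check{\mc{A}}(y):=\mc{A}(1,f^{-1}y)^{-1}$ over $f^{-1}$: since $\sigma_i(\check{\mc{A}}(n,\cdot))=\sigma_{d+1-i}(\mc{A}(n,f^{-n}(\cdot)))^{-1}$, its singular gap at index $d-k$ is again $\sigma_{k+1}(\mc{A}(n,f^{-n}(\cdot)))/\sigma_k(\mc{A}(n,f^{-n}(\cdot)))<C\tau^n$, and one obtains a continuous $(d-k)$-plane field $F$, $\check{\mc{A}}$- hence $\mc{A}$-invariant, with $F(x)=\lim_n R_n(x)^\perp$, where $R_n(x)$ is the top-$k$ \emph{right} singular subspace of $\mc{A}(n,x)$.

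The hard part is turning ``two complementary continuous $\mc{A}$-invariant subbundles'' into an honest dominated splitting --- in particular, even into $E_x\cap F_x=0$ for all $x$. It is enough to establish, for a uniform $c>0$, that $m(\mc{A}(n,x)|_{E_x})\ge c\,\sigma_k(\mc{A}(n,x))$ and $\|\mc{A}(n,x)|_{F_x}\|\le c^{-1}\sigma_{k+1}(\mc{A}(n,x))$, since with $(2)$ this is exactly domination. Because complementary-index left singular subspaces of $\mc{A}(n,x)$ are mutually orthogonal (and likewise for right singular subspaces), these two estimates reduce to: $E_x$ is uniformly transverse to $R_n(x)^\perp$, and $F_x$ is \emph{close} to $R_n(x)^\perp$. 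The difficulty is that the construction only yields $\dist(F_x,R_n(x)^\perp)=O(\tau^n)$, which is far too weak: to absorb the (possibly enormous) factor $\sigma_1(\mc{A}(n,x))$ one needs $F_x$ to be within about $\sigma_{k+1}(\mc{A}(n,x))/\sigma_1(\mc{A}(n,x))$ of $R_n(x)^\perp$. The remedy is a multi-scale argument that uses crucially that the gap decay in $(2)$ is exponential: by comparing the singular subspaces of $\mc{A}(n,x)$ with those of $\mc{A}(m,\cdot)$ along the orbit for $m$ a sufficiently large fixed multiple of $n$, the governing Davis--Kahan bound picks up a factor $\sim\tau^{m}$ which, for such $m$, beats the merely-exponential-in-$n$ distortion incurred and is therefore $\le e^{-cn}$; this delivers both the sharp closeness of $F_x$ to $R_n(x)^\perp$ and the uniform transversality of $E_x$ and $R_n(x)^\perp$. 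This last point --- stabilizing the singular subspaces fast enough on the right time scale --- is the technical core of \cite{bochi2009some}; the two-dimensional case, where domination is uniform hyperbolicity, is due to Yoccoz \cite{yoccoz2004some}.
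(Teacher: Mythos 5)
First, a point of framing: the paper does not prove this statement at all --- it is quoted verbatim as Theorem~A of \cite{bochi2009some} and used as a black box --- so your proposal can only be measured against Bochi--Gourmelon's argument, not against anything in this paper. Your direction $(1)\Rightarrow(2)$ is correct and standard (Courant--Fischer applied to $E_x$ and $F_x$, uniform equivalence of metrics over a compact base). Your plan for $(2)\Rightarrow(1)$ also starts along the right lines --- building $E(x)$ as the limit of top-$k$ left singular subspaces of $\mc{A}(n,f^{-n}x)$ and $F(x)$ from the inverse cocycle, with exponential convergence and invariance coming from a Davis--Kahan-type perturbation lemma --- which is indeed the general strategy of \cite{bochi2009some}.

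However, the decisive step is missing, and the fix you gesture at does not work as described. You correctly identify that the telescoping construction only gives $\dist(F_x,R_n(x)^{\perp})=O(\tau^n)$, while your reduction demands closeness of order $\sigma_{k+1}(\mc{A}(n,x))/\sigma_1(\mc{A}(n,x))$, which can be exponentially smaller with an arbitrarily large rate. Your proposed remedy --- compare the singular subspaces of $\mc{A}(n,x)$ with those of $\mc{A}(m,\cdot)$ for $m$ a large fixed multiple of $n$ so that the Davis--Kahan factor $\tau^m$ ``beats'' the distortion --- cannot deliver this: the increment from time $j$ to $j+1$ already moves the bottom singular subspace by an amount of order $\tau^{j}$, so $\dist(S_m(x),S_n(x))$ and hence $\dist(F_x,S_n(x))$ saturate at $O(\tau^n)$ no matter how large $m$ is; choosing a longer comparison window gains nothing, and any attempt to compare across the window of length $m-n$ reintroduces products whose norms grow at the uncontrolled rate of the cocycle. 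So the two estimates you reduce everything to --- $m(\mc{A}(n,x)\vert_{E_x})\ge c\,\sigma_k$ and $\|\mc{A}(n,x)\vert_{F_x}\|\le c^{-1}\sigma_{k+1}$, and in particular the uniform transversality of the past-built bundle $E$ with the future-built bundle $F$ --- are exactly the content you would need to prove, and your sketch concedes they are ``the technical core of \cite{bochi2009some}'' without supplying an argument. (For what it is worth, the standard route tames the problem by first reducing index-$k$ domination to index-$1$ domination of the exterior-power cocycle $\Lambda^k\mc{A}$, where the required bound on the slow bundle is relative to $\sigma_1$ rather than $\sigma_{k+1}$ and the $O(\tau^n)$ closeness suffices, leaving the uniform transversality of the fast line with the slow hyperplane as the remaining nontrivial point.) As it stands, then, the proposal is a reasonable outline of the known proof but has a genuine gap precisely at the step that makes the theorem hard.
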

This is essentially the statement that the singular values of $\mc{A}(n,x)$ separate exponentially fast. 
The proof of Theorem \ref{thm:dominated_splitting_constant_data} will proceed by using a shadowing argument to check the criterion in Theorem \ref{thm:yoccoz_criterion}. We will shadow an arbitrary trajectory by a periodic trajectory and then compare the two trajectories for a small amount of time near the point where they are closest. Due to the tight constraints on the periodic data, this yields usable estimates on the growth of singular values along that arbitrary trajectory.

\

\noindent \textbf{Acknowledgements:} The authors are grateful to Daniel Mitsutani for many helpful discussions. The authors are also grateful to Dmitry Dolgopyat and Amie Wilkinson for helpful comments on the manuscript. The authors are grateful to the referee for many valuable comments and suggestions. The first author was supported by the National Science Foundation under Award No.~DMS-2202967. The second author was partially supported by NSF award DMS-2247747.

After the paper was finished, the authors learned from Jairo Bochi during a visit to Penn State that Misha Guysinsky announced a similar result some years ago for $\SL(2,\R)$ cocycles \cite[Thm.~1.2]{sadovskaya2013cohomology}. We are grateful to Misha Guysinsky for subsequent discussion, which revealed that he has some different techniques that yield similar results and may be used to address a related question of Velozo Ruiz \cite{velozo2020characterization} about cocycles that are close to fiber bunched \cite{guysinsky2023personal}.

\section{Preliminaries}

In this section we review some standard definitions and set notation that will be used through the rest of the paper.

\subsection{Subshifts of finite type}

We now give some definitions concerning subshifts of finite type. For a natural number $m$, we let $Q$ be an $m\times m$ matrix with entries $\{0,1\}$. We may consider the space of all bi-infinite strings of symbols $\{1,\ldots,m\}^{\Z}$. Then we restrict to the subspace only allowing the substrings admitted by $Q$. For $\omega\in \{1,\ldots,m\}^{\Z}$, we write its elements as $(\ldots, \omega_{-1},\omega_0,\omega_1,\ldots)$. A \emph{subshift of finite type} or SFT is the subspace:
\[
\Sigma =\{\omega\in \{1,\ldots, m\}^{\Z}: Q_{\omega_i \omega_{i+1}}=1\}.
\]
The dynamics on this subspace is the left shift $\sigma$, defined by
\[
(\sigma(\omega))_i=\omega_{i+1}.
\]
We require that for every pair $(i,j)$ there is some $\ell_{ij}$ with $Q^{l_{ij}}_{ij}>0$. This is equivalent to $(\Sigma,\sigma)$ being transitive.

There is also a natural metric on $\Sigma$. Write
\[
N(\omega,\eta)=\min \{\abs{n}:\omega_n\neq \eta_n\},
\]
then for $\omega\neq \eta$ we define the metric:
\[
d(\omega,\eta)=e^{-N(\omega,\eta)}.
\]

\begin{remark}\label{rem:quick_closing}
The most important property of the shift that we will use is that there exists some uniform $\ell$ such that if $\omega=(\omega_0,\ldots,\omega_n)$ is any valid finite string, then there exists a finite string $\eta$ of length at most $\ell$ such that $\omega\eta$ is valid, and, if repeated cyclically, defines a valid periodic word in $\Sigma$. This follows because transitivity of the SFT implies that there is a finite word going from any symbol to any other symbol.
\end{remark}

We now introduce linear cocycles. Suppose that $(X,d)$ is a compact metric space and that $\mc{E}$ is a vector bundle over $X$. If $\sigma\colon X\to X$ is a homeomorphism of $X$, then a \emph{linear cocycle} over $\sigma$ is a vector bundle map $\mc{A}\colon \mc{E}\to \mc{E}$ that fibers over $\sigma$. We write 
\begin{equation}\label{defn:cocycle_iterate}
\mc{A}(n,x)\colon \mc{E}_x\to \mc{E}_{\sigma^n(x)},
\end{equation}
for the associated linear map induced by the $n$th iterate of $\mc{A}$. 

In this paper, we consider cocycles over SFTs. In this context, a cocycle is defined by a map $A\colon \Sigma\to \GL(d,\R)$. We always consider the case when this map is H\"older continuous. In a natural way, the map $A$ defines a linear cocycle on the trivial bundle $\mc{A}\colon \Sigma\times \R^d\to \Sigma\times \R^d$. We note that for the applications in this paper that we only ever need to consider trivial bundles.  Calling both $A$ and $\mc{A}$ a ``cocycle" is a standard abuse of terminology. One of the main properties that we use is summarized in the following remark.

We will also need to consider ``H\"older continuous" vector bundles. 
While one could formulate the theory in extreme generality, one can also formulate the theory in a more basic way. For a trivial bundle $X\times \R^d$, it is easy to see what a H\"older section of this bundle is. Hence it is natural to define a \emph{H\"older vector bundle} over a metric space $X$ to be a vector bundle that embeds as a H\"older subbundle of $X\times \R^d$. More generally, one can define a H\"older bundle using H\"older  transition functions, although in this case it is more subtle to determine precisely the meaning of the bundle being $\beta$-H\"older because the composition of $\beta$-H\"older functions need not be $\beta$-H\"older. For a detailed discussion of H\"older vector bundles over metric spaces see \cite[Sec.~2.2]{bochi2019extremal}.

\begin{remark}\label{rem:holder_closeness}
If $A$ is a $\beta$-H\"older cocycle, then there exists $C$ such that if $\omega,\eta\in \Sigma$, and then if for all $\abs{i}\le n$, $\omega_i=\eta_i$, then 
\[
\|A(\omega)-A(\eta)\|<Ce^{-\beta n}, 
\]
which follows from the definition of H\"older continuity because $d(\omega,\eta)\le e^{-n}$.
\end{remark}

\subsection{Periodic data}
For a cocycle $\mc{A}$ over a homeomorphism $\sigma\colon X\to X$ on a vector bundle $\mc{E}$, one may consider the return map $\mc{A}(n,p)$ over any periodic point of period $n$. Typically, when one speaks of the ``periodic data" of this cocycle, one is referring to the map that associates each  periodic point $p$ of period $n$ with the conjugacy class of the matrix $\mc{A}(n,p)$ in $\GL(d,\R)$. In our case, we consider a slightly more general notion.

\begin{definition}\label{defn:constant_periodic_data}
    Given a list of real numbers $\lambda_1\ge \lambda_2\ge \cdots \ge \lambda_d$, we say that a cocycle $\mc{A}$ has periodic data with \emph{constant exponents} of type $(\lambda_1,\ldots,\lambda_d)$, if the following holds for each periodic point $p^n$ of period $n$:

    If we order the eigenvalues $\alpha_i$ of $\mc{A}(n,p^n)$ according to their moduli and write them according to their algebraic multiplicity (so that there are $d$ of them), then for each $1\le i\le d$ we have $\abs{\alpha_i}= e^{n\lambda_i}$.
\end{definition}

Note that this condition is more general than the condition of having all periodic data conjugate to the power of a single fixed matrix even in the case where the eigenspaces are $1$-dimensional.

    We will also consider periodic data that is tightly clustered around particular values. 

    \begin{definition}\label{defn:narrow_periodic_data}
    Given a list of numbers
$\lambda_1\ge \lambda_2\ge \cdots \ge \lambda_d$ and $\delta>0$, we say that the periodic data of a cocycle is \emph{$\delta$-narrow} around $\lambda_1\ge \cdots\ge \lambda_d$ if the following holds. For each periodic point $p^n$ of period $n$, if we order the eigenvalues $\alpha_i$ of $\mc{A}(n,p^n)$ as above, then for each $i$ we have
\[
e^{n(\lambda_i-\delta)}\le \abs{\alpha_i}\le e^{n(\lambda_i+\delta)}.
\]
\end{definition}
In particular, note that a cocycle with constant exponents is a cocycle that is $0$-narrow. This condition is similar to the condition of having ``narrow band spectrum" that appears in normal forms theory. See, for example, \cite{kalinin2020non}. Unlike in the normal forms theory however, we do not have any constraints due to resonances between the different $\lambda_i$.

\subsection{Singular values}

Recall that the singular values of a map $A\colon \R^d\to \R^d$ are the eigenvalues of the positive square-root of $A^*A$. We write the singular values of a linear operator $A\colon \R^d\to \R^d$ as 
\[
\sigma_1\ge \cdots\ge \sigma_d.
\]
If $A\colon \R^d\to \R^d$ is a linear map, then we denote by $A_k$ the induced map $A_k\colon \Lambda^k\R^d\to \Lambda^k\R^d$. A helpful fact that we use below is that 
\begin{equation}\label{eqn:singular_values_exterior_power}
\|A_k\|=\sigma_1(A)\cdots\sigma_k(A).
\end{equation}
We will use the following standard estimate on the perturbation of singular values. 
\begin{proposition}\label{prop:singular_values_perturbation}
\cite{stewart1979note}
Fix $d\ge 1$ then there exists $C>0$ such that if $A\colon \R^d\to \R^d$ is a linear map and $\sigma_1\ge \cdots\ge\sigma_d$ are the singular values of $A$, then if $E\colon \R^d\to \R^d$ is a perturbation and we write the singular values of $A+E$ as $\sigma_1'\ge \cdots\ge \sigma_d'$, then for $1\le i\le d$, 
\[
\abs{\sigma_i-\sigma_{i}'}\le C\|E\|.
\]
\end{proposition}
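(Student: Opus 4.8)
The plan is to establish the stronger (and classical) fact that one may take $C=1$, independently of $d$: for every $i$ one has $\abs{\sigma_i(A)-\sigma_i(A+E)}\le\|E\|$. The tool is the Courant--Fischer min-max characterization of singular values. Applying the usual min-max principle to the positive semidefinite operator $B^{*}B$, whose eigenvalues are $\sigma_1(B)^2\ge\cdots\ge\sigma_d(B)^2$, and taking square roots, which is legitimate since $t\mapsto\sqrt{t}$ is increasing and hence commutes with $\max$ and $\min$, one obtains for any linear $B\colon\R^d\to\R^d$ that
\[
\sigma_i(B)=\max_{\substack{V\subseteq\R^d\\\dim V=i}}\ \min_{\substack{v\in V\\\|v\|=1}}\|Bv\|.
\]

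First I would fix an $i$-dimensional subspace $V\subseteq\R^d$ and a unit vector $v\in V$; the triangle inequality gives $\bigl|\,\|(A+E)v\|-\|Av\|\,\bigr|\le\|Ev\|\le\|E\|$, so in particular $\|(A+E)v\|\le\|Av\|+\|E\|$ and $\|Av\|\le\|(A+E)v\|+\|E\|$. Taking the minimum over unit $v\in V$ in the first inequality, and then the maximum over $i$-dimensional $V$, the displayed formula yields $\sigma_i(A+E)\le\sigma_i(A)+\|E\|$; running the same argument with the roles of $A$ and $A+E$ exchanged (i.e. replacing $E$ by $-E$) gives $\sigma_i(A)\le\sigma_i(A+E)+\|E\|$. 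Combining the two bounds proves $\abs{\sigma_i(A)-\sigma_i(A+E)}\le\|E\|$, so the proposition holds with $C=1$ for every $d$.

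There is essentially no obstacle here: this is a form of Weyl's perturbation inequality for singular values, and the only points requiring a word of care are recording the correct max-min formula and observing that the complementary estimate $\|(A+E)v\|\ge\|Av\|-\|E\|$ is harmless even when its right-hand side is negative, since all $\sigma_i$ are nonnegative and hence the max-min quantity is automatically $\ge 0$. If one prefers to invoke eigenvalue perturbation directly rather than redo the min-max argument, an equivalent route is to pass to the Hermitian dilation $\widehat B=\left(\begin{smallmatrix}0&B\\B^{*}&0\end{smallmatrix}\right)$, whose $2d$ eigenvalues are $\pm\sigma_1(B),\ldots,\pm\sigma_d(B)$ and which satisfies $\|\widehat B\|=\|B\|$, and then apply Weyl's inequality $\abs{\mu_j(\widehat A)-\mu_j(\widehat{A+E})}\le\|\widehat E\|=\|E\|$ for the ordered eigenvalues of Hermitian matrices; this reproduces the same conclusion. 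Either way, the constant $C$ can be taken equal to $1$, which is all that is needed in the sequel.
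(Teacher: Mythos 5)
Your proof is correct, and in fact it establishes the sharper statement with $C=1$, which is the classical Weyl--Mirsky perturbation bound for singular values. The paper itself offers no argument here — it simply cites Stewart's note, which records exactly this kind of inequality — so your self-contained derivation via the Courant--Fischer max-min formula for $\sigma_i(B)$ (or equivalently via the Hermitian dilation and Weyl's inequality for eigenvalues) is entirely in the spirit of the cited reference and fully adequate for the use made of the proposition in the sequel, where only the existence of some dimension-dependent constant $C$ is needed.
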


\subsection{Foliations}
In this paper we use the standard terminology for foliations. We say that a topological foliation of a smooth manifold $M$ has \emph{uniformly $C^{r}$ leaves} if the leaves of the foliation are $C^r$ and the $r$-jet of the foliation varies continuously. The same definition applies analogously for foliations that have uniformly $C^{1+\text{H\"older}}$ leaves. For more details, see \cite{pugh1997holder}. If we have two foliations $\mc{F}$ and $\mc{G}$ of a manifold $M$, then we say that a homeomorphism $h$ \emph{intertwines} $\mc{F}$ and $\mc{G}$ if it carries each leaf of $\mc{F}$ homeomorphically onto a leaf of $\mc{G}$. In other words, it is an isomorphism of topologically foliated manifolds.

\subsection{Anosov diffeomorphisms}

We say that a diffeomorphism $f\colon M\to M$ is Anosov if the tangent bundle $TM$ has a hyperbolic $Df$-invariant splitting. In this paper, we consider even finer splittings of $TM$. For an Anosov diffeomorphism $f$, we will often deal with the case that $f$ has a dominated splitting into many different subbundles, which we write as:
\begin{equation}\label{eqn:ordered_splitting}
E^{s,f}_{l}\oplus \cdots \oplus E^{s,f}_1\oplus E^{u,f}_1\oplus \cdots \oplus E^{u,f}_k,
\end{equation}
where the norm of $Df$ on each subbundle increases from left to right and $l=\dim E^s$ and $k=\dim E^u$, the dimensions of the stable and unstable subspaces of $f$, respectively.

For an Anosov automorphism $L\colon M\to M$, there is a dominated splitting into bundles corresponding to the distinct moduli of the eigenvalues of $L$. As in equation \eqref{eqn:ordered_splitting}, write $E^{s,L}_i$, $1\le i\le \dim E^s$ and $E^{u,L}_j$, $1\le j \le \dim E^u$ for the bundles in this $DL$-invariant splitting. From \cite[Sec.~2.1.1]{dewitt2021local}, it follows that there exists a $C^1$ small neighborhood $\mc{U}$ of $L$ such that the following holds. For each $f\in \mc{U}$, there is a $Df$-invariant splitting of $TM$ into H\"older continuous subbundles $E^{s,f}_i$, $1\le i\le \dim E^s$, and $E^{u,f}_j$, $1\le j \le \dim E^u$, such that each of these bundles is the same dimension as the corresponding bundle for $L$. In the case that $L$ is an Anosov automorphism of a torus, it follows that each of the bundles $E_i^{u,f}$ and $E_i^{s,f}$ integrates to a foliation. 
This follows from the perturbative theory of Hirsch-Pugh-Shub \cite{hirsch1977invariant}, which says that the weak bundle $E^{wu,f}_i=E_1^{u,f}\oplus \cdots \oplus E_i^{u,f}$ integrates to a uniformly $C^{1+\text{H\"older}}$ foliation, which we denote by $\mc{W}^{wu,f}_i$. Further, as every strong distribution integrates, $E_i^{u,f}\oplus \dots \oplus E_k^{u,f}$ also integrates to a foliation. We call this foliation $\mc{W}^{uu,f}_i$. By intersecting these two foliations, one obtains that $E_i^{u,f}$ integrates to a foliation with uniformly $C^{1+\text{H\"older}}$ leaves, which we denote by $\mc{W}^{u,f}_i$. Later we will see that a periodic data assumption is enough to conclude that these foliations exist even for maps that are not perturbations of linear maps.

In order to state our results in higher dimension, we need to define the Property $\mathcal A$ that appears in \cite[p.~647]{gogolev2008smooth}. Given a set $B$ and a foliation $\mc{F}$, we define:
\begin{equation}
\mc{F}(B)=\bigcup_{x\in B} \mc{F}(x).
\end{equation}

\noindent We say that a foliation is \emph{transitive} if it has a dense leaf. We say that a foliation is \emph{minimal} if every leaf is dense. We say that a foliation $\mc{F}$ of a manifold $M$ is \emph{tubularly minimal} if for each open set $B$, $\overline{\mc{F}(B)}=M$. It is shown by Gogolev \cite[Prop.~4]{gogolev2008smooth} that transitivity and tubular minimality for foliations of compact manifolds are equivalent. Consequently, the following definition is equivalent to the definition of Property $\mathcal A$ in \cite{gogolev2008differentiable}.

\begin{definition}
Let $\mc{U}$ be the neighborhood of $L\colon \mathbb{T}^d\to \mathbb{T}^d$ constructed above such that each $f\in \mc{U}$ has a dominated splitting corresponding to that of $L$ as well as foliations tangent to each bundle in this splitting. Then we say that an Anosov diffeomorphism $f$ has \emph{Property $\mathcal A$} if for any $1\le i\le \dim E^s-1$ and $1\le j\le \dim E^u-1$ the foliations $\mc{W}^{s,f}_i$ and $\mc{W}^{u,f}_j$ are transitive.
\end{definition}

\noindent While it is not known how ubiquitous Property $\mathcal A$ is, there are many diffeomorphisms that satisfy it. For instance, any irreducible hyperbolic toral automorphism satisfies Property $\mathcal A$~\cite[Prop.~6]{gogolev2008smooth}.
\section{Proof of the existence of a splitting}

In this section we prove Theorem \ref{thm:dominated_splitting_constant_data}.
We will use the following proposition, which is due to Kalinin \cite[Thm.~1.3]{kalinin2011livsic}.
\begin{proposition}\label{prop:maximum_norm_est_kalinin}
Suppose that $A$ is an $\GL(d,\R)$ cocycle over a transitive, invertible, subshift of finite type that has constant periodic data with exponents $\lambda_1\ge\lambda_2\ge \cdots\ge \lambda_d$. Then for all $\epsilon>0$ there exists $C_{\epsilon}$ such that for all $\omega\in \Sigma,n\in \N$,
\[
\|A^n(x)\|\le C_{\epsilon}e^{n(\lambda_1+\epsilon)}.
\]
\end{proposition}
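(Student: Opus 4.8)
The goal is to prove Proposition~\ref{prop:maximum_norm_est_kalinin}, which is stated as a consequence of Kalinin's work on the Liv\v{s}ic theorem for matrix cocycles \cite[Thm.~1.3]{kalinin2011livsic}. The plan is to reduce the statement to the (sub)multiplicative ergodic setup and invoke Kalinin's approximation result, which asserts that for a H\"older cocycle over a hyperbolic system the top Lyapunov exponent with respect to \emph{any} invariant measure can be approximated by Lyapunov exponents over periodic orbits. Here the periodic data is constant, so every periodic orbit contributes a top Lyapunov exponent exactly equal to $\lambda_1$; hence Kalinin's theorem forces the top Lyapunov exponent of $A$ with respect to every $\sigma$-invariant ergodic measure $\mu$ to satisfy $\lambda_1^+(\mu)\le \lambda_1$.

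The key steps, in order, are as follows. First I would recall that $\sup_\mu \lambda_1^+(\mu)$, the supremum over all $\sigma$-invariant Borel probability measures of the top Lyapunov exponent, equals the exponential growth rate $\lim_{n\to\infty}\frac1n\log\sup_{x}\|A^n(x)\|$; this is a standard consequence of Kingman's subadditive ergodic theorem together with the fact that $x\mapsto \log\|A^n(x)\|$ is continuous and the subadditive cocycle $\varphi_n(x)=\log\|A^n(x)\|$ has $\sup_\mu \lim \frac1n\int\varphi_n\,d\mu = \lim\frac1n\sup_x\varphi_n(x)$ for a uniquely ergodic-type/subadditive variational principle (the relevant statement is the subadditive variational principle for topological pressure, or more elementarily the fact that the joint spectral radius equals the supremum of Lyapunov exponents over invariant measures). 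Second, I would apply Kalinin's periodic approximation theorem: for each ergodic $\mu$ there is a sequence of periodic orbits whose top Lyapunov exponents converge to $\lambda_1^+(\mu)$; since every such periodic exponent equals exactly $\lambda_1$ by the constant periodic data hypothesis, we get $\lambda_1^+(\mu)=\lambda_1$, hence $\sup_\mu \lambda_1^+(\mu) = \lambda_1$. Third, combining the two, $\lim_{n\to\infty}\frac1n\log\sup_x\|A^n(x)\| = \lambda_1$, and then for every $\epsilon>0$ there is $N$ with $\sup_x\|A^n(x)\|\le e^{n(\lambda_1+\epsilon)}$ for $n\ge N$; absorbing the finitely many remaining $n<N$ into a constant $C_\epsilon$ finishes the proof.

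The main obstacle I anticipate is making precise the passage between "all periodic Lyapunov exponents equal $\lambda_1$" and "the uniform growth rate is at most $\lambda_1$", i.e.\ the direction that uses periodic orbits to bound growth along \emph{all} orbits. The naive bound only controls $\mu$-a.e.\ growth for each invariant $\mu$, which a priori leaves room for a lower-dimensional set of orbits with faster growth; one must know that the uniform (worst-case) exponential growth rate is attained by some invariant measure. This is exactly the content that Kalinin's Theorem~1.3 is designed to supply in the cocycle setting (it upgrades the measure-theoretic statement to a uniform one precisely because periodic data controls \emph{every} ergodic measure, and the uniform growth rate is itself realized by an ergodic measure via the subadditive variational principle). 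So in the write-up I would cite \cite[Thm.~1.3]{kalinin2011livsic} for the periodic approximation of $\lambda_1^+(\mu)$, cite the subadditive variational principle for the identity $\lim\frac1n\log\sup_x\|A^n(x)\|=\sup_\mu\lambda_1^+(\mu)$, and keep the remaining bookkeeping (choice of $N$, definition of $C_\epsilon$) to a couple of lines. One should also double-check the trivial edge cases: that $\Sigma$ is nonempty with periodic points dense (true for a transitive SFT, cf.\ Remark~\ref{rem:quick_closing}), so the hypothesis is nonvacuous, and that the constant $C_\epsilon$ may be taken $\ge 1$ so the estimate also holds trivially for $n=0$.
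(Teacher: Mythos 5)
Your proposal is correct and consistent with the paper's treatment: the paper offers no proof of this proposition, simply attributing it to Kalinin \cite[Thm.~1.3]{kalinin2011livsic}, and your derivation --- periodic approximation of Lyapunov exponents forcing $\lambda_1^+(\mu)\le\lambda_1$ for every ergodic $\mu$, combined with the subadditive variational principle identifying $\sup_\mu\lambda_1^+(\mu)$ with the uniform growth rate $\lim_n\frac1n\log\sup_x\|A^n(x)\|$ --- is exactly the standard route to that statement. The only point worth making explicit in a write-up is that constant periodic data controls eigenvalue moduli (hence the spectral radius of $A^n(p)$, hence the top Lyapunov exponent at $p$), not a priori the norm $\|A^n(p)\|$, which is why one must pass through Lyapunov exponents rather than norms at periodic points; your argument does this correctly.
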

The first step in the proof of Theorem \ref{thm:dominated_splitting_constant_data} is the following proposition. Note that the argument for the following proposition does not work if it is merely the case that the Lyapunov exponents of every measure are bounded away from zero rather than being close to constant.

\begin{proposition}\label{prop:lower_bound_norm}
Suppose that $A$ is a H\"older continuous $\GL(d,\R)$ cocycle with constant periodic data over a transitive shift with exponents 
\[
\lambda_1\ge \lambda_2\ge \cdots\ge  \lambda_{d-1}\ge \lambda_d.
\]
For every sufficiently small $\gamma>0$ and $\epsilon>0$, there exists $C>0$ such that if $p$ is a periodic point of period $n$ and $\gamma n\le i$, then 
\[
\|A^i(p)\|\ge Ce^{\gamma (\lambda_1-\epsilon)n}.
\]
\end{proposition}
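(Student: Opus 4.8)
The plan is to combine the eigenvalue constraint at the periodic orbit with Kalinin's upper bound, Proposition~\ref{prop:maximum_norm_est_kalinin}. Since $p$ has period $n$, the spectral radius of $A^n(p)$ equals $e^{n\lambda_1}$, so $\|A^n(p)\|\ge e^{n\lambda_1}$, and more generally $\|A^{qn}(p)\|=\|(A^n(p))^q\|\ge \rho\big((A^n(p))^q\big)=e^{qn\lambda_1}$ for every integer $q\ge 1$. The idea is to peel off the first $i$ iterates: for $q=\lceil i/n\rceil$ set $r=qn-i$, so that $0\le r<n$; the cocycle identity $A^{qn}(p)=A^{r}(\sigma^i p)\,A^i(p)$ together with submultiplicativity of the operator norm gives $e^{qn\lambda_1}\le \|A^{r}(\sigma^i p)\|\cdot\|A^i(p)\|$. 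Since $r<n$ is small compared to $n$, the factor $\|A^{r}(\sigma^i p)\|$ contributes only a sub-extensive error once estimated by Kalinin's bound.

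Carrying this out: fix a small auxiliary $\epsilon'>0$ to be chosen at the end. By Proposition~\ref{prop:maximum_norm_est_kalinin} applied at the point $\sigma^i p$ we have $\|A^{r}(\sigma^i p)\|\le C_{\epsilon'}e^{r(\lambda_1+\epsilon')}$, and hence
\[
\|A^i(p)\|\ \ge\ \frac{e^{qn\lambda_1}}{C_{\epsilon'}\,e^{r(\lambda_1+\epsilon')}}\ =\ C_{\epsilon'}^{-1}\,e^{(qn-r)\lambda_1-r\epsilon'}\ =\ C_{\epsilon'}^{-1}\,e^{i\lambda_1-r\epsilon'}\ \ge\ C_{\epsilon'}^{-1}\,e^{i\lambda_1-n\epsilon'},
\]
using $qn-r=i$ and $r<n$. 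Now, since $\lambda_1\ge 0$ and $i\ge\gamma n$, we get $i\lambda_1\ge\gamma n\lambda_1$, so $\|A^i(p)\|\ge C_{\epsilon'}^{-1}e^{\gamma n\lambda_1-n\epsilon'}$; choosing $\epsilon'=\gamma\epsilon$ yields $\|A^i(p)\|\ge C_{\gamma\epsilon}^{-1}\,e^{\gamma(\lambda_1-\epsilon)n}$, which is the claim with $C=C_{\gamma\epsilon}^{-1}$. (If one only needs the range $\gamma n\le i\le n$, one may simply take $q=1$, $r=n-i$, and skip the passage to powers. The sign condition $\lambda_1\ge 0$ is the only place the magnitude of the top exponent is used, and it is the regime relevant to the construction of the dominated splitting.)

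The step that really requires the full strength of the hypothesis — and not merely that all periodic or Lyapunov exponents are bounded away from zero, as the paragraph preceding the statement warns — is the appeal to Proposition~\ref{prop:maximum_norm_est_kalinin}: it furnishes the bound $\|A^m(x)\|\le C_\epsilon e^{m(\lambda_1+\epsilon)}$ at \emph{every} point $x$, in particular along the discarded tail $\sigma^i p,\dots,\sigma^{i+r-1}p$, with a constant independent of the point and of the length $m$. If instead $\tfrac1m\log\|A^m(\cdot)\|$ could approach values above $\lambda_1$ along suitable orbit segments, discarding the tail would cost an extensive rather than sub-extensive amount and the estimate would break. Everything else is bookkeeping: the choice $q=\lceil i/n\rceil$ keeps $r<n$, so the tail costs only $O(n\epsilon')$ in the exponent, which is absorbed by taking $\epsilon'=\gamma\epsilon$ and charging the resulting slack against the $-\gamma\epsilon n$ already present in the target bound.
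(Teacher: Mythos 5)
Your argument is correct and is essentially the paper's: split the periodic product at time $i$, lower-bound the full product by $e^{qn\lambda_1}$ using the constant periodic data, and upper-bound the discarded tail by Kalinin's estimate (Proposition \ref{prop:maximum_norm_est_kalinin}); the paper simply takes $q=1$ and works with $A^n(p)=A^{n-i}(\sigma^i(p))A^i(p)$ for $\gamma n\le i\le n$, which is the only range it subsequently uses. The sign caveat you flag ($\lambda_1\ge 0$, or alternatively an upper bound $i\le\gamma n+\ell$ as in the application to exterior powers, where the top exponent $\lambda_1+\cdots+\lambda_{k-1}$ may be negative) is a genuine feature of the statement as written, and the paper's own proof makes the same tacit assumption when it replaces $e^{(n-i)(\lambda_1+\epsilon)}$ by $e^{(1-\gamma)n(\lambda_1+\epsilon)}$.
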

\begin{proof}
Suppose that $p$ is a periodic point of period $n$. Then with $i$ as in the statement of the proposition we may write:
\begin{equation}\label{eqn:cocycle_two_pieces123}
A^n(p)=A^{n-i}(\sigma^i(p))A^i(p).
\end{equation}
Because the periodic data are constant we have that $\|A^n(p)\|\ge e^{\lambda_1 n}$. Because $i\ge \gamma n$,
we also have from Proposition \ref{prop:maximum_norm_est_kalinin} that for all $\epsilon>0$ there exists $C_{\epsilon}$ such that
\[
\|A^{n-i}(\sigma^i(p))\|\le C_{\epsilon} e^{(1-\gamma)(\lambda_1+\epsilon)n}. 
\]
Thus applying these estimates to \eqref{eqn:cocycle_two_pieces123} we see that:
\begin{align*}
e^{\lambda_1 n}\le \|A^i(p)\| C_{\epsilon}e^{(1-\gamma)(\lambda_1+\epsilon)n}.
\end{align*}
Thus
\begin{align}
\|A^i(p)\|&\ge C_{\epsilon}^{-1}e^{n\lambda_1-(1-\gamma)(\lambda_1+\epsilon)n}\\
&\ge C_{\epsilon}^{-1}e^{(\gamma\lambda_1-(1-\gamma)\epsilon )n}.
\end{align}
Up to this point, we have no relationship between $\epsilon$ and $\gamma$, hence replacing $\epsilon$ with $\gamma \epsilon/(1-\gamma)$, we obtain 
\[
\|A^i(p)\|\ge C_{\epsilon}e^{\gamma(\lambda_1-\epsilon)n},
\]
as desired.
\end{proof}

Next, by considering the action of the cocycle on exterior powers, we are able to use the previous proposition to gain information about other singular values.

\begin{proposition}\label{prop:singular_values_separate}
Suppose that $A$ is a H\"older continuous $\GL(d,\R)$ cocycle with constant periodic data over a transitive shift with exponents 
\[
\lambda_1\ge \lambda_2\ge \cdots \ge \lambda_{d-1}\ge \lambda_d.
\]
Then for every sufficiently small $\gamma,\epsilon>0$ and a fixed $\ell\in \N$, there exists $C_{\gamma,\epsilon}\ge 1$ such that if $p$ is a periodic point of period $n$ and $\gamma n\le i\le \gamma n+\ell$, then 
\begin{equation}\label{eqn:bound_on_sv_k}
C_{\gamma,\epsilon}^{-1}e^{\gamma n(\lambda_k-\epsilon)}\le \sigma_k(A^i(p))\le C_{\gamma,\epsilon}e^{\gamma n(\lambda_k+\epsilon)}.
\end{equation}
\end{proposition}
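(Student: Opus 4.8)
The plan is to reduce to the operator-norm estimates already established, by passing to exterior powers. For $0\le j\le d$ write $\Lambda_j=\lambda_1+\cdots+\lambda_j$, with $\Lambda_0=0$. The preliminary observation is that for each $j$ the exterior power cocycle $\Lambda^jA\colon\Sigma\to\GL(\binom{d}{j},\R)$, defined fiberwise by $x\mapsto\Lambda^j(A(x))$, is again H\"older continuous (it is the composition of the polynomial map $\Lambda^j$ with the H\"older cocycle $A$), is a cocycle over the same transitive invertible SFT, and has constant periodic data. For the last point, $\Lambda^j$ is a homomorphism $\GL(d,\R)\to\GL(\binom{d}{j},\R)$, so $(\Lambda^jA)^n(p)=\Lambda^j(A^n(p))$, and the eigenvalues of $\Lambda^j(M)$ are exactly the products $\prod_{s\in S}\alpha_s$ over the size-$j$ subsets $S$ of the multiset $\{\alpha_1,\ldots,\alpha_d\}$ of eigenvalues of $M$. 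Hence the moduli of the eigenvalues of $(\Lambda^jA)^n(p)$ are the numbers $e^{n\sum_{s\in S}\lambda_s}$, $|S|=j$, which depend only on $(\lambda_1,\ldots,\lambda_d)$; so $\Lambda^jA$ has constant periodic data, with largest exponent $\Lambda_j$.

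First I would apply Propositions \ref{prop:maximum_norm_est_kalinin} and \ref{prop:lower_bound_norm} to the cocycles $\Lambda^{k-1}A$ and $\Lambda^kA$, using the identity $\|(\Lambda^jA)^i(p)\|=\sigma_1(A^i(p))\cdots\sigma_j(A^i(p))$ from \eqref{eqn:singular_values_exterior_power}. Proposition \ref{prop:maximum_norm_est_kalinin} applied to $\Lambda^jA$ yields, for every $x$ and $i$, $\sigma_1(A^i(x))\cdots\sigma_j(A^i(x))\le C_\epsilon e^{i(\Lambda_j+\epsilon)}$; since $\gamma n\le i\le\gamma n+\ell$ with $\ell$ fixed, $e^{i(\Lambda_j+\epsilon)}=e^{\gamma n(\Lambda_j+\epsilon)}e^{(i-\gamma n)(\Lambda_j+\epsilon)}\le e^{\gamma n(\Lambda_j+\epsilon)}e^{\ell|\Lambda_j+\epsilon|}$, so the bound becomes $\le C'_{\gamma,\epsilon}e^{\gamma n(\Lambda_j+\epsilon)}$. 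Proposition \ref{prop:lower_bound_norm} applied to $\Lambda^jA$, with $\gamma$ and $\epsilon$ chosen small enough to be admissible simultaneously for $j=k-1$ and $j=k$, yields for $i\ge\gamma n$ the bound $\sigma_1(A^i(p))\cdots\sigma_j(A^i(p))\ge C''_{\gamma,\epsilon}e^{\gamma n(\Lambda_j-\epsilon)}$.

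It then remains to isolate $\sigma_k$. For the lower bound in \eqref{eqn:bound_on_sv_k} I would divide the lower bound for $\sigma_1\cdots\sigma_k$ by the upper bound for $\sigma_1\cdots\sigma_{k-1}$ (the empty product $1$ when $k=1$), obtaining $\sigma_k(A^i(p))\ge Ce^{\gamma n(\Lambda_k-\Lambda_{k-1}-2\epsilon)}=Ce^{\gamma n(\lambda_k-2\epsilon)}$; for the upper bound I would divide the upper bound for $\sigma_1\cdots\sigma_k$ by the lower bound for $\sigma_1\cdots\sigma_{k-1}$, obtaining $\sigma_k(A^i(p))\le C'e^{\gamma n(\lambda_k+2\epsilon)}$. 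Rerunning the argument with $\epsilon/2$ in place of $\epsilon$ and taking $C_{\gamma,\epsilon}=\max\{C',C^{-1},1\}$ then gives \eqref{eqn:bound_on_sv_k}.

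I do not expect a real obstacle. The only points requiring attention are checking that $\Lambda^jA$ genuinely meets the hypotheses of Propositions \ref{prop:maximum_norm_est_kalinin} and \ref{prop:lower_bound_norm}---above all that its periodic data is constant, which is the linear-algebra fact about eigenvalues of exterior powers recalled above---and bookkeeping the innocuous constants produced by the length-$\ell$ window and by the successive relabelings of $\epsilon$.
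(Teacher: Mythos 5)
Your proposal is correct and rests on the same essential ingredients as the paper's proof: passing to the exterior power cocycles, checking they inherit constant periodic data, and dividing the norm bounds from Propositions \ref{prop:maximum_norm_est_kalinin} and \ref{prop:lower_bound_norm} using $\|A^i_j(p)\|=\sigma_1(A^i(p))\cdots\sigma_j(A^i(p))$. The one structural difference is in the lower bound on $\sigma_k$: the paper runs an induction on $k$, using the inductive upper bounds on each of $\sigma_1,\ldots,\sigma_{k-1}$ individually to divide out of $\|A^i_k(p)\|$, whereas you bound the product $\sigma_1\cdots\sigma_{k-1}=\|A^i_{k-1}(p)\|$ from above in one stroke by Proposition \ref{prop:maximum_norm_est_kalinin} applied to $\Lambda^{k-1}A$. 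This removes the need for induction altogether (the paper's upper-bound step is already non-inductive and identical to yours) and incurs a loss of only $2\epsilon$ rather than the paper's $(k-1)\epsilon$, so the final relabeling of $\epsilon$ is the same harmless bookkeeping in both arguments.
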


\begin{proof}
 We now proceed by induction on $k$ and verify equation \eqref{eqn:bound_on_sv_k}.
The base case when $k=1$ is immediate from Proposition \ref{prop:maximum_norm_est_kalinin} and Proposition \ref{prop:lower_bound_norm}. Now suppose that the result holds for $k-1$, then we show it holds for $k$.

We first check the upper bound in equation \eqref{eqn:bound_on_sv_k}.
We apply Proposition \ref{prop:lower_bound_norm} to the cocycle $A_{k-1}$ induced by $A$ on $\Lambda^{k-1} \R^d$. Hence for whatever sufficiently small $\gamma,\epsilon>0$ we choose there exists $C_{\gamma,\epsilon}>0$ such that for any periodic point $p$ of period $n$ and any $i\ge \gamma n$,  $i\le \gamma n+\ell$, we have that
\begin{equation}\label{eqn:k-1_power_lower}
\|A^i_{k-1}(p)\|\ge C_{\gamma,\epsilon} e^{\gamma n(\lambda_1+\cdots+\lambda_{k-1}-\epsilon)}.
\end{equation}
At the same time, by Proposition \ref{prop:maximum_norm_est_kalinin}, we have that for all $i\ge \gamma n$
\begin{equation}
\|A^i_k(p)\|\le C_{\epsilon}e^{i(\lambda_1+\cdots+\lambda_k+\epsilon)}.
\end{equation}
{In particular, for our fixed finite $\ell>0$ because $\abs{\gamma n-i}<\ell$, there exists $C'_{\epsilon}>0$ such that for any $\gamma n\le i\le \gamma n +\ell$, we have}
\begin{equation}\label{eqn:k-power_upper}
\|A^i_k(p)\|\le C_{\epsilon}'e^{\gamma n(\lambda_1+\cdots+\lambda_k+\epsilon)}.
\end{equation}
Recalling now that the norm of a linear map on the $k$-th exterior power is the product of the first $k$ singular values, we see that by dividing equation \eqref{eqn:k-power_upper} by equation \eqref{eqn:k-1_power_lower}, we see that there exists $C>0$ such that
\[
\sigma_{k}(A^i(p))\le Ce^{\gamma n (\lambda_k+2\epsilon)}.
\]

We now check the lower bound in Equation \eqref{eqn:bound_on_sv_k}.
First, we apply Proposition \ref{prop:lower_bound_norm} to get that 
\begin{equation}\label{eqn:upperboundsigma_1k}
\sigma_1\cdots \sigma_k=\|A^i_{k}(p)\|\ge e^{\gamma n(\lambda_1+\cdots+\lambda_{k}-\epsilon)}.
\end{equation}
But by the inductive hypothesis, we know that for $1\le j\le k-1$ that
\begin{equation}
\sigma_j(A^i(p))\le C e^{\gamma n (\lambda_j+\epsilon)}.
\end{equation}
Thus we obtain from equation \eqref{eqn:upperboundsigma_1k}, that for all $i\ge \gamma n$
\[
\sigma_k(A^i(p))\ge Ce^{\gamma n(\lambda_k-(k-1)\epsilon)}.
\]
This gives the second required bound and hence we have finished the induction. Since this is a finite induction we can redefine the initial $\eps$ so that all posited inequalities of Proposition~\ref{prop:singular_values_separate} are satisfied. This finishes the proof of the inequality \eqref{eqn:bound_on_sv_k}, so we are done by the first paragraph of the proof.
\end{proof}

We now use a shadowing argument to upgrade the above estimate for periodic points to an estimate along every trajectory.

\begin{proposition}\label{prop:constant_data_lower_bound_on_growth}
    Suppose that $A$ is a $\beta$-H\"older $\GL(d,\R)$ cocycle with constant periodic data over a transitive subshift of finite type $\Sigma$. 
  Suppose the periodic data has associated exponents
    \[
    \lambda_1\ge\lambda_2\ge \cdots\ge \lambda_{d-1}\ge \lambda_d.
    \]
    If $\lambda_k>\lambda_{k+1}$, then there exist $c,\tau>0$ such that for every $\omega\in \Sigma,n\in \N$, 
    \[
    \sigma_k(A^n(\omega))\ge ce^{\tau n}\sigma_{k+1}(A^n(\omega)).
    \]
\end{proposition}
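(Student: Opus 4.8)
The plan is to use the shadowing property of transitive subshifts of finite type (Remark~\ref{rem:quick_closing}) to replace an arbitrary trajectory by a nearby periodic one, apply Proposition~\ref{prop:singular_values_separate} along the periodic orbit, and then transfer the estimate back using the Hölder continuity of the cocycle together with the perturbation estimate for singular values, Proposition~\ref{prop:singular_values_perturbation}. Concretely, fix $\omega \in \Sigma$ and $n \in \N$; I want a lower bound on $\sigma_k(A^n(\omega))/\sigma_{k+1}(A^n(\omega))$. First I would pick a small fixed $\gamma \in (0,1)$ and set $i = \lceil \gamma N \rceil$ where $N = n + \ell$ and $\ell$ is the uniform closing length. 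Using the finite word $(\omega_0,\dots,\omega_{n})$, Remark~\ref{rem:quick_closing} produces a periodic point $p$ of period $N' \le N$ (I will arrange $N' \in [n, n+\ell]$, or handle the bounded discrepancy) whose orbit agrees with that of $\omega$ on a long central block of length comparable to $n$.

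The core comparison is the following. Write $A^n(\omega) = A^{n-m}(\sigma^m(\omega)) \cdot A^m(\omega)$ and similarly split $A^{N'}(p)$; the middle factor $A^m(\cdot)$ over a window where the two orbits $\eta$-shadow each other satisfies $\|A^m(\omega) - A^m(p)\| \lesssim e^{-\beta r}$ where $r$ is roughly the distance from the window to the point of closest approach — this is the standard telescoping estimate built from Remark~\ref{rem:holder_closeness} together with the uniform norm bound of Proposition~\ref{prop:maximum_norm_est_kalinin} (applied to both $A$ and $A^{-1}$, i.e.\ to $A_{d-1}$ via \eqref{eqn:singular_values_exterior_power}, which controls $m(A^m)$). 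Since along the periodic orbit $p$ we have by Proposition~\ref{prop:singular_values_separate} that $\sigma_j(A^i(p))$ is pinned in the window $C^{\pm 1} e^{\gamma N(\lambda_j \pm \epsilon)}$ for $j=1,\dots,d$, and in particular the ratio $\sigma_k(A^i(p))/\sigma_{k+1}(A^i(p)) \ge C^{-1} e^{\gamma N(\lambda_k - \lambda_{k+1} - 2\epsilon)}$ grows exponentially in $N$ once $\epsilon$ is chosen smaller than $(\lambda_k - \lambda_{k+1})/2$, Proposition~\ref{prop:singular_values_perturbation} lets me transfer a comparable (slightly degraded) lower bound on $\sigma_k/\sigma_{k+1}$ to $A^i(\omega)$, provided the additive error $\|E\|$ is exponentially smaller than $\sigma_{k+1}(A^i(\omega)) \asymp e^{\gamma N(\lambda_{k+1} \pm \epsilon)}$; this is exactly what the Hölder estimate gives, since the shadowing error decays like $e^{-\beta r}$ with $r$ of order $n$, i.e.\ of order $N/\gamma$ — so choosing $\gamma$ small compared to $\beta/|\lambda_{k+1}|$ makes the error negligible. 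Finally I upgrade from the fixed-window exponent $i \asymp \gamma N$ to general $n$: the cocycle has a dominated-type separation at the single scale $i$, and a standard iteration argument (cover $[0,n]$ by blocks of length $\asymp \gamma n$, or directly invoke that $\sigma_k/\sigma_{k+1}$ along $A^i(\sigma^{jm}(\omega))$ is bounded below for a full set of windows and multiply) promotes this to $\sigma_k(A^n(\omega)) \ge c e^{\tau n}\sigma_{k+1}(A^n(\omega))$ for all $n$, with $\tau$ a definite fraction of $\gamma(\lambda_k - \lambda_{k+1})$.

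The main obstacle is the bookkeeping in the shadowing step: one must ensure that the window on which $\omega$ and $p$ genuinely shadow each other is long enough (length $\gtrsim n$, not just $\gtrsim \gamma n$) so that the Hölder error at the window's location is $e^{-\beta n}$ and hence beats every relevant singular value ratio, while simultaneously the window must be positioned so that Proposition~\ref{prop:singular_values_separate} (which is stated for iterates of length between $\gamma n$ and $\gamma n + \ell$ \emph{starting at the periodic point}) actually applies — this forces me to choose the comparison point near the closest-approach point of the two orbits and to track carefully how the perturbation estimate degrades when propagated through the remaining $n - i$ steps. A secondary technical point is that Proposition~\ref{prop:singular_values_separate} controls $\sigma_k$ of $A^i(p)$ only for the \emph{first} $i$ iterates from $p$; to compare with $A^i(\sigma^m(\omega))$ for an interior window I either reparametrize $p$ so that its orbit starts at the right place (permissible, since any shift of a periodic point is again periodic of the same period and the exponents are unchanged), or I absorb the shift into the estimate. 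Modulo this care, the argument is a clean shadow-and-compare, and the hypothesis $\lambda_k > \lambda_{k+1}$ enters only to make the exponent $\gamma(\lambda_k - \lambda_{k+1} - 2\epsilon)$ strictly positive.
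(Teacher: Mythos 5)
Your overall strategy --- shadow $\omega$ by a periodic orbit, pin down the singular values along the periodic orbit via Proposition \ref{prop:singular_values_separate}, and transfer the estimate to $\omega$ via the H\"older bound and Proposition \ref{prop:singular_values_perturbation} --- is exactly the paper's. The gap is in your final ``upgrade'' step. Having compared the two orbits only over a window of length $i\asymp\gamma n$, you propose to pass from the gap $\sigma_k/\sigma_{k+1}$ at scale $\gamma n$ to the gap at scale $n$ by covering $[0,n]$ with blocks of length $\asymp\gamma n$ and ``multiplying''. Singular-value gaps are not supermultiplicative under composition: already $A=\diag(2,1)$ and $B=\diag(1,2)$ give $\sigma_1(BA)/\sigma_2(BA)=1$ although each factor has ratio $2$. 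Arranging for the dominant singular directions of consecutive blocks to stay aligned is precisely the content of constructing the dominated splitting, so this step is circular; and the single-scale cone-field criterion that might rescue it requires the gap to exceed a threshold growing like $\left(\sup\|A^{\pm1}\|\right)^{c\gamma n}$, which your estimate does not obviously achieve. Since Theorem \ref{thm:yoccoz_criterion} genuinely requires exponential separation at \emph{every} time $n$, this step cannot be waved away.

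The fix is a change of bookkeeping rather than a new idea, and it is what the paper does. For the target time $n$, do not take the shadowing periodic orbit to have period comparable to $n$; take it to agree with $\omega$ on the much longer block $[-\lceil\gamma^{-1}n\rceil,\lceil\gamma^{-1}n\rceil]$, so that its period is $\approx 2\gamma^{-1}n$. Then the window $[0,n]$ is simultaneously (a) a $\gamma$-fraction of the period, so Proposition \ref{prop:singular_values_separate} applies to $A^n(p)$ directly at time $n$, and (b) buried deep inside the shadowing region, so each one-step error is at most $Ce^{-\beta(\gamma^{-1}-1)n}$ and the accumulated error $\|A^n(\omega)-A^n(p)\|$ is exponentially small in $n$ (this is your constraint that $\gamma$ be small relative to $\beta$, which survives unchanged). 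Letting $n$ range over $\N$ then yields the conclusion at every time in one pass, with no iteration. A minor further remark: you do not need conorm control of $A^m$ via $A_{d-1}$ for the telescoping estimate; the uniform upper bound $\|A\|\le e^{\mu}$ suffices, since the error terms are only ever multiplied by forward products of $A$.
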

\begin{proof}
To begin we show that there exist $\ell, C_1>0$ such that for each $\omega\in \Sigma$ and $n\in \N$ we may find a periodic orbit $p^n(\omega)$ of period $2n+\ell_n$ where $0\le \ell_n\le \ell$ such that 
    \begin{equation}\label{eqn:approximating_eqn}
        \|A(\sigma^i(\omega))-A(\sigma^i(p_n))\|\le C_1 e^{-\beta(n-i)}.
    \end{equation}
To do this, we apply Remark \ref{rem:quick_closing} and let $\ell$ be as in that remark. Let $\omega^n$ denote the finite word $(\omega_{-n},\ldots,\omega_n)$. Then there exists a finite word $\eta_n$ such that $\omega_n\eta_n$ is a valid finite word, and so by repeating this word cyclically we obtain an admissible element of $\Sigma$. We then let $p^n$ be the cyclic word chosen in this way so that for $\abs{i}\le n$, $p^n_i=\omega_i$. In particular, this implies that for $\abs{i}\le n$, that $d(\sigma^i(p_n),\sigma^i(\omega))\le e^{-(n-i)}$. From Remark \ref{rem:holder_closeness} equation \eqref{eqn:approximating_eqn} now follows.

For the rest of the proof fix some number $\mu>0$ so that for all $\omega\in \Sigma$ we have that: 
\begin{equation}\label{eqn:crude_upper_bound}
\|A(\omega)\|\le e^{\mu}.
\end{equation}

We now choose constants $0<\epsilon$, $0<\gamma<1$, $0<\kappa$, $0<\epsilon_0$ such that the following inequalities all hold:
\begin{align}
-\epsilon_0&<-\gamma\abs{\lambda_{k}-\epsilon}\label{eqn:ineq1}\\
-\epsilon_0&<-\gamma\abs{\lambda_{k+1}+\epsilon}\label{eqn:ineq2}\\
\mu\gamma-\beta(1-\gamma)+\gamma \kappa&<-\epsilon_0\label{eqn:ineq3}\\
0&<\lambda_k-\lambda_{k+1}-2\epsilon\label{eqn:ineq4}.
\end{align}
To see that we may arrange for all of these inequalities to hold simultaneously, note that inequalities \eqref{eqn:ineq1}, \eqref{eqn:ineq2}, and \eqref{eqn:ineq3} are equivalent to
\begin{equation}
0<\gamma<\min\left\{\frac{\epsilon_0}{\abs{\lambda_{k}-\epsilon}},\frac{\epsilon_0}{\abs{\lambda_{k+1}-\epsilon}}, \frac{-\epsilon_0+\beta}{\mu+\beta+\kappa}\right\}.
\end{equation}
For sufficiently small $\epsilon_0,\epsilon>0$ it is straightforward to find a choice of  $0<\gamma<1$ and $0<\kappa$ such that first three inequalities hold. Ensuring that the final inequality holds as well is straightforward because it holds for all sufficiently small $\epsilon>0$.

In the rest of the proof, we will study the difference between the cocycle along the orbit of $\omega\in \Sigma$ and the cocycle along the orbit of $p^{n}$. Thus define a matrix $E_{n,i}(\omega)$ by
\begin{equation}
E_{n,i}(\omega)=A(\sigma^i(\omega))-A(\sigma^i(p_n(\omega))).
\end{equation}
For the rest of the proof, we will suppress the argument $\omega$ as all estimates are independent of the particular point under consideration.

Consider
\begin{align}\label{eqn:expression_for_product}
\prod_{i=0}^{\lfloor\gamma n\rfloor} A(\sigma^i(\omega))=\prod_{i=0}^{\lfloor \gamma n\rfloor} \left[ A(\sigma^i(p^n))+E_{i,n}\right].
\end{align}

We now need a claim.
\begin{claim}\label{claim:binom_estimate}
For any fixed $\kappa>0$ that there exists $C_{\kappa}>0$ such that ${n\choose k}\le C_{\kappa}e^{nk\kappa}$.
\end{claim}
\begin{proof}
    To begin, use the standard estimate that ${n\choose k}\le n^k/k!$. Then to see that $n^k/k!\le C_{\kappa}\exp(\kappa n k)$ take logarithms. Thus we are checking that $k\ln (n)-\ln(k!)\le \kappa n k +\ln(C_{\kappa})$ for some constant $C_{\kappa}$. As long as $\kappa n>\ln(n)$ this inequality holds with $\ln(C_{\kappa})=0$. Hence we need only consider $n$ less than some fixed $N_{\kappa}$. But for the finite number of $n\le N_{\kappa}$, the $\ln(k!)$ term grows faster than the $k\ln(n)$ term. Thus for each such $n$, there are only finitely many $k$ such that the inequality does not hold when $\ln(C_{\kappa})=0$. Thus we see that for $C_{\kappa}=1$, there are only finitely many integer pairs $(n,k)$ such that the estimate ${n\choose k}\le C_{\kappa}e^{nk\kappa}$ fails. Hence by increasing $C_{\kappa}$, we obtain the result.
    \end{proof}

If we expand the right hand side of \eqref{eqn:expression_for_product} as a product, it becomes the sum of $2^{\lfloor \gamma n\rfloor}$ different terms. We then group the terms involving exactly $k$ copies of the error terms  $E_{i,n}$. Call the sum of all terms with $k$ factors of the form $E_{i,n}$ by $B_{k,n}$. Then we may write:
\[
\prod_{i=0}^{\lfloor \gamma n\rfloor} \left[ A(\sigma^i(p^n))+E_{i,n}\right]=\prod_{i=0}^{\lfloor \gamma n\rfloor}  A(\sigma^i(p^n))+\sum_{k=1}^{\lfloor \gamma n\rfloor} B_{k,n}
\]

We now estimate the terms $B_{k,n}$. The term $B_{k,n}$ is the sum of ${\lfloor \gamma n\rfloor\choose k}$ summands each involving $k$ of the terms $E_{i,n}$ where $0\le i\le \lfloor \gamma n\rfloor$. By equation \eqref{eqn:approximating_eqn}, because $1\le i\le \lfloor \gamma n\rfloor$, it follows that $\|E_{i,n}\|\le C_1e^{-\beta(1-\gamma)n}$. Each individual summand contributing to $B_{k,n}$ is the product of $k$ $E_{i,n}$ terms as well as $\lfloor \gamma n\rfloor-k$ terms of the form $A(\sigma^i(p^n))$. Thus as $B_{k,n}$ is the sum of ${\lfloor \gamma n\rfloor\choose k}$ such terms, it then follows from equations \eqref{eqn:crude_upper_bound} and \eqref{eqn:approximating_eqn} that there exists $C_2$ such that:
\[
\|B_{k,n}\|\le C_2{\lfloor n\gamma\rfloor\choose k}e^{-k\beta(1-\gamma)n}e^{\mu (\gamma n-k)},
\]
for some $C_2$ depending only on $\Sigma$ and the $\beta$-H\"older norm of $A$. From Claim \ref{claim:binom_estimate}, we see that for any $\kappa>0$, there exists $C_{\kappa}$ such that:
\[
\|B_{k,n}\|\le C_2C_{\kappa}e^{\gamma n k \kappa} e^{-k\beta(1-\gamma)n}e^{\mu (\gamma n-k)}.
\]
Thus 
\begin{align}
\left\|\sum_{k=1}^{\lfloor \gamma n\rfloor} B_{k,n}\right\|&\le \sum_{k=1}^{\lfloor \gamma n\rfloor}C_2C_{\kappa}e^{\gamma n k \kappa} e^{-k\beta(1-\gamma)n}e^{\mu (\gamma n-k)}\\
&\le C_2C_{\kappa}e^{\mu \gamma n}\sum_{k=1}^{\infty}e^{-(\mu+\beta(1-\gamma)n-\gamma n\kappa)k}\\
&\le C_2C_{\kappa}e^{\mu \gamma n}\frac{e^{-(\mu+\beta(1-\gamma)n-\gamma n\kappa)}}{1-e^{-(\mu+\beta(1-\gamma)n-\gamma n\kappa)}}.\label{eqn:last_upper_bound_a}
\end{align}

Thus from our choice of constants, \eqref{eqn:ineq3} is satisfied, so it immediately follows from inequality \eqref{eqn:last_upper_bound_a} that there exists $D_1>0$ such that 
\begin{equation}
    \left\|\sum_{k=1}^{\lfloor \gamma n\rfloor} B_{k,n}\right\|\le D_1 e^{-\epsilon_0n}.
\end{equation}

Thus we have obtained that:
\begin{equation}\label{eqn:final_inequality}
\prod_{i=0}^{\lfloor\gamma n\rfloor}A(\sigma^i(\omega))=E_n+\prod_{i=0}^{\lfloor\gamma n\rfloor} A(\sigma^i(p^n)),
\end{equation}
where $\|E_n\|\le D_1e^{-\epsilon_0 n}$, where $D_1$ depends only on the cocycle and our choices of the parameters $\gamma,\kappa,\epsilon,\epsilon_0$. 

Note now that for each $m\in \mathbb{N}$ that there exists some $n$ such that $\lfloor \gamma n\rfloor=m$. Thus there exists $D_2>0$ such that for each $n\in \mathbb{N}$ there exists $E_n'$ such that:
\begin{equation}
\prod_{i=0}^{n}A(\sigma^i(\omega))=E_n'+\prod_{i=0}^{n} A(\sigma^i(p^{\lceil\gamma^{-1}n\rceil})),\text{ where } \|E_n'\|\le D_2 e^{-\epsilon_0\gamma^{-1}n}.
\end{equation}

Thus it follows from Proposition \ref{prop:singular_values_perturbation}, that for all $n\in \N$,
\begin{align}
\abs{\sigma_k(A^n(\omega))-\sigma_k(A^n(p^{\lceil\gamma^{-1}n\rceil}))}&\le D_2e^{-\epsilon_0\gamma^{-1}n}\label{eqn:closeness_of_sigma_k},\\ 
\abs{\sigma_{k+1}(A^n(\omega))-\sigma_{k+1}(A^n(p^{\lceil\gamma^{-1}n\rceil}))}&\le D_2e^{-\epsilon_0\gamma^{-1}n}.\label{eqn:closeness_of_sigma_k_plus_1}
\end{align}

And from Proposition \ref{prop:singular_values_separate} it follows that there exists $C_{\gamma,\epsilon}>0$ depending only on $\gamma,\epsilon$ and the cocycle such that for all $n\in \N$,
\begin{align}
C_{\gamma,\epsilon}^{-1}e^{ n(\lambda_k-\epsilon)}&\le \sigma_k(A^n(p^{\lceil\gamma^{-1}n\rceil}))\le C_{\gamma,\epsilon}e^{ n(\lambda_k+\epsilon)}\label{eqn:bounds_on_k},\\
C_{\gamma,\epsilon}^{-1}e^{ n(\lambda_{k+1}-\epsilon)}&\le \sigma_{k+1}(A^n(p^{\lceil\gamma^{-1}n\rceil}))\le C_{\gamma,\epsilon}e^{ n(\lambda_{k+1}+\epsilon)}\label{eqn:bounds_on_k_plus_1}.
\end{align}

Thus by\eqref{eqn:closeness_of_sigma_k}, \eqref{eqn:closeness_of_sigma_k_plus_1}, \eqref{eqn:bounds_on_k}, \eqref{eqn:bounds_on_k_plus_1}, we see that
\begin{align}
\sigma_k(A^n(\omega))&\ge C_{\gamma,\epsilon}^{-1}e^{ n(\lambda_k-\epsilon)}-D_2e^{-\epsilon_0\gamma^{-1}n}\label{eqn:aa}\\
\sigma_{k+1}(A^n(\omega))&\le C_{\gamma,\epsilon}e^{ n(\lambda_{k+1}+\epsilon)}+D_2e^{-\epsilon_0\gamma^{-1}n}\label{eqn:bb}.
\end{align}
From inequalities \eqref{eqn:ineq1} and \eqref{eqn:ineq2} we have that:
\begin{equation}\label{eqn:ineq10}
-\epsilon_0\gamma^{-1} < \lambda_{k+1}+\epsilon\text{ and } -\epsilon_0\gamma^{-1}< \lambda_k-\epsilon.
\end{equation}
Thus from the inequalities \eqref{eqn:aa}, \eqref{eqn:bb}, and \eqref{eqn:ineq10} we see that there exists $D_3$ such that
\begin{align}
\sigma_k(\omega)\ge D_{3}^{-1}e^{n(\lambda_k-\epsilon)}\\
\sigma_{k+1}(\omega)\le D_{3}e^{n(\lambda_{k+1}+\epsilon)}.
\end{align}
From this it is immediate that
\begin{equation}
\sigma_k(\omega)\ge D_{3}^{-2} e^{n(\lambda_k-\lambda_{k+1}-2\epsilon)}\sigma_{k+1}(\omega),
\end{equation}
which is the needed conclusion because $\lambda_k-\lambda_{k+1}-2\epsilon>0$ by \eqref{eqn:ineq4}.
\end{proof}

We can now deduce the main theorem of this paper. 

\begin{proof}[Proof of Theorem \ref{thm:dominated_splitting_constant_data}]

From Proposition \ref{prop:constant_data_lower_bound_on_growth}, the cocycle satisfies the criterion of Bochi--Gourmelon (Theorem \ref{thm:yoccoz_criterion}), and hence has a dominated splitting, so we are done.
\end{proof}

\subsection{Narrow periodic data}

In this section, we show that our result that produces dominated splittings works even when the periodic data is only close to being constant. For sufficiently narrow periodic data, one can still obtain a dominated splitting.

In this section, we sketch the proof of Theorem \ref{thm:narrow_data} for $\delta$-narrow periodic data of $\GL(2,\R)$ cocycles. The proof in the general case is quite similar but more complicated. So, suppose that $A$ is a $\GL(2,\R)$ cocycle with $\delta$-narrow periodic data around $\lambda_1>\lambda_2$. We will conduct the rest of the proof under the additional assumption that $\lambda_1>0>\lambda_2$. The analysis would be similar without this assumption. The proof proceeds along the same lines as the proof for constant periodic data except now there is slightly more information to keep track of.

To begin, as in Proposition \ref{prop:lower_bound_norm}we obtain that for all $\epsilon,\gamma>0$ there exist $C,\ell>0$ such that for any $\gamma n\le i\le \gamma n+\ell$ and any periodic point $p$ of period $n$,
\begin{equation}\label{eqn:lower_bound_sigma_1_a}
\sigma_1(A^i(p))\ge C_{\gamma,\epsilon} e^{
((\lambda_1-\delta)-(1-\gamma)(\lambda_1+\delta+\epsilon))n}.
\end{equation}
For the induced cocycle $A_2$ on the exterior power $\Lambda^2\R^2$, the periodic data is $2\delta$-narrow around $\lambda_1+\lambda_2$. Thus as before, by Proposition \ref{prop:maximum_norm_est_kalinin}, we obtain that for all $\epsilon>0$, there exists $C_{\epsilon}$ such that 
\begin{equation}\label{eqn:lower_bound_sigma_1_b}
\sigma_1(A_2^i(p))=\sigma_1(A^i(p))\sigma_2(A^i(p))\le C_{\epsilon}e^{i(\lambda_1+\lambda_2+2\delta+\epsilon)}.
\end{equation}
Keeping in mind that $i$ and $\gamma n$ differ by at most $\ell$, we take the quotient of equations \eqref{eqn:lower_bound_sigma_1_a} and \eqref{eqn:lower_bound_sigma_1_b} to obtain that for all $\epsilon,\gamma>0$, there exists $D_{\epsilon,\gamma}$ such that for $\gamma n\le i\le \gamma n +\ell$,
\[
\sigma_2(A^i(p))\le D_{\epsilon,\gamma}e^{n((\lambda_2+\delta)\gamma+2\delta+\epsilon)}.
\]
Suppose we now carry out the argument in Proposition \ref{prop:constant_data_lower_bound_on_growth}. Then we obtain---in the notation of the proof of that proposition---that for an arbitrary point $\omega$ that for each $n$, if $\gamma n\le i\le \gamma n+\ell$, then
\begin{align}
\sigma_1(A^i(\omega))&\ge C_{\gamma,\epsilon}e^{n((\lambda_1-\delta)-(1-\gamma)(\lambda_1+\delta+\epsilon))}-D_2e^{-\epsilon_0 n}\label{eqn:lower_bound_k_1}\\
\sigma_2(A^i(\omega))&\label{eqn:upper_bound_k_2}\le D_{\epsilon,\gamma}e^{n((\lambda_2+\delta)\gamma+2\delta+\epsilon)}+D_2e^{-\epsilon_0 n}
\end{align}
where 
\[
-\epsilon_0=\mu\gamma -\beta(1-\gamma)+\gamma\kappa<0,
\]
due to the same estimates as in the proof of Proposition \ref{prop:constant_data_lower_bound_on_growth}.

In order to proceed, it suffices to show that the $D_2$ terms in equation \eqref{eqn:lower_bound_k_1} and \eqref{eqn:upper_bound_k_2} are negligible relative to the first terms in those equations. The two constraints we obtain are, respectively,
\begin{align}
\mu\gamma -\beta(1-\gamma)+\gamma\kappa&<\lambda_1-\delta-\lambda_1-\delta-\epsilon+\gamma\lambda_1+\gamma\delta+\gamma\epsilon,\\
\mu\gamma -\beta(1-\gamma)+\gamma\kappa&<(\lambda_2+\delta)\gamma+2\delta+\epsilon.
\end{align}
Thus we can find $\gamma$ that satisfies this equation as long as 
\begin{equation}\label{eqn:upper_bound_gamma_from_D_2}
\gamma<\min\left\{\frac{\beta-2\delta-\epsilon}{\mu-\lambda_1-\delta-\epsilon+\beta+\kappa},\frac{\beta+2\delta
+\epsilon}{\mu-\lambda_2-\delta+\beta+\kappa}\right\}.
\end{equation}

As long as $\gamma$ satisfies inequality \eqref{eqn:upper_bound_gamma_from_D_2}, then we may disregard the lower order $D_2$ terms in \eqref{eqn:lower_bound_k_1} and \eqref{eqn:upper_bound_k_2}. Hence we obtain the bound:
\begin{equation}
\sigma_1(A^i(\omega))\ge D_3e^{n[((\lambda_1-\delta)-(1-\gamma)(\lambda_1+\delta+\epsilon))-((\lambda_2+\delta)\gamma+2\delta+\epsilon)]}\sigma_2(A^i(\omega)).
\end{equation}
Thus we will be able to conclude if we know that 
\begin{equation}
((\lambda_1-\delta)-(1-\gamma)(\lambda_1+\delta+\epsilon))-((\lambda_2+\delta)\gamma+2\delta+\epsilon)>0,
\end{equation}
which holds as long as 
\begin{equation}\label{eqn:second_constraint_on_gamma}
\gamma\ge \frac{4\delta+2\epsilon}{\lambda_1-\lambda_2+\epsilon}.
\end{equation}
Thus there will exist $\gamma$ satisfying \eqref{eqn:upper_bound_gamma_from_D_2} and \eqref{eqn:second_constraint_on_gamma} as long as  
\begin{equation}
    \frac{4\delta+2\epsilon}{\lambda_1-\lambda_2+\epsilon}<\min\left\{\frac{\beta-2\delta-\epsilon}{\mu-\lambda_1-\delta-\epsilon+\beta+\kappa},\frac{\beta+2\delta
+\epsilon}{\mu-\lambda_2-\delta+\beta+\kappa}\right\}.
\end{equation}

As several of the parameters may be taken to be arbitrarily small, we can optimize this estimate further. By Kalinin's work on the Livsic theorem \cite[Thm.~1.3]{kalinin2011livsic}, we take $\mu$ as close to $\lambda_1+\delta$ as we like. Further we may always pick $\epsilon$ and $\kappa$ arbitrarily close to zero. Thus we see that for a $\beta$-H\"older $\GL(2,\R)$ cocycle with $\delta$-narrow periodic data around $\lambda_1> \lambda_2$, there is a dominated splitting of $A$ if 
\begin{equation}
    \frac{4\delta}{\lambda_1-\lambda_2}<\min\left\{\frac{\beta-2\delta}{\beta},\frac{\beta+\delta
}{\lambda_1-\lambda_2+\beta}\right\}
\end{equation}
Note that this inequality is always satisfied if $\delta$ is sufficiently small. 

Note that when $\lambda_1-\lambda_2$ is large, that we may take $\delta$ to be large relative to $\beta$. It is useful to note that the case where $\lambda_1-\lambda_2$ is close to $0$ is not particularly novel because these cocycles are actually fiber bunched and were already analyzed by the work of Velozo Ruiz \cite{velozo2020characterization}. Note that by following along the lines above one may similarly compute how small $\delta$ must be for a cocycle with $\delta$-narrow spectrum around a list of numbers $\lambda_1\ge \cdots\ge\lambda_d$ to necessarily have a dominated splitting.

It is interesting to relate the prior discussion to a question due to Katok that has been studied by Travis Fisher and the second author~\cite{fisher2006some, gogolev2010diffeomorphisms}. One way to formulate it is as follows.
\begin{quest}
Let $L$ be an Anosov automorphism with only two Lyapunov exponents $-\lambda<\lambda$. Assume that $f$ is a volume preserving diffeomorphism that is H\"older conjugate to $L$. Is $f$ also Anosov?
\end{quest}

Hence the question is asking for existence of a dominated splitting for $Df$, which can naturally be considered as a H\"older continuous cocycle over $f$, which is a uniformly hyperbolic homeomorphism. In the following discussion, note that because we have assumed that $f$ is volume preserving that we are now essentially studying an $\SL(2,\R)$ cocycle, rather than a $\GL(2,\R)$ cocycle. In the two dimensional setting, this substantially simplifies the analysis. The H\"older continuity of the conjugacy and its inverse is essentially equivalent to a narrowness condition on the periodic data. In particular, by studying the rate at which points approach a periodic point, we can deduce that if $h$ is $C^{\theta}$ and $h^{-1}$ is $C^{\omega}$, then the positive exponent of the periodic points of $Df$ lies in $(\lambda-\delta,\lambda+\delta)$ for some $\delta$ satisfying
\begin{equation}\label{eqn:theta_omega_for_delta_pm}
\theta\le \frac{\lambda-\delta}{\lambda},\,\,\,\,\,\omega\le \frac{\lambda}{\lambda+\delta}.
\end{equation}

\clearpage

In~\cite[Thm.~1]{gogolev2010diffeomorphisms}, examples of diffeomorphisms conjugate to Anosov diffeomorphisms but are not Anosov are constructed. In fact, quantitative bounds are obtained: for each $\epsilon>0$, there exists a diffeomorphism $f$ H\"older conjugate to Anosov such that $\theta\omega=1/8-\eps$, $Df$ does not have a dominated splitting, and $f$ is not Anosov. Necessarily, such an $f$ cannot have narrow spectrum.
On the other hand Fisher's result~\cite[Thm.~5.1]{fisher2006some}, \cite[Thm.~2]{gogolev2010diffeomorphisms}, says that if $\theta\omega>1/2$ then $f$ is Anosov. 

As regularity of the conjugacy implies a particular $\delta$-narrow spectrum condition, it is interesting to see how much regularity of the conjugacy is needed to show that $Df$ has a hyperbolic splitting by pulling $Df$ back to a cocycle over $L$ and applying the techniques described above.

In the case of $\SL(2,\R)$ cocycles, producing a dominated splitting is particularly easy because one only needs to show that $\sigma_1(A^i(\omega))$ is growing exponentially fast, which results in fewer inequalities to check. In the context of the proof just outlined for $\GL(2,\R)$ cocycles, we just need to see that the right hand side of equation \eqref{eqn:lower_bound_k_1} is dominated by the first term, and that this first term is, in fact, growing exponentially. This leads to two inequalities corresponding to these two conditions:
\begin{align}
(\lambda-\delta)-\lambda-\delta-\epsilon+\gamma\lambda+\gamma\delta&>(\mu+\epsilon)\gamma-\lambda\beta(1-\gamma)\\
(\lambda-\delta)-\lambda-\delta-\epsilon+\gamma\lambda+\gamma\delta&>0.
\end{align}
Note that the first inequality has an additional factor of $\lambda$ on the final term that did not appear above: this is because the shadowing argument uses the hyperbolicity of the base dynamics, which for the shift is normalized so that $\lambda=1$.

A dominated splitting will exist if there is $0<\gamma<1$ solving these two inequalities. Using as before that we can choose $\epsilon$ arbitrarily small and use $\lambda_1+\delta$ as an upper bound for $\mu$, we see that there will exist some $0<\gamma<1$ satisfying this pair of inequalities as long as 
\begin{equation}\label{eqn:ineq_volume_preserving}
\frac{2\delta}{\lambda+\delta}<\frac{\lambda\beta-2\delta}{\lambda\beta}.
\end{equation}

We now determine when the inequality \eqref{eqn:ineq_volume_preserving} holds in terms of $\omega$ and $\theta$. As the cocycle we are studying is the pullback of $Df$ by $h$, this cocycle is $\theta$ H\"older, so $\beta=\theta$ in \eqref{eqn:ineq_volume_preserving}. The following bounds then follow easily from \eqref{eqn:theta_omega_for_delta_pm}:
\begin{equation}
    \frac{2\delta}{\lambda+\delta}\le 1-\omega\theta\text{ and } \frac{1}{\omega\theta}<\frac{\lambda\beta-2\delta}{\lambda \beta}.
\end{equation}
Thus we see that there is a dominated splitting as long as 
\[
1-\omega\theta<\frac{1}{\theta \omega}
\]
Thus there is a dominated splitting as long as $\omega\theta$ is larger than the positive root of $x^2+x-1$, which is $(\sqrt{5}-1)/2>.618$.
This is a weaker conclusion than Fisher's $\omega\theta>1/2$. We note, however, that our approach is through general results on cocycles while Fisher's proof is very geometric and relies on Ma\~n\'e's characterization of hyperbolicity, so it's not surprising that our result is a bit weaker.
 
An advantage of our approach is that it can also be  applied to recover splittings other than the splitting into stable and unstable subspaces from a sufficiently regular conjugacy. 

\begin{corollary}
Suppose that an Anosov automorphism $L$ has a dominated splitting of index $k$ of its unstable bundle. Then there exist $\theta_0<1$ and $\omega_0<1$ such that the following holds. If $f$ is a $C^{2}$ diffeomorphism that is H\"older conjugate to $L$ by a conjugacy $h$ such that $h$ is $\theta$-H\"older and $h^{-1}$ is $\omega$-H\"older with $\theta>\theta_0$ and $\omega>\omega_0$, then $f$ is also Anosov and has a dominated splitting of index $k$ of its unstable bundle.
\end{corollary}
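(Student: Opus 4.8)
The plan is to transport the problem to a cocycle over $L$ and apply Theorem~\ref{thm:narrow_data}. Normalize the conjugacy so that $h\circ f=L\circ h$, and for an $L$-periodic point $p$ set $q=h^{-1}(p)$, which is $f$-periodic of the same period. Pull the derivative cocycle of $f$ back to a cocycle $\mathcal A$ over $L$ by $\mathcal A(x)=D_{h^{-1}(x)}f$, so that $\mathcal A^n(x)=D_{h^{-1}(x)}f^n$ and in particular $\mathcal A^n(p)=D_qf^n$; thus the periodic data of $\mathcal A$ is exactly the periodic data of $Df$ sampled at the points $h^{-1}(p)$. Since $f\in C^2$, the map $Df$ is $C^1$, so $\mathcal A=Df\circ h^{-1}$ is H\"older with an exponent $\beta>0$ that is bounded away from $0$ as long as $\theta$ and $\omega$ are. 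Moreover $(\mathbb T^d,L)$ is a transitive hyperbolic system, hence coded by a subshift of finite type, so Theorem~\ref{thm:narrow_data} applies to $\mathcal A$.

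The main point I would establish is that the regularity of the conjugacy forces narrow periodic data: there is $\delta=\delta(\theta,\omega)$ with $\delta(\theta,\omega)\to 0$ as $\theta,\omega\to 1$ such that the periodic data of $\mathcal A$ is $\delta$-narrow centered at the Lyapunov exponents $\lambda_1\ge\cdots\ge\lambda_d$ of $L$. The mechanism is the one sketched around \eqref{eqn:theta_omega_for_delta_pm}: if $x$ lies in a strong-stable leaf of $L$ through $p$ on which $L^{n}$ contracts at rate $\lambda_j<0$, then $d(L^{nm}x,p)\asymp e^{nm\lambda_j}$, and applying $h^{-1}$ with the H\"older bounds on $h$ and $h^{-1}$ sandwiches the rate of approach of $f^{nm}(h^{-1}x)$ to $q$ between $e^{nm\theta\lambda_j}$ and $e^{nm\lambda_j/\omega}$, both of which tend to $\lambda_j$ as $\theta,\omega\to 1$; running this along the full flag of strong-stable subfoliations of $L$, and along the strong-unstable flag (using $f^{-1}$) for the expanding exponents, pins every eigenvalue modulus of $D_qf^n$ to within $e^{\pm n\delta}$ of $e^{n\lambda_i}$. \textbf{This is the hard part:} unlike the two-dimensional volume-preserving case in the excerpt, where a single extremal exponent suffices, here one must control all intermediate exponents, which means transporting contraction/expansion rates through $h$ leaf by leaf along the filtration of $L$ and checking (e.g.\ by passing to exterior powers, where the relevant rates are sums of consecutive $\lambda_i$) that these sandwich bounds genuinely determine the eigenvalue moduli.

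Granting this, choose $\theta_0,\omega_0<1$ close enough to $1$ that $\delta(\theta,\omega)$ lies below the threshold of Theorem~\ref{thm:narrow_data} for the H\"older exponent $\beta$ in play and for both indices $k$ and $m:=\dim E^u$ (possible since that threshold depends only on $\beta$ and the index and is bounded away from $0$ once $\beta\ge\min(\theta_0,\omega_0)$; here $m$ sits at the gap $\lambda_m>0>\lambda_{m+1}$ and $k<m$ sits at the gap $\lambda_k>\lambda_{k+1}$ supplied by the hypothesis that $DL|_{E^u}$ has a dominated splitting of index $k$). Theorem~\ref{thm:narrow_data} then gives $\mathcal A$ dominated splittings of indices $k$ and $m$. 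Since the conjugacy identifies $\mathcal A^n(x)$ with $D_{h^{-1}(x)}f^n$ as linear maps, the singular value sequences of $\mathcal A$ and of $Df$ over $f$ agree, so by the Bochi--Gourmelon criterion (Theorem~\ref{thm:yoccoz_criterion}) $Df$ has dominated splittings of indices $k$ and $m$ too. For the index-$m$ splitting $E\oplus F$ of $Df$ (fast bundle $E$, $\dim E=m$), the restricted cocycles $Df|_E$ and $Df|_F$ are H\"older with $\delta$-narrow periodic data centered at $\lambda_1,\dots,\lambda_m>0$ and at $\lambda_{m+1},\dots,\lambda_d<0$ respectively, so the narrow-data version of Proposition~\ref{prop:maximum_norm_est_kalinin} applied to $(Df|_E)^{-1}$ and to $Df|_F$ shows, for $\delta$ small, that $E$ is uniformly expanded and $F$ uniformly contracted; hence $f$ is Anosov with $E^u_f=E$ of dimension $m=\dim E^u$. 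Finally, the index-$k$ fast bundle of $Df$ is contained in $E^u_f$ (fast bundles of dominated splittings of a fixed cocycle form a flag), and intersecting with the index-$k$ slow bundle exhibits a $Df$-invariant dominated splitting of index $k$ of $E^u_f$, as required.
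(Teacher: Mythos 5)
Your proposal follows essentially the same route the paper takes (and leaves largely implicit): pull $Df$ back through $h$ to a H\"older cocycle over $L$, use the H\"older sandwich on rates of approach to periodic points to show the periodic data is $\delta(\theta,\omega)$-narrow with $\delta\to 0$ as $\theta,\omega\to 1$, and then invoke Theorem~\ref{thm:narrow_data} at indices $k$ and $\dim E^u$. The step you flag as the hard part --- pinning down all intermediate eigenvalue moduli rather than just the extremal ones --- is precisely the step the paper itself only sketches (around \eqref{eqn:theta_omega_for_delta_pm}), and the remainder of your argument (transferring domination between the cocycle over $L$ and $Df$ over $f$ via the Bochi--Gourmelon singular-value criterion, and extracting uniform hyperbolicity from the index-$\dim E^u$ splitting via Kalinin's norm bound) is correct.
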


\subsection{Cocycles over general hyperbolic systems}

In the case of cocycles over general hyperbolic systems the same results hold. In principle, there are two ways to obtain such results. One way is to code the system by using the machinery of Markov partitions and then formally deduce the result in the general setting from the symbolic one. There will be some quantitative loss due to passing through a H\"older coding map. Since the argument only relies on shadowing, one may also redo the entire argument presented above. In our view, this is a cleaner approach. As the argument is so similar, we omit it but we now describe the setting. The only advantage that symbolic setting has provided is expositional as shadowing takes a particularly simple form for SFTs. The properties we require in the general setting are the same as those considered in \cite{kalinin2011livsic} because the results of that paper are used in the proof. The first is essentially the definition of exponential shadowing.

\begin{definition}\label{def:exp_shadowing}
We say that two orbit segments $x,f(x),\ldots f^n(x)$ and $p,f(p),\ldots$ $f^n(p)$ are \emph{exponentially $\delta$-close with exponent $\lambda>0$} if for every $i=0,\ldots,n$, we have 
\begin{equation}
d(f^i(x),f^i(p))\le \delta\exp(-\lambda\min\{i,n-i\}).
\end{equation}
\end{definition}

\begin{definition}\label{defn:closing_property}
(Closing Property) We say that a homeomorphism $f\colon (X,d)\to (X,d)$ of a metric space satisfies the \emph{closing property} if there exist $c,\lambda,\ell>0$ such that for any $x\in X$ and $n>0$ there exists $0\le j\le \ell$ and a point $p\in X$ such that $f^{n+j}(p)=p$ and the orbit segments $x,f(x),\ldots,f^n(x)$ and $p,f(p),\ldots,f^n(p)$ are exponentially $\delta=cd(x,f^n(x))$ close with exponent $\lambda$. Further, we assume that there exists a point $y\in X$ such that for every $i=0,\ldots,n$
\[
d(f^i(p),f^i(y))\le \delta e^{-\lambda i}\text{ and }d(f^i(y),f^i(x))\le \delta e^{-\delta(n-i)}.
\]
\end{definition}
\noindent It is well-known that locally maximal hyperbolic sets satisfy these properties. As a special case, the above apply to Anosov diffeomorphisms.  See for instance \cite[Ch.~18]{katok1995introduction}.

Using these definitions, we formulate a more general version of our main technical result which should be useful for future applications. We remark, however, that this result is not needed for the applications in this paper because the unstable and stable bundles of an Anosov diffeomorphism of a torus are always trivial.
\begin{theorem}\label{thm:non-trivial_bundle}
Suppose that $f\colon X\to X$ is a homeomorphism of a compact metric space satisfying the closing property in Definition \ref{defn:closing_property}. Let $\lambda_1\ge\cdots\ge \lambda_d$ be a non-increasing list of real numbers with $\lambda_k>\lambda_{k+1}$ for some $k$, and let $\mc{E}$ be a H\"older vector bundle over $X$ in the sense of \cite[Sec.~2.2]{bochi2019extremal}. Then for any $\beta>0$ there exists $\delta>0$ such that any $\beta$-H\"older linear cocycle $\mc{A}\colon\mc{E}\to \mc{E}$ over $f$ that has $\delta$-narrow spectrum around $\lambda_1\ge \cdots\ge \lambda_d$ has a dominated splitting of index $k$.
\end{theorem}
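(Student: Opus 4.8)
The plan is to reduce Theorem \ref{thm:non-trivial_bundle} to the symbolic case already established in Theorem \ref{thm:narrow_data} (and its multi-index analogue sketched after it), but — as the authors suggest — it is cleaner to simply re-run the shadowing argument of Proposition \ref{prop:constant_data_lower_bound_on_growth} in the abstract setting, since all that was ever used was exponential shadowing by periodic orbits plus Kalinin's norm estimate. Concretely, I would first record the general-setting version of Proposition \ref{prop:maximum_norm_est_kalinin}: by \cite[Thm.~1.3]{kalinin2011livsic}, applied to a $\beta$-H\"older cocycle over a system satisfying the closing property with $\delta$-narrow spectrum, one has $\|\mc{A}(n,x)\|\le C_\epsilon e^{n(\lambda_1+\delta+\epsilon)}$ for all $x,n$ (and the analogous statement for the induced cocycles $\mc{A}_j$ on $\Lambda^j\mc{E}$, whose spectrum is $j\delta$-narrow around $\lambda_1+\cdots+\lambda_j$). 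From this the exact analogues of Propositions \ref{prop:lower_bound_norm} and \ref{prop:singular_values_separate} follow verbatim, giving two-sided bounds $C^{-1}e^{\gamma n(\lambda_m-\epsilon')}\le \sigma_m(\mc{A}(i,p))\le Ce^{\gamma n(\lambda_m+\epsilon')}$ for periodic $p$ of period $n$ and $\gamma n\le i\le \gamma n+\ell$, where $\epsilon'\to 0$ as $\delta,\epsilon\to 0$.

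Next comes the shadowing step. Given an arbitrary $x\in X$ and $n\in\N$, transitivity/the closing property is not quite what we want directly — the closing property as stated closes up an orbit segment that nearly returns, so I would first use a preliminary step: pick $n$, let $x' = f^{-n}(x)$ and look at the segment $x', f(x'),\dots, f^{2n}(x')$; since $X$ is compact, along a subsequence the endpoints $f^{2n}(x')$ come close to $x'$, but to handle all $n$ uniformly one instead argues as in the SFT case — one needs a periodic orbit of period $\approx 2n+\ell$ that exponentially shadows the segment of length $2n$ centered at $x$, with shadowing exponent $\lambda>0$ coming from Definition \ref{defn:closing_property}. (In the SFT case this was Remark \ref{rem:quick_closing}; here it is obtained from local maximality/specification, or by coding — either way it is available for systems with the closing property. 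I would state this as a lemma and cite \cite[Ch.~18]{katok1995introduction} or run the Markov partition reduction once to produce it.) With such a periodic orbit $p^n$ in hand, set $E_{n,i} = \mc{A}(f^i(x)) - \mc{A}(f^i(p^n))$ (using the H\"older bundle structure to make sense of the subtraction in local trivializations, or working in the ambient $X\times\R^D$ into which $\mc{E}$ embeds), so that $\|E_{n,i}\|\le C_1 e^{-\beta\lambda(n-i)}$ by $\beta$-H\"older continuity of $\mc{A}$ and exponential shadowing. Expanding the product $\prod_{i=0}^{\lfloor\gamma n\rfloor}\mc{A}(f^i(x))$ and grouping terms by the number of error factors exactly as in Proposition \ref{prop:constant_data_lower_bound_on_growth} — using Claim \ref{claim:binom_estimate} to absorb the binomial coefficients — yields
\[
\mc{A}(\lfloor\gamma n\rfloor, x) = \mc{A}(\lfloor\gamma n\rfloor, p^n) + E_n, \qquad \|E_n\|\le D_1 e^{-\epsilon_0 n},
\]
provided the parameters $\gamma,\kappa,\epsilon,\epsilon_0$ are chosen to satisfy the same system of inequalities as in that proof, with $\beta$ replaced by $\beta\lambda$ (the extra factor $\lambda$ being the shadowing exponent of the base — exactly the "additional factor of $\lambda$" the authors flag for the $\SL(2,\R)$ discussion).

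Finally, I would apply the singular-value perturbation estimate (Proposition \ref{prop:singular_values_perturbation}) to transfer the periodic bounds on $\sigma_k,\sigma_{k+1}$ from Proposition \ref{prop:singular_values_separate} to $x$, obtaining $\sigma_k(\mc{A}(n,x))\ge D_3^{-1}e^{n(\lambda_k-\epsilon')}$ and $\sigma_{k+1}(\mc{A}(n,x))\le D_3 e^{n(\lambda_{k+1}+\epsilon')}$ for all $n$, hence $\sigma_k(\mc{A}(n,x))\ge D_3^{-2} e^{n(\lambda_k-\lambda_{k+1}-2\epsilon')}\sigma_{k+1}(\mc{A}(n,x))$; choosing $\delta$ (hence $\epsilon'$) small enough that $\lambda_k-\lambda_{k+1}-2\epsilon'>0$ and invoking the Bochi--Gourmelon criterion (Theorem \ref{thm:yoccoz_criterion}, which is stated for arbitrary continuous cocycles over homeomorphisms of compact metric spaces, so applies verbatim on $\mc{E}$) produces the dominated splitting of index $k$. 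The main obstacle, and the only place real care is needed, is the very first transfer step: making Kalinin's uniform norm bound and the closing/shadowing construction work on a genuinely nontrivial H\"older bundle $\mc{E}$ rather than $X\times\R^d$ — one must either embed $\mc{E}$ into a trivial bundle and verify that $\beta$-H\"older continuity of the cocycle is preserved (here the subtlety noted in the excerpt, that compositions of $\beta$-H\"older maps need not be $\beta$-H\"older, must be checked not to bite, which it does not because we only ever compose finitely many — at most $\ell$ — transition functions), or work in local trivializations and track the cocycle there, as in \cite[Sec.~2.2]{bochi2019extremal}. Everything downstream of that is a line-by-line repetition of the already-given arguments with $\beta \rightsquigarrow \beta\lambda$ and $0 \rightsquigarrow \delta$.
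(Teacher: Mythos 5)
Your proposal is correct and follows exactly the route the paper itself prescribes for this theorem (whose detailed proof the paper omits): re-run the shadowing argument of Proposition~\ref{prop:constant_data_lower_bound_on_growth} with the closing property of Definition~\ref{defn:closing_property} replacing Remark~\ref{rem:quick_closing}, Kalinin's estimate applied in the general setting, and the base shadowing exponent $\lambda$ appearing alongside $\beta$ in the parameter inequalities. Your only hesitation is unnecessary: the closing property as stated already applies to every $x$ and $n$ with $\delta=c\,d(x,f^n(x))$ bounded by compactness, so applying it to the segment of length $2n$ starting at $f^{-n}(x)$ directly yields the required periodic orbit, just as you then do.
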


\section{Application to Anosov Diffeomorphisms}

From the main technical result we can now deduce our periodic data rigidity theorems. We remark that this section is mostly an explanation of how to combine our  Theorem \ref{thm:dominated_splitting_constant_data} with existing results. That said there are other novelties such as Observation \ref{obs:c1_conj}, which turns local $C^1$ results into global $C^1$ results.

\subsection{Rigidity of non-linear Anosov diffeomorphisms}

We now turn to the proof of our non-linear rigidity theorem, Theorem \ref{thm:non_linear_rigidity}. We organize the proof into steps with Lemma \ref{lem:dominated_splitting_exists} being the main novel step. The remaining steps are already present in~\cite{gogolev2008differentiable, gogolev2008smooth}. Nonetheless, we discuss them to show how they fit together. To begin, we must recover the dominated splitting.

\begin{lemma}\label{lem:dominated_splitting_exists}
Suppose that $L$ is an Anosov automorphism of $\mathbb{T}^d$. Let $\epsilon>0$ be a sufficiently small number, then there is a $C^1$ neighborhood $\mc{U}$ of $L$ such that if $f\in \mc{U}$, and $g$ is any $C^2$ Anosov diffeomorphism with the same periodic data as $f$, then $T\mathbb{T}^d$ has a continuous $Dg$-invariant splitting into bundles $E^{u,g}_i$, $E^{s,g}_j$ corresponding to the bundles of $L$. Further, there is a H\"older metric on $E^{u,g}$ such that if $\lambda_1\ge\cdots\ge \lambda_m>0$ are the unstable Lyapunov exponents of $L$, for $v\in E_i^{u,g}$, we have 
\begin{equation}\label{eqn:narrow_bnd_spectrum}
e^{\lambda_i-\epsilon}\|v\|\le \|Dgv\|\le e^{\lambda_i+\epsilon}\|v\|.
\end{equation}
The analogous statement holds for $E^{s,g}$ as well.
\end{lemma}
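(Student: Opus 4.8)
\textbf{Proof proposal for Lemma \ref{lem:dominated_splitting_exists}.}

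The plan is to reduce the statement to an application of Theorem \ref{thm:narrow_data} (equivalently Theorem \ref{thm:non-trivial_bundle}), together with the Hirsch--Pugh--Shub integrability machinery already recalled in the Preliminaries. First I would fix the target accuracy $\epsilon>0$ and choose $\delta>0$ small enough that Theorem \ref{thm:narrow_data} produces dominated splittings of every index $k$ for which $L$ has a spectral gap, and also small enough to feed into the norm estimate \eqref{eqn:narrow_bnd_spectrum} below (so $\delta$ depends on $\epsilon$ and on the gaps of $L$). Then I would choose the $C^1$-neighborhood $\mc{U}$ of $L$ so small that every $f\in\mc{U}$ has $\delta$-narrow periodic data centered at the exponents of $L$: this is immediate from continuity of $Df$ and the fact that the periodic data of $L$ is exactly constant, so a $C^1$-small perturbation moves each periodic eigenvalue modulus by an arbitrarily small exponential rate (here one uses that periodic orbits of $f$ shadow those of $L$, or simply that the derivative cocycle is $C^0$-close). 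Now let $g$ be \emph{any} $C^2$ Anosov diffeomorphism with the same periodic data as $f$ --- note $g$ need not lie in $\mc{U}$. Since $g$ and $f$ share periodic data, $g$ also has $\delta$-narrow periodic data centered at the exponents of $L$ (for the stable exponents one argues identically, or passes to $g^{-1}$).

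Next I would apply Theorem \ref{thm:narrow_data} (in the form of Theorem \ref{thm:non-trivial_bundle}, noting that for a torus the unstable and stable bundles are trivial so one may even use the SFT version after coding, but invoking Theorem \ref{thm:non-trivial_bundle} directly is cleaner) to the cocycle $Dg\colon T\mathbb{T}^d\to T\mathbb{T}^d$. For each index $k$ at which the spectrum of $L$ has a gap, this yields a dominated splitting of index $k$. Intersecting these dominated splittings across all the gap indices --- a standard fact, since dominated splittings of different indices are mutually compatible and their pairwise intersections and sums are again $Dg$-invariant and dominated --- produces a $Dg$-invariant continuous splitting $T\mathbb{T}^d=E^{s,g}_l\oplus\cdots\oplus E^{s,g}_1\oplus E^{u,g}_1\oplus\cdots\oplus E^{u,g}_m$ in which each summand has the same dimension as the corresponding bundle of $L$ (the dimension is forced: it is the index difference between consecutive gaps, which matches the multiplicity of the corresponding eigenvalue modulus of $L$). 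The splitting of $TM$ into the sum of the stable summands and the sum of the unstable summands coincides with the Anosov splitting of $g$, because a dominated splitting which separates contracted from expanded directions is unique and equals the hyperbolic splitting.

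Finally I would establish the adapted-metric estimate \eqref{eqn:narrow_bnd_spectrum}. From the dominated splitting we get, for each pair of consecutive bundles, an exponential rate of separation of singular values, hence via the Bochi--Gourmelon criterion uniform domination; combining this with the $\delta$-narrow bound on periodic data and Kalinin's maximal-growth estimate (Proposition \ref{prop:maximum_norm_est_kalinin}) applied to the restricted cocycles $Dg|_{E^{u,g}_i}$ and their exterior powers gives, as in Proposition \ref{prop:singular_values_separate}, that the norm and conorm of $Dg^n|_{E^{u,g}_i}$ both lie in $[e^{n(\lambda_i-\epsilon')},e^{n(\lambda_i+\epsilon')}]$ for $\epsilon'$ controlled by $\delta$. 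One then passes from these subadditive/superadditive uniform estimates to a pointwise one-step estimate by the standard device of defining a new, equivalent norm $\|v\|_x:=\sup_{n\ge 0}e^{-n(\lambda_i+\epsilon)}\|Dg^n v\|$ together with its companion $\inf$-type norm for the lower bound, or an average of finitely many iterates; this adapted norm is H\"older because $Dg$ and the bundles $E^{u,g}_i$ are H\"older (the latter since dominated bundles are always H\"older). Choosing $\delta$ small enough that $\epsilon'<\epsilon$ and absorbing constants into the norm yields \eqref{eqn:narrow_bnd_spectrum}, and the stable case follows by applying the same argument to $g^{-1}$.

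\textbf{Main obstacle.} The genuinely new content --- and the step I expect to be the crux --- is obtaining the dominated splittings of $Dg$ from the periodic data, i.e. the invocation of Theorem \ref{thm:narrow_data}/\ref{thm:non-trivial_bundle}; everything else is bookkeeping. A secondary technical point requiring care is verifying that $\delta$-narrowness of the periodic data of $f$ is genuinely transferred to $g$ with the \emph{same} center $(\lambda_1,\ldots,\lambda_d)$ and that the same $\delta$ works uniformly --- this is where "same periodic data as $f$" is used --- and then choosing the quantifiers ($\epsilon \leadsto \delta \leadsto \mc{U}$) in the correct dependency order so that the final $\epsilon$ in \eqref{eqn:narrow_bnd_spectrum} is the one prescribed in the statement.
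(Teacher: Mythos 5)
Your overall strategy is the paper's: arrange for $f\in\mc{U}$ (hence $g$) to have $\delta$-narrow periodic data centered at the exponents of $L$, invoke Theorem \ref{thm:narrow_data} (or \ref{thm:non-trivial_bundle}) to get the dominated splittings, and then extract the adapted H\"older metric. But there is a genuine gap in the quantifier order that the paper spends most of its proof closing and that your write-up does not address. The $\delta$ supplied by Theorem \ref{thm:narrow_data}/\ref{thm:non-trivial_bundle} is \emph{not} a function of the spectrum of $L$ alone: it depends on the H\"older exponent $\beta$ of the cocycle and on the hyperbolicity/closing constants of the base system over which the cocycle lives. In the application the base is $g$ itself (or equivalently the cocycle is pulled back over $L$ by a conjugacy whose H\"older exponent depends on $g$), and $g$ is an \emph{arbitrary} $C^2$ Anosov diffeomorphism, nowhere assumed close to $L$. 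So a priori the required $\delta$ varies with $g$, while you must fix $\delta$ (and hence $\mc{U}$) before $g$ is given. Your trick of applying the theorem to the Lipschitz cocycle $Dg$ on the full trivial bundle $T\mathbb{T}^d$ removes the dependence on the H\"older exponent of the restricted cocycle $D^ug\vert_{E^{u,g}}$, but not the dependence on the shadowing/closing data of the base $g$ (the exponent $\lambda$ in Definition \ref{defn:closing_property} enters the key inequality \eqref{eqn:ineq3} multiplied against $\beta$), so the circularity persists.

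The paper breaks this circularity with the sublemma (Lemma \ref{lem:holder_control}): since $g$ has the same periodic data as some $f$ that is $\delta'$-close to $L$, the norms $\|D^s_pg^n\|$ and $\|D^u_pg^{-n}\|$ over \emph{periodic} points satisfy exponential bounds with constants depending only on $L$ and $\delta'$; Kalinin's approximation theorem \cite[Thm.~1.3]{kalinin2011livsic} then upgrades these to bounds over \emph{all} points, uniformly in $g$. This gives uniform hyperbolicity rates for $g$ and, via the Anosov/Hirsch--Pugh formula, a uniform lower bound on the H\"older exponent of $E^{u,g}$ and $E^{s,g}$. Only after this does the theorem's $\delta$ become a constant depending on $L$ and $\delta'$ alone, so that $\mc{U}$ can be shrunk accordingly. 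You should add this step (or an equivalent uniformity argument) before invoking Theorem \ref{thm:narrow_data}; the rest of your outline — transfer of narrowness from $f$ to $g$, intersection of the dominated splittings across gap indices, dimension count, and the adapted-norm construction for \eqref{eqn:narrow_bnd_spectrum} — matches the paper and is fine at the stated level of detail.
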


\begin{proof}[Proof.]

Fix a small $\delta'>0$ and let $\cU'$ be a $\delta'$-small $C^1$ neighborhood of $L$. Let $f\in\cU'$ and assume that $g$ is an Anosov diffeomorphism with the same periodic data as $f$.

\begin{lemma}\label{lem:holder_control} Any diffeomorphism $g$ with the same periodic data as $f$ has the H\"older exponent of its stable and unstable bundles, and these constants depend only on $L$ and $\delta'$ and hence are uniform in $g$.
\end{lemma}
\begin{proof} As is standard (see \cite[Thm~2.3]{dewitt2021local}), for all $f\in\cU'$, being $\delta'$-close to $L$, must satisfy the same exponential bounds 
\[
C^{-1}\mu_1^n\le\|D^sf^n\|\le C^{}\mu_2^n,\,\,\,\, C^{-1}\nu_1^n\le\|D^uf^{-n}\|\le C^{}\nu_2^n,\,\,\, n\ge 1,
\]
where $\mu_i\in(0,1)$ and $\nu_i\in(0,1)$ only depend on $L$ and $\delta'$. Since $f$ and $g$ have the same periodic data we have the same bounds for $g$ over all periodic points $p=g^np$:
$$
C^{-1}\mu_1^n\le\|D^s_p g^n\|\le C^{}\mu_2^n,\,\,\,\, C^{-1}\nu_1^n\le\|D^u_pg^{-n}\|\le C^{}\nu_2^n.
$$
Now we can apply~\cite[Theorem~1.3]{kalinin2011livsic} to conclude that all $g$ also satisfy exponential bounds for all points, and uniformly in $g$. Namely for any $\eps>0$ there exists $C_\eps>0$ such that
$$
C_\eps^{-1}(\mu_1-\eps)n\le\|D^sg^n\|\le C_\eps^{}(\mu_2+\eps)^n,\,\,\,\, C_\eps^{-1}(\nu_1-\eps)^n\le\|D^ug^{-n}\|\le C_\eps^{}(\nu_2+\eps)^n.
$$
It is well known that the unstable subbundle of such $g$ is H\"older with a uniform constant and uniform H\"older exponent given by
$$
\min\left\{1, \frac{\log(\mu_2+\eps)+\log(\nu_2+\eps)}{\log{(\mu_1-\eps)}}\right\}.
$$
This H\"older property of subbundles is due to Anosov and the explicit expression for the exponent was given by Hirsch and Pugh~\cite{hirsch1970stable}.
\end{proof}

Now we note that the unstable subbundle of $g$ is trivial because $g$ is conjugate to $L$  \cite[Prop.~35]{dewitt2022periodic}. Hence the unstable differential $D^ug$ defines a cocycle over $g$ which is H\"older uniformly in $g$. We note that the above lower bound on the H\"older exponent could only improve if $\delta'$ is chosen to be even smaller. We are in the position to apply Theorem~\ref{thm:narrow_data} to conclude that there exists a $\delta$ such that if the periodic data of $g$ is $\delta$-narrow than $g$ admits a dominated splitting corresponding to the splitting for $L$. Recall that $\delta>0$ depends only on spectrum of $L$ and on (the lower bound on) the H\"older exponent of $D^ug$. 

So now we can choose an even smaller $C^1$ neighborhood $\cU$ of $L$ such that periodic data of $f\in \cU$ (and hence of $g$ as well) is $\delta$-narrow. Then Theorem~\ref{thm:narrow_data} indeed applies to such $g$ and we obtain a dominated splitting for $g$ which matches the dominated splitting for $f$.
\end{proof}

Next, we will use the following lemma to obtain that weak foliations exist even globally.

\begin{lemma}\label{lem:weak_foliations_exist}
Let $L$ be an Anosov automorphism of $\mathbb{T}^d$. Then there exists a $C^1$ open neighborhood $\mc{U}$ of $L$ in $\Diff^{2}(\mathbb{T}^d)$, such that for any $f\in \mc{U}$, the following holds. If $g$ is any $C^{2}$ Anosov diffeomorphism with the same periodic data as $f$, then $E^{u,g}$ has a $Dg$ invariant splitting into bundles $E_i^{u,g}$ corresponding to the bundles of $L$. Further, the weak flag of bundles $E^{wu,g}_i=E_1^{u,g}\oplus \cdots \oplus E_i^{u,g}$ is uniquely integrable to a weak foliation with uniformly $C^{1+\text{H\"older}}$ leaves, which we denote by $\mc{W}^{wu,g}_i$. Further, a conjugacy $h$ between $f$ and $g$ intertwines the $\mc{W}^{wu,g}_i$ and $\mc{W}^{wu,f}_i$ foliations.
\end{lemma}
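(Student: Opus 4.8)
The plan is to obtain the statement from the perturbative Hirsch--Pugh--Shub (HPS) theory applied to $g$, using the dominated splitting for $g$ produced in Lemma~\ref{lem:dominated_splitting_exists} as the input hypothesis; the conjugacy claim will then follow from a dynamical characterization of the weak foliation together with the fact that $h$ is a topological conjugacy. First I would invoke Lemma~\ref{lem:dominated_splitting_exists} to fix the $C^1$ neighborhood $\mc U$ of $L$: for $f\in\mc U$ and any $C^2$ Anosov diffeomorphism $g$ with the same periodic data as $f$, we get a continuous $Dg$-invariant splitting $E^{u,g}=E^{u,g}_1\oplus\cdots\oplus E^{u,g}_m$ (matching the eigenvalue-modulus splitting of $L$) together with a H\"older metric in which $Dg$ acts on $E^{u,g}_i$ with rate $e^{\lambda_i\pm\epsilon}$. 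Since $\lambda_1<\cdots$ is a strict ordering after collecting equal moduli, for $\epsilon$ small the flags $E^{wu,g}_i=E^{u,g}_1\oplus\cdots\oplus E^{u,g}_i$ are dominated subbundles of $E^{u,g}$, and the complementary strong subbundle $E^{u,g}_{i+1}\oplus\cdots\oplus E^{u,g}_m$ is contracted relative to $E^{wu,g}_i$. This is precisely the $r$-normally hyperbolic setup of HPS for the plaque family tangent to $E^{wu,g}_i$ inside the (already integrable, $C^{1+\text{H\"older}}$) unstable foliation $\mc W^{u,g}$ of $g$.

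The main technical point is unique integrability and leaf regularity. Here I would argue exactly as in the discussion preceding the statement (the passage citing \cite{hirsch1977invariant}): working leafwise inside $\mc W^{u,g}$, the bundle $E^{wu,g}_i$ is the weak-stable-type bundle for the restricted dynamics $g|_{\mc W^{u,g}}$ (reversing time on the unstable leaf, $E^{wu,g}_i$ is the \emph{stable} part of a dominated splitting), so by HPS it integrates uniquely to a foliation $\mc W^{wu,g}_i$ of $\mc W^{u,g}$-leaves with uniformly $C^{1+\text{H\"older}}$ leaves; the H\"older exponent and bunching constants are uniform in $g$ because, as established in Lemma~\ref{lem:holder_control}, the exponential bounds for $Dg$ and the H\"older exponent of the bundles depend only on $L$ and $\delta'$. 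Since the global unstable foliation $\mc W^{u,g}$ of $g$ is itself a genuine foliation of $\mathbb T^d$ (as $g$ is Anosov, conjugate to $L$, so $E^{u,g}$ is trivial by \cite{dewitt2022periodic}), the leafwise foliations $\mc W^{wu,g}_i$ assemble into a foliation of $\mathbb T^d$ with the asserted regularity. Uniqueness of the integral foliation follows because $E^{wu,g}_i$ is uniquely integrable: it is the sum of the weakest unstable bundles, so any integral manifold is forced to be tangent to it, and the domination prevents branching — this is the standard argument that a dominated ``stable-like'' subbundle inside a uniformly hyperbolic leaf is uniquely integrable.

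Finally, for the intertwining claim, I would use the dynamical characterization of $\mc W^{wu,g}_i$: a point $y$ lies on $\mc W^{wu,g}_i(x)$ iff $y$ is on the unstable leaf of $x$ and $\dist(g^{-n}x,g^{-n}y)\to 0$ at rate faster than $e^{-n(\lambda_i+\epsilon')}$ for suitable $\epsilon'$, i.e.\ $y$ is in the set of points on $\mc W^{u,g}(x)$ that are ``$i$-weakly backward asymptotic'' to $x$. Because $h f = g h$ and $h$, $h^{-1}$ are H\"older (uniform exponents from Lemma~\ref{lem:holder_control}), the rate of backward approach is preserved up to a fixed multiplicative distortion of the exponent; combined with the known fact that $h$ maps $\mc W^{u,f}$-leaves to $\mc W^{u,g}$-leaves (Franks--Manning) and that the exponents $\lambda_i$ are the \emph{same} for $f$ and $g$ (equal periodic data, hence equal exponents via Kalinin's Livsic theorem or because they are the unstable exponents of $L$ in both cases), one checks that $h$ carries the $i$-weakly asymptotic class of $x$ for $f$ onto that of $h(x)$ for $g$. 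This gives $h(\mc W^{wu,f}_i(x)) = \mc W^{wu,g}_i(h(x))$, i.e.\ $h$ intertwines $\mc W^{wu,f}_i$ and $\mc W^{wu,g}_i$. The step I expect to be the real obstacle is making the ``rate of backward approach characterizes the weak leaf'' statement precise and robust under the only-H\"older conjugacy: one must rule out that points just \emph{outside} $\mc W^{wu,f}_i$ could, after applying the H\"older $h$, spuriously appear to approach at the weak rate, which requires that the gap $\lambda_{i+1}-\lambda_i$ beat the distortion $\max\{1/\theta,1/\omega\}$ of the conjugacy's exponents — this is exactly where the narrowness/neighborhood size is used, and it is consistent with the hypotheses since $\mc U$ was already shrunk for Lemma~\ref{lem:dominated_splitting_exists}.
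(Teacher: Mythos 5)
Your overall architecture (invoke Lemma~\ref{lem:dominated_splitting_exists} for the splitting, integrate the weak flag, transport leaves through the conjugacy) matches the paper's, but two of your key steps have genuine gaps. First, you obtain unique integrability of $E^{wu,g}_i$ ``by HPS.'' The Hirsch--Pugh--Shub theory cited in the preliminaries is \emph{perturbative}: it integrates $E^{wu,f}_i$ for $f$ that are $C^1$-close to $L$ by persistence of the linear weak foliation of $L$. The diffeomorphism $g$ is not assumed close to $L$, and the weak summand of a dominated splitting (equivalently, the \emph{slowest}-contracted direction after reversing time on an unstable leaf) is not automatically integrable, let alone uniquely so --- only the strong flag integrates for free, as the paper itself notes. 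Producing the weak foliation for the non-perturbative $g$ is precisely the content of this lemma, and the paper does it by a different mechanism: any curve $\gamma$ tangent to $E^{wu,g}_i$ has $g^n\gamma$ of length growing at rate at most $e^{n(\lambda_j+\epsilon)}$; pushing this through the conjugacy to $L$ and using the linear dichotomy for $L$ (points in one unstable leaf but in different weak leaves separate at rate at least $e^{n\lambda_{j-1}}$) forces $h(\gamma)$ into a single $\mc{W}^{wu,L}_i$ leaf, and both integrability of $E^{wu,g}_i$ and the intertwining drop out of that single statement.

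Second, your intertwining argument transports rates of approach through the merely H\"older conjugacy and concedes that the spectral gap must ``beat the distortion $\max\{1/\theta,1/\omega\}$,'' suggesting this is arranged by shrinking $\mc{U}$. It is not: the H\"older exponents of $h$ are governed by ratios of expansion/contraction rates fixed by the spectrum of $L$, and they do not tend to $1$ as $\mc{U}$ shrinks, so your condition would be an extra spectral hypothesis on $L$ that the lemma does not assume. The paper avoids this loss entirely by using that $h$ restricted to unstable leaves is a \emph{quasi-isometry} for the intrinsic leaf metrics (\cite[Cor.~4.7]{dewitt2021local}), i.e.\ $A^{-1}d-B\le d(h(\cdot),h(\cdot))\le Ad+B$, which preserves exponential rates exactly rather than up to a power. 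That quasi-isometry input is the ingredient your argument is missing, and without it the comparison of separation rates between $g$ and $L$ is not sharp enough to identify the weak leaves.
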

\begin{proof}
This proposition follows along the same lines as in \cite[Prop.~4.8]{dewitt2021local}, which proves the same result under the additional assumption that $g$ is $C^1$ close to $f$; this assumption is not necessary as it is only used to ensure the existence of a dominated splitting.

Pick the neighborhood $\mc{U}$ so that the conclusions of Lemma \ref{lem:dominated_splitting_exists} hold. Fix some index $1\le i<\dim E^u$. We want to show that the weak foliation $\mc{W}^{wu,g}_{i}$ exists. Let $j=\dim E^{u}-i+1$ be the corresponding index for the Lyapunov exponent of $L$ on the bundle $E^{u,L}_i$.  Suppose that $x$ and $y$ are two points in the same $\mc{W}^{u,L}_i$ leaf. Then it holds that there exists $C_{x,y}>0$ such that: 
\begin{equation}
d_{\mc{W}^{u,L}}(L^n(x), L^n(y))\le C_{x,y}e^{n\lambda_j}.
\end{equation}
Further, if $z$ and $w$ are two points in the same $\mc{W}^{u,L}$ leaf that do not lie in the same $\mc{W}^{wu,L}_i$ leaf, then due to the linearity of the dynamics of $L$ it is easy to see that these points separate at a rate $\lambda_{j-1}$ that is strictly larger than $\lambda_j$. Namely, there exists $C_{z,w}$ such that:
\begin{equation}
d_{\mc{W}^{u,L}}(L^n(z),L^n(w))\ge C_{z,w}e^{n\lambda_{j-1}}.
\end{equation}
In particular, this implies that if $x$ and $y$ are two points in the same $\mc{W}^{u,L}$ leaf such that $L^n(x)$ and $L^n(y)$ separate at rate at most $e^{n\eta}$ for some $\eta<\lambda_{j-1}$, then $x\in \mc{W}^{wu,L}_i(y)$.

We now show how one may integrate the bundle $E^{wu,g}_i$ by using these facts. Suppose that $x$ and $y$ 
 are two points in the same $\mc{W}^{u,g}$ leaf that are connected by a curve $\gamma$ that is tangent to $E^{wu,g}_i$, then by equation \eqref{eqn:narrow_bnd_spectrum} we see that as we apply $g$ that the length of $g^n\gamma$ and hence the distance between $x$ and $y$ is growing at most at rate $e^{n(\lambda_j+\epsilon)}$, i.e. there exists a constant $C_{x,y}'$ such that
\begin{equation}\label{eqn:rate_of_separation}
d_{\mc{W}^{wu,g}_i}(g^n(x),g^n(y))\le C_{x,y}'e^{n(\lambda_{j}+\epsilon)}.
\end{equation}
Let $h$ be the conjugacy between $g$ and $L$. From \cite[Cor.~4.7]{dewitt2021local}, the conjugacy $h$ induces a quasi-isometry between leaves of the $\mc{W}^{u,g}$ foliation and leaves of the $\mc{W}^{u,L}$ foliation. What this means is that there exists constants $A\ge 1$ and $B>0$ such that if $z,w$ are two points in the same $\mc{W}^{u,g}$ leaf then 
\begin{equation}\label{eqn:QI_h}
A^{-1}d_{\mc{W}^{u,g}}(z,w)-B\le d_{\mc{W}^{u,L}}(h(z),h(w))\le Ad_{\mc{W}^{u,g}}(z,w)+B.
\end{equation}

Because $h\circ g^n=L^n\circ h$, by combining equations \eqref{eqn:rate_of_separation} and \eqref{eqn:QI_h}, we deduce that there exists some $D_{z,w}>0$ such that 
\begin{equation}
d_{\mc{W}^{u,L}}(L^nh(x),L^nh(y))\le D_{z,w}e^{n(\lambda_{j}+\epsilon)},
\end{equation}
where $\lambda_j+\epsilon<\lambda_{j-1}$ by choice of the neighborhood $\mc{U}$. Then by the last sentence of the first paragraph of the proof, we conclude that $h(x)\in \mc{W}^{wu,g}_i$. This means that any curve $\gamma$ tangent to the $E^{wu,g}_i$ distribution is carried by $h$ to a curve lying in $\mc{W}^{wu,g}_i$. From this it is straightforward to conclude that $E^{wu,g}_i$ integrates to a foliation $\mc{W}^{wu,g}_i$ and that $h$ carries this foliation to the foliation $\mc{W}^{wu,L}_i$. 

We have now shown that the needed conclusions hold for the conjugacy between $g$ and $L$. By composing the conjugacy between $f$ and $L$ with the conjugacy between $g$ and $L$, we finish.
\end{proof}

We are now ready for the proof of Theorem \ref{thm:non_linear_rigidity}. The main technical point in concluding is ensuring that strong unstable foliations are carried to strong unstable foliations by the conjugacy.

\begin{proof}[Proof of Theorem \ref{thm:non_linear_rigidity}.]

At this point from Lemmas \ref{lem:dominated_splitting_exists} and \ref{lem:weak_foliations_exist}, we see that there is a $C^1$ neighborhood of $L$, $\mc{U}$, in $\Diff^{2}(\mathbb{T}^d)$ such that all of the structures and estimates described in those lemmas hold. Let $f\in \mc{U}$ and $g$ be a $C^2$ Anosov diffeomorphism with the same periodic data as $f$. Then we must show that $f$ and $g$ are $C^{1+\text{H\"older}}$ conjugate. We already have from Lemma \ref{lem:weak_foliations_exist} that the weak flags of both $f$ and $g$ exist and are intertwined by the conjugacy.

We now use the periodic data assumption to show that the strong unstable foliations are also intertwined under the conjugacy. This step is an induction argument and carries over from~\cite{gogolev2008smooth} without any changes. It relies on Property $\mathcal A$ and the orientability of $f$-invariant foliations. 
This argument at its core studies the holonomies of a strong unstable foliation between leaves of a weak unstable foliation. One deduces that these holonomies are necessarily isometric with respect to the affine parameters on the $\mc{W}^{u,f}_i$ leaves. From this, one obtains a contradiction if $h$ does not carry the strong foliation of $g$ to the strong foliation of $f$.
In the process of induction one also obtains that the conjugacy is $C^{1+\textup{H\"older}}$ along all one-dimensional expanding foliations $\mc{W}_i^{u,f}$.

The last step concludes global regularity of the conjugacy and its inverse. This step is standard and is an inductive application of the Journ\'{e}'s Lemma~\cite{journe1988regularity}. First, Journ\'{e}'s Lemma needs to be applied inductively along the weak flag to obtain $C^{1+\textup{H\"older}}$ smoothness of the conjugacy and its inverse along the unstable foliation. Then the whole argument has to be repeated to obtain $C^{1+\textup{H\"older}}$ smoothness along the stable foliation. Finally, one more application of Journ\'{e}'s Lemma finishes the proof.
\end{proof}

The proof of Theorem \ref{thm:nonlinear_rigidity_3} follows along similar lines after a preparatory step dealing with the periodic data.

\begin{proof}[Proof of Thm. \ref{thm:nonlinear_rigidity_3}]
This follows from  \cite[Thm.~2]{gogolev2008differentiable}, which says that if two Anosov diffeomorphisms of $\mathbb{T}^3$ have the same periodic data, and each have a hyperbolic splitting of their $2$-dimensional unstable bundle, then the two are $C^{1+\text{H\"older}}$ conjugate. Thus the theorem will follow from Lemma \ref{lem:dominated_splitting_exists} and its sublemma \ref{lem:holder_control}, which gives uniform H\"older control on the stable and unstable bundles from just the periodic data. To conclude, one just needs to choose $\delta$ small enough so that the splitting of such a diffeomorphism is uniformly H\"older, and then take $\delta$ even smaller so that having $\delta$-narrow spectrum around $\lambda_1>\lambda_2>0>\lambda_3$ implies that there is a splitting of $E^u$ as required by \cite[Thm.~2]{gogolev2008differentiable}.
\end{proof}

\subsection{Rigidity of Anosov automorphisms}

In the case of Anosov automorphisms, we can weaken the restriction on the dimension of the Lyapunov subspaces of the automorphisms. In addition, we can also obtain higher regularity of the conjugacy.

We begin with showing global $C^{\infty}$ rigidity of Anosov automorphisms of $\mathbb{T}^3$. We now record the following observation.

\begin{observation}\label{obs:c1_conj}
    If $f$ is $C^1$ conjugate to $L$ by a $C^1$ conjugacy $h$, then it is $C^{\infty}$ conjugate to a map $\wt{f}$ that is $C^1$ close to $L$.  To obtain this, conjugate $f$ by $\wt{h}$, a $C^{\infty}$ diffeomorphism that is $C^1$ close to $h$, so that $f$ is $C^{\infty}$ conjugated to $\wt{h}f\wt{h}^{-1}$, which is  a $C^{\infty}$ Anosov diffeomorphism that is $C^1$ close to $L$. 
\end{observation}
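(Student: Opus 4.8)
The plan is simply to smooth the conjugacy. Write the hypothesis as $hfh^{-1}=L$ with $h\in\Diff^1(\TT^d)$ and $f\in\Diff^\infty(\TT^d)$. First I would approximate $h$ in the $C^1$ topology by a $C^\infty$ diffeomorphism $\wt h$. This is possible because $C^\infty$ maps are $C^1$-dense in $C^1(\TT^d,\TT^d)$ (mollify a representative in charts, or smooth the section $x\mapsto \exp_x^{-1}h(x)$), and because $\Diff^1(\TT^d)$ is $C^1$-open inside $C^1(\TT^d,\TT^d)$, so any sufficiently close smooth approximation is again a diffeomorphism. Fix such a $\wt h$ with $d_{C^1}(\wt h,h)$ as small as we wish, and set $\wt f=\wt h f\wt h^{-1}$.

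Next, $\wt f$ is $C^\infty$ and $C^\infty$-conjugate to $f$: indeed $\wt h$, $\wt h^{-1}$ and $f$ are all $C^\infty$ maps of $\TT^d$ and $\wt f=\wt h\circ f\circ\wt h^{-1}$, so $\wt h$ itself is a $C^\infty$ conjugacy between $f$ and $\wt f$. (This is the one place the full smoothness of $f$ is used; if $f$ were only $C^2$ one would land in $\Diff^2$ instead.) Then I would check that $\wt f$ is $C^1$-close to $L$. The cleanest route is to use that $\Diff^1(\TT^d)$ is a topological group for the $C^1$ topology, so inversion $g\mapsto g^{-1}$ and composition are $C^1$-continuous; hence $\wt h^{-1}$ is $C^1$-close to $h^{-1}$, and therefore $\wt h\circ f\circ\wt h^{-1}$ is $C^1$-close to $h\circ f\circ h^{-1}=L$, with $f$ held fixed. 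Quantitatively the chain rule expresses $D\wt f-DL$ in terms of $D\wt h-Dh$, $D\wt h^{-1}-Dh^{-1}$, $Df$, and the moduli of continuity of $Dh$ and $Df$, all controlled independently of $\wt h$, so the estimate survives even for $f$ merely $C^1$. Finally, since uniform hyperbolicity is a $C^1$-open condition and $L$ is Anosov, choosing $\wt h$ close enough to $h$ makes $\wt f$ Anosov, which completes the proof.

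The argument is essentially soft and I do not anticipate a genuine obstacle; the only point that needs a moment's care is the continuity of inversion in the $C^1$ topology, equivalently the quantitative dependence of $\wt h^{-1}$ on $\wt h$, which is standard and follows from applying the inverse function theorem uniformly over the compact manifold $\TT^d$.
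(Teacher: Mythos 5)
Your proposal is correct and follows exactly the route the paper takes (the paper's entire proof is the one-sentence sketch embedded in the statement of the observation: smooth $h$ to $\wt h$ in the $C^1$ topology and conjugate). The details you supply — $C^1$-density of smooth diffeomorphisms, $C^1$-continuity of composition and inversion on a compact manifold, and openness of the Anosov condition (or, even more directly, invariance of the Anosov property under the smooth conjugacy $\wt h$) — are the standard facts the authors are implicitly invoking.
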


We can now use the above proposition to conclude our main theorem.

\begin{proof}[Proof of Theorem \ref{thm:rigidity_dim_3}]

 It suffices to consider the case that $L$ has a two dimensional unstable bundle. Then either $L$ is conformal on its unstable bundle or its unstable bundle has hyperbolic splitting. We analyze each case separately. Let $f$ denote an Anosov diffeomorphism with the same periodic data as $L$ and $h$ a conjugacy between $f$ and $L$.

There are then two cases:

\begin{enumerate}
    \item 
 If the unstable bundle is conformal then we may conclude by work of Kalinin and Sadovskaya. Let $f$ denote the other Anosov diffeomorphism with the same periodic data as $L$. Then the restriction of $Df$ to the unstable bundle of $f$ preserves a conformal structure due to \cite[Thm.~1.3]{kalinin2010linear}. Then as a conjugacy between $f$ and $L$ intertwines the stable and unstable foliations of these maps. By applying the argument in \cite[Prop.~2.3]{gogolev2011local}, this implies that $h$ is $C^{1+\text{H\"older}}$ along the stable and unstable foliations of $f$ and hence $h$ is $C^{1+\text{H\"older}}$ by Journ\'{e}'s lemma \cite{journe1988regularity}. 
 The result \cite[Cor.~2.5]{kalinin2009anosov} implies immediately that a $C^1$ conjugacy between $L$ and a $C^1$ close Anosov diffeomorphism is $C^{\infty}$. Thus we may now conclude by Observation \ref{obs:c1_conj}.
 \item 
 If the unstable bundle isn't conformal, then Theorem \ref{thm:nonlinear_rigidity_3} implies that $h$ is $C^{1+\text{H\"older}}$.  That the conjugacy is then $C^{\infty}$ now follows from the main result of \cite{gogolev2017bootstrap}, which says that a $C^{\infty}$ Anosov diffeomorphism that is $C^1$ close to $L$ and is $C^1$ conjugate to $L$ is $C^{\infty}$ conjugate to $L$. By applying Observation \ref{obs:c1_conj} to the result of the second author, we are done.
\end{enumerate}
Having completed the analysis of the two cases, we may now conclude.
 \end{proof}

We now turn to the proof of Proposition \ref{cor:generic_anosov_automorphism}. We will not give a full proof as it follows the same lines as the proof of Theorem \ref{thm:non_linear_rigidity}. 

\begin{proof}[Sketch of proof of Prop.~\ref{cor:generic_anosov_automorphism}.]

The main difference from the setting of Theorem \ref{thm:non_linear_rigidity}, is that in this setting, the intermediate foliations $\mc{W}^{u,L}_i$ may be two dimensional. That these foliations exist and have the desired properties follows from the same development in \cite{dewitt2021local} that we have mentioned before, which is indifferent to the dimensions of the leaves of the foliation. However, the argument for showing that strong foliations intertwine with strong foliations is slightly different, but follows the same lines as the argument in \cite[Prop.~2.3]{gogolev2011local}, which also deals with two dimensional foliations. The part of the argument that shows that the strong foliation is carried to the strong foliation follows by studying the holonomies of the strong foliation $\mc{W}^{uu,f}_{i+1}$ between leaves of the $\mc{W}^{u,f}_i$ foliation. By using that the differential of $Df\vert_{E^{u,f}_i}$ and $DL\vert_{E^{u,L}_i}$ can both be shown to be conformal, one may deduce that the holonomy is isometric. This argument does not use any global properties of $f$ and hence may be applied in this case as well.
\end{proof}

We do not sketch the proof of Proposition \ref{cor:nilmanifolds} as the proof is \emph{mutatis mutandis} the same as the one in \cite{dewitt2021local} and is quite similar to the two we have already described.

\section{Remaining Questions}

In view of the aforementioned results. Let us summarize some of the remaining questions in the study of periodic data rigidity of hyperbolic toral automorphisms. As these questions are central in the rigidity of hyperbolic systems and are the motivation for much research, we hope others in the field, and particularly newcomers, will find these precise statements useful. The first question is perhaps the most fundamental.
\begin{quest}
Suppose that $L\in \GL(d,\Z)$ is an irreducible Anosov automorphism. If $f$ is a $C^1$ small $C^2$ perturbation of $L$ with the same periodic data as $L$ are $f$ and $L$ $C^{1+\text{H\"older}}$ conjugate?
\end{quest}
As mentioned above, our results show that a positive answer to the above question should provide a positive answer to the global question as well. As was mentioned in the introduction, Zhenqi Wang has recently announced that the answer to this question is ``Yes" if the perturbation is $C^{\infty}$ small.

The next main question asks whether conjugacies are typically smooth. The following is referred to as the ``bootstrapping problem."

\begin{quest}
Suppose that $L$ is an irreducible Anosov automorphism and that $f$ is a $C^{\infty}$ Anosov diffeomorphism that is $C^1$ conjugate to $L$ by a conjugacy $h$. Then is $h$ $C^\infty$?
\end{quest}
The result of \cite{kalinin2023smooth} shows that the answer is ``Yes" when $f$ is sufficiently $C^\infty$ close to $L$. 

In view of the disparity of regularity between what seems possible and what is known, it is interesting to ask whether all these results work for merely $C^1$ small perturbations or even globally. In fact, our Theorem \ref{thm:rigidity_dim_3} shows that in dimension 3 that perturbative restrictions are not necessary. However, it is entirely possible that in higher dimensions the situation is much more subtle. For example, the rigidity results of \cite{kalinin2023smooth} apply even to Anosov automorphisms that are \emph{not} periodic data rigid, such as those containing Jordan blocks.

\bibliographystyle{amsalpha}
\bibliography{biblio.bib}

\providecommand{\bysame}{\leavevmode\hbox to3em{\hrulefill}\thinspace}
\providecommand{\MR}{\relax\ifhmode\unskip\space\fi MR }
\providecommand{\MRhref}[2]{%
  \href{http://www.ams.org/mathscinet-getitem?mr=#1}{#2}
}
\providecommand{\href}[2]{#2}
\begin{thebibliography}{KSW23}

\bibitem[BG09]{bochi2009some}
Jairo Bochi and Nicolas Gourmelon, \emph{Some characterizations of domination},
  Math. Z. \textbf{263} (2009), no.~1, 221--231. \MR{2529495}

\bibitem[BG19]{bochi2019extremal}
Jairo Bochi and Eduardo Garibaldi, \emph{Extremal norms for fiber-bunched
  cocycles}, J. \'{E}c. polytech. Math. \textbf{6} (2019), 947--1004.
  \MR{4031530}

\bibitem[But17]{butler2017characterizing}
Clark Butler, \emph{Characterizing symmetric spaces by their {L}yapunov
  spectra}, arXiv preprint arXiv:1709.08066 (2017).

\bibitem[CCZ23]{chen2023uniformity}
Yexing Chen, Yongluo Cao, and Rui Zou, \emph{Uniformity of singular value
  exponents for typical cocycles}, Stoch. Dyn. \textbf{23} (2023), no.~4, Paper
  No. 2350036. \MR{4631647}

\bibitem[DeW21]{dewitt2021local}
Jonathan DeWitt, \emph{Local {L}yapunov spectrum rigidity of nilmanifold
  automorphisms}, J. Mod. Dyn. \textbf{17} (2021), 65--109. \MR{4251938}

\bibitem[DeW22]{dewitt2022periodic}
\bysame, \emph{Periodic data rigidity of {A}nosov automorphisms with {J}ordan
  blocks}, arXiv preprint arXiv:2210.16702 (2022).

\bibitem[dlL87]{delallave1987invariants}
R.~de~la Llave, \emph{Invariants for smooth conjugacy of hyperbolic dynamical
  systems. {II}}, Comm. Math. Phys. \textbf{109} (1987), no.~3, 369--378.
  \MR{882805}

\bibitem[dlL92]{delallave1992smooth}
\bysame, \emph{Smooth conjugacy and {S}-{R}-{B} measures for uniformly and
  non-uniformly hyperbolic systems}, Comm. Math. Phys. \textbf{150} (1992),
  no.~2, 289--320. \MR{1194019}

\bibitem[dlL02]{delallave2002rigidity}
Rafael de~la Llave, \emph{Rigidity of higher-dimensional conformal {A}nosov
  systems}, Ergodic Theory Dynam. Systems \textbf{22} (2002), no.~6,
  1845--1870. \MR{1944408}

\bibitem[Fis06]{fisher2006some}
Travis~W. Fisher, \emph{Some results in hyperbolic dynamics}, ProQuest LLC, Ann
  Arbor, MI, 2006, Thesis (Ph.D.)--The Pennsylvania State University.
  \MR{2709153}

\bibitem[GG08]{gogolev2008differentiable}
Andrey Gogolev and Misha Guysinsky, \emph{{$C^1$}-differentiable conjugacy of
  {A}nosov diffeomorphisms on three dimensional torus}, Discrete Contin. Dyn.
  Syst. \textbf{22} (2008), no.~1-2, 183--200. \MR{2410954}

\bibitem[GKS11]{gogolev2011local}
Andrey Gogolev, Boris Kalinin, and Victoria Sadovskaya, \emph{Local rigidity
  for {A}nosov automorphisms}, Math. Res. Lett. \textbf{18} (2011), no.~5,
  843--858, With an appendix by Rafael de la Llave. \MR{2875859}

\bibitem[Gog08]{gogolev2008smooth}
Andrey Gogolev, \emph{Smooth conjugacy of {A}nosov diffeomorphisms on
  higher-dimensional tori}, J. Mod. Dyn. \textbf{2} (2008), no.~4, 645--700.
  \MR{2449141}

\bibitem[Gog10]{gogolev2010diffeomorphisms}
\bysame, \emph{Diffeomorphisms {H}\"{o}lder conjugate to {A}nosov
  diffeomorphisms}, Ergodic Theory Dynam. Systems \textbf{30} (2010), no.~2,
  441--456. \MR{2599887}

\bibitem[Gog17]{gogolev2017bootstrap}
\bysame, \emph{Bootstrap for local rigidity of {A}nosov automorphisms on the
  3-torus}, Communications in Mathematical Physics \textbf{352} (2017),
  439--455.

\bibitem[GRH]{gogolev2023smooth}
Andrey Gogolev and Federico Rodriguez~Hertz, \emph{Smooth rigidity for very
  non-algebraic {A}nosov diffeomorphisms of codimension one}, Israel Journal of
  Mathematics, To appear.

\bibitem[Guy23]{guysinsky2023personal}
Misha Guysinsky, \emph{Personal communication}, 2023.

\bibitem[HP70]{hirsch1970stable}
Morris~W. Hirsch and Charles~C. Pugh, \emph{Stable manifolds and hyperbolic
  sets}, Global {A}nalysis ({P}roc. {S}ympos. {P}ure {M}ath., {V}ols. {XIV},
  {XV}, {XVI}, {B}erkeley, {C}alif., 1968), Proc. Sympos. Pure Math., vol.
  XIV-XVI, Amer. Math. Soc., Providence, RI, 1970, pp.~133--163. \MR{271991}

\bibitem[HPS77]{hirsch1977invariant}
M.~W. Hirsch, C.~C. Pugh, and M.~Shub, \emph{Invariant manifolds}, Lecture
  Notes in Mathematics, vol. Vol. 583, Springer-Verlag, Berlin-New York, 1977.
  \MR{501173}

\bibitem[Jou88]{journe1988regularity}
J.-L. Journ\'{e}, \emph{A regularity lemma for functions of several variables},
  Rev. Mat. Iberoamericana \textbf{4} (1988), no.~2, 187--193. \MR{1028737}

\bibitem[Kal11]{kalinin2011livsic}
Boris Kalinin, \emph{Liv\v{s}ic theorem for matrix cocycles}, Ann. of Math. (2)
  \textbf{173} (2011), no.~2, 1025--1042. \MR{2776369}

\bibitem[Kal20]{kalinin2020non}
\bysame, \emph{Non-stationary normal forms for contracting extensions}, arXiv
  preprint arXiv:2006.12662 (2020).

\bibitem[KH95]{katok1995introduction}
Anatole Katok and Boris Hasselblatt, \emph{Introduction to the modern theory of
  dynamical systems}, Encyclopedia of Mathematics and its Applications,
  vol.~54, Cambridge University Press, Cambridge, 1995, With a supplementary
  chapter by Katok and Leonardo Mendoza. \MR{1326374}

\bibitem[KP22]{kassel2022eigenvalue}
Fanny Kassel and Rafael Potrie, \emph{Eigenvalue gaps for hyperbolic groups and
  semigroups}, J. Mod. Dyn. \textbf{18} (2022), 161--208. \MR{4446005}

\bibitem[KS09]{kalinin2009anosov}
Boris Kalinin and Victoria Sadovskaya, \emph{On {A}nosov diffeomorphisms with
  asymptotically conformal periodic data}, Ergodic Theory Dynam. Systems
  \textbf{29} (2009), no.~1, 117--136. \MR{2470629}

\bibitem[KS10]{kalinin2010linear}
\bysame, \emph{Linear cocycles over hyperbolic systems and criteria of
  conformality}, Journal of Modern Dynamics \textbf{4} (2010), no.~3, 419--441.

\bibitem[KSW23]{kalinin2023smooth}
Boris Kalinin, Victoria Sadovskaya, and Zhenqi Wang, \emph{Smooth local
  rigidity for hyperbolic toral automorphisms}, Comm. Amer. Math. Soc.
  \textbf{3} (2023), 290--328. \MR{4607610}

\bibitem[MM87]{marco1987invariants}
J.~M. Marco and R.~Moriy\'{o}n, \emph{Invariants for smooth conjugacy of
  hyperbolic dynamical systems. {I}}, Comm. Math. Phys. \textbf{109} (1987),
  no.~4, 681--689. \MR{885566}

\bibitem[PSW97]{pugh1997holder}
Charles Pugh, Michael Shub, and Amie Wilkinson, \emph{H\"{o}lder foliations},
  Duke Math. J. \textbf{86} (1997), no.~3, 517--546. \MR{1432307}

\bibitem[Sad13]{sadovskaya2013cohomology}
Victoria Sadovskaya, \emph{Cohomology of {$\GL(2,\R)$}-valued cocycles over
  hyperbolic systems}, Discrete Contin. Dyn. Syst. \textbf{33} (2013), no.~5,
  2085--2104. \MR{3002744}

\bibitem[Ste79]{stewart1979note}
G.~W. Stewart, \emph{A note on the perturbation of singular values}, Linear
  Algebra Appl. \textbf{28} (1979), 213--216. \MR{549435}

\bibitem[VR20]{velozo2020characterization}
Renato Velozo~Ruiz, \emph{Characterization of uniform hyperbolicity for
  fibre-bunched cocycles}, Dyn. Syst. \textbf{35} (2020), no.~1, 124--139.
  \MR{4064736}

\bibitem[Wan]{wang_rigidity}
Zhenqi Wang, \emph{Periodic data and smooth rigidity for hyperbolic
  automorphisms on torus}.

\bibitem[Yoc04]{yoccoz2004some}
Jean-Christophe Yoccoz, \emph{Some questions and remarks about {${\rm
  SL}(2,\bold R)$} cocycles}, Modern dynamical systems and applications,
  Cambridge Univ. Press, Cambridge, 2004, pp.~447--458. \MR{2093316}

\end{thebibliography}

\end{document}